\newcommand{\doi}[1]{\url{http://dx.doi.org/#1}}
\newtheorem{remark}{Remark}[section]
\newtheorem{assumptions}{Assumptions}[section]
\DeclareMathOperator{\decay}{decay}
\DeclareMathOperator{\tail}{tail}
\DeclareMathOperator{\cost}{cost}
\DeclareMathOperator{\var}{var}
\DeclareMathOperator{\fix}{fix}
\def\seqv{\boldsymbol{\rm v}}
\def\bsa{\boldsymbol{a}}
\def\bsx{\boldsymbol{x}}
\def\bsy{\boldsymbol{y}}
\def\bsk{\boldsymbol{k}}
\def\bsl{\boldsymbol{l}}
\def\bsh{\boldsymbol{h}}
\def\bs0{\boldsymbol{0}}
\def\bsq{\boldsymbol{q}}
\def\bst{\boldsymbol{t}}
\def\bsgamma{\boldsymbol{\gamma}}
\def\bsqs12{\boldsymbol{q}_{[s_2] \setminus [s_1]}}
\def\Gs12{G^{s_2-s_1}_{b,m}}
\newcommand{\X}{{\mathfrak X}}
\newcommand{\U}{{\mathcal U}}
\newcommand{\N}{\mathbb{N}}
\newcommand{\E}{\mathbb{E}}
\newcommand{\R}{\mathbb{R}}
\newcommand{\EE}{\mathbb{E}}
\newcommand{\PP}{\mathbb{P}}
\def\nn{\mathbb{N}}
\def\zz{\mathbb{Z}}
\newcommand\Var{\textnormal{Var}}
\newcommand{\rr}{\mathbb{R}}
\newcommand{\bszero}{\boldsymbol{0}}
\title{
       Optimal randomized multilevel algorithms for infinite-dimensional integration on function spaces with ANOVA-type decomposition}
\author{Jan Baldeaux\thanks{Finance Discipline Group, University of Technology, Sydney ({\tt Jan.Baldeaux@uts.edu.au}).}
        \and Michael Gnewuch\thanks{School of Mathematics and Statistics,
University of New South Wales ({\tt mig@numerik.uni-kiel.de}).}}
\date{}
\begin{document}
\maketitle

\begin{abstract}
In this paper, we consider the infinite-dimensional integration problem on
weighted reproducing kernel Hilbert spaces with norms induced by an underlying function space decomposition of ANOVA-type. The weights model the relative importance of different groups of variables. We present new randomized multilevel algorithms to tackle this integration problem and prove upper bounds for their
randomized error.
Furthermore, we provide in this setting the first non-trivial lower error bounds for general randomized algorithms,
which, in particular, may be adaptive or non-linear. These lower bounds show that our multilevel algorithms are optimal. Our analysis refines and extends
the analysis provided in [F.~J.~Hickernell, T.~M\"uller-Gronbach, B.~Niu, K.~Ritter,
J.~Complexity 26 (2010), 229--254], and our error bounds improve substantially on the error bounds
presented there. As an illustrative example, we discuss the unanchored Sobolev space and employ randomized quasi-Monte Carlo multilevel algorithms based on scrambled polynomial lattice rules.
\end{abstract}

{\em Key words and phrases:\/ Multilevel Algorithms; ANOVA Decomposition; Randomized Algorithms; Numerical Integration; Reproducing Kernel Hilbert Spaces; Scrambled Polynomial Lattice Rules}
 \vspace*{2cm}

\section{Introduction} \label{introduction}

%

Motivated by applications arising e.g. in quantitativ finance or physics, see \cite{Gil08a, WW96}, there has recently been done a large amount of research investigating integrals over functions with apriori unlimited or even infinitely many variables. \emph{Multilevel algorithms}, \cite{Hei98, Gil08a}, have been successfully used to solve these problems. Furthermore, multilevel algorithms have been successfully combined with QMC methods, \cite{GiW09}, which turned
out to be more efficient than plain Monte Carlo (MC) or quasi-Monte Carlo (QMC) algorithms, respectively.

Researchers in information-based complexity started to study the complexity of the
infinite-dimensional integration problem on weighted reproducing kernel Hilbert
spaces of integrands with norms induced by function space decompositions of
\emph{anchored} or \emph{ANOVA-type} \cite{HW01, KSWW10a, HMGNR10, NHMR11, G10, PW11, B10}.
(For function space decompositions we refer to \cite{KSWW10b}.)
For many spaces of integrands good lower bounds for the deterministic worst-case
integration error have been proved and constructive upper bounds for different
error criteria have been established with the help of multilevel \cite{HMGNR10, NHMR11, G10, B10}
and so-called \emph{changing dimension algorithms} \cite{KSWW10a, PW11}.
For some settings these bounds are sharp.
Nevertheless, the randomized setting and the case of  function spaces with norms induced by ANOVA-type decompositions are so far not well enough
understood; see also the comments in \cite{HMGNR10} or, for integration in the
randomized setting in general, in \cite[p.~487]{NW10}. The main reason for this is
that the randomized setting and the ANOVA setting are technically demanding and
more difficult to analyze than the deterministic worst-case setting and the anchored setting.
But the former two settings are particularly
interesting and very important. The (deterministic) worst-case error is often unnecessarily pessimistic and furthermore
suitably randomized algorithms can achieve higher convergence rates and additionally
provide  statistical error estimates. ANOVA decompositions have been used to explain the success of QMC methods for financial applications,
see e.g. \cite{PT95, CMO97, TW98}: If the effective dimension, see \cite{CMO97}, of the integration problem is small, i.e. the variance is concentrated in the lower-order ANOVA terms,
QMC methods can be expected to perform well. Furthermore, in \cite{LO06} and \cite{GKS10, GKS11} it was shown that lower order ANOVA terms exhibit more smoothness than the corresponding function itself.

In \cite{HMGNR10} the convergence rates of randomized multilevel algorithms for
infinite-dimen\-sional integration on Hilbert spaces with product weights
are analyzed. But as the authors admit in their paper, in the ANOVA case their analysis has unfortunately some limitations.
An undesirable consequence of this shortcoming is that they are only able
to study a very restricted class of multilevel algorithms.
Non-trivial lower bounds for the errors of randomized multilevel algorithms
are not provided in \cite{HMGNR10}.

In this paper we refine the analysis from \cite{HMGNR10}
and extend it to other kinds of weights.
As a result we are able to study new multilevel algorithms
and to establish good upper error bounds for their performance.
In the case of product weights our upper error bounds improve substantially
on the ones given in \cite{HMGNR10} and \cite{B10}. A key indegredient
for our analysis of multilevel algorithms is the ``ANOVA invariance lemma'',
Lemma \ref{ANOVA}.
We also provide the first non-trivial lower bounds for the $N$th minimal errors of
randomized multilevel algorithms (or, to be more precise, of general randomized
algorithms in the \emph{variable subspace sampling model} introduced in
\cite{CDMR09}; for lower error bounds for the  $N$th minimal errors of deterministic and randomized algorithms
in the case of anchored decompositions in the former model and the cost model introduced in \cite{KSWW10a} we refer to the new preprints \cite{DG12, Gne12}). These lower bounds show that our constructive
upper bounds are tight for both types of weights considered, namely finite-intersection weights and product weights. (Similar optimal results for multilevel algorithms are
achieved  in \cite{DG12}
in the deterministic worst-case setting for norms based on anchored function space decompositions.)  Furthermore, as done in \cite{HMGNR10} for product weights, we provide for finite-intersection weights sharp upper and lower error bounds for single-level algorithms (or, to be more precise, upper bounds for specific
and lower bounds for general randomized algorithms in the \emph{fixed subspace
sampling model} defined in \cite{CDMR09}). Our analysis tools can also be used to investigate the convergence rates of
other randomized algorithms, as, e.g., the randomized changing dimension algorithms
from \cite{PW11},
in the ANOVA setting.

The paper is organized as follows: 
In Section \ref{Preliminaries}, we recall preliminaries, but also provide new lemmas which are important for our error analysis.
In Section \ref{LOWBOU} we provide lower bounds for the randomized errors
of general randomized algorithms and general weights. We specify these bounds
for finite-intersection and product weights.
In Section \ref{secmultilevelalg} we present our multilevel algorithms for general
weights and provide concrete error bounds for finite-intersection weights in
Theorem \ref{Error-FIW} and for product weights in Theorem \ref{theoprodweightsml}. In Section \ref{secexample} we consider a concrete space of functions of
infinitely many variables 
and show that multilevel algorithms based on \emph{scrambled polynomial lattice rules} are essentially optimal
for finite-intersection and product weights.

\section{Preliminaries} \label{Preliminaries}

Let us make some remarks on notation: For $n\in\N$ we denote by $[n]$ the
set $\{1,2,\ldots,n\}$. For a finite set $u$ we denote its cardinality by
$|u|$. We use the common Landau $O$-notation. For two functions $f$ and $g$
we write occasionally $f=\Omega(g)$ for $g= O(f)$, and $f=\Theta(g)$ if $f=\Omega(g)$ and $f=O(g)$ holds.
If we consider a reproducing kernel $K$, then we always denote the
corresponding reproducing kernel Hilbert space by $H(K)$ and its norm unit
ball by $B(K)$. The norm and the scalar product of $H(K)$ are denoted by
$\|\cdot\|_K$ and $\langle \cdot, \cdot \rangle_K$, respectively.
Our standard reference for reproducing kernel Hilbert spaces
is \cite{Aro50}.

\subsection{The ANOVA decomposition}\label{SECT2.1}

In this section, we recall the ANOVA decomposition of $L^2$-functions; the acronym ``ANOVA'' stands
for ``Analysis of Variance''.
Let $(\Omega, \Sigma, \PP)$ be a probability space, and denote its $d$-fold product space by $(\Omega^d, \Sigma^d, \PP^d)$.
The ANOVA decomposition of an $L^2$-function $f:\Omega^d \to \R$ is
\begin{equation} \label{eqANOVAdecomp}
f(\bsx) = \sum_{u \subseteq [d]} f_u(\bsx) \, ,
\end{equation}
where $f_u$ denotes the ANOVA-term corresponding to the set $u$.
For $u\subseteq [d]$ and $\bsx\in \Omega^d$ let $\bsx_u := (x_j)_{j\in u}
\in \Omega^u$.
For $\bsx_u\in \Omega^u$ and $\omega\in \Omega^{[d]\setminus u}$ let
$(\bsx_u,\omega)\in \Omega^d$ be the vector whose $j$th component is
$x_j$ if $j\in u$ and $\omega_j$ otherwise.
The ANOVA-term $f_u$ can be computed recursively via
\begin{displaymath}
f_u(\bsx) = \int_{\Omega^{[d] \setminus u}} f(\bsx_u,\omega) \,\PP^{[d]\setminus u} (d\omega) - \sum_{v \subsetneq u} f_v(\bsx) \, , \textrm{ where } f_{\emptyset} = \int_{\Omega^d} f(\omega) \,\PP^d(d \omega).
\end{displaymath}
Furthermore, it can be shown via induction over $|u|$ that
\begin{equation} \label{eqANOVAdecompprop}
\int_{\Omega} f_u(\bsx) \,\PP(d x_j) = 0
\hspace{2ex}\text{for all $j \in u$.}
\end{equation}
The important feature of the ANOVA decomposition is
\begin{equation}
 \label{anova}
\Var(f) = \sum_{u\subseteq [d]} \Var(f_u).
\end{equation}

Let $(D,\Sigma',\rho)$ be another probability space.
The new randomized algorithms for infinite-dimensional integration we
present here, rely on random quadratures that use $n$ (deterministic) real coefficients $w_i$ and $n$ randomly chosen quadrature points
$\bsx^{(1)}(\omega),\ldots, \bsx^{(n)}(\omega)$ in $D^d$, i.e., that have the
form
\begin{displaymath}
Q_n(\omega,f) = \sum^n_{i=1} w_i f(\bsx^{(i)} (\omega))
\end{displaymath}
for $f\in L^2(D^d, \rho^d)$, $\omega \in \Omega^d$. We assume that
for fixed $f$  the function $\omega \mapsto Q_n(\omega,f)$ is square integrable.
The next lemma is crucial for the proof of our upper error bounds for multilevel algorithms; it says that under a certain condition the $u$th ANOVA-term
of the $L^2(\Omega^d,\PP^d)$-function  $Q_n(\cdot,f)$ is equal to $Q_n$
applied to the $u$th ANOVA-term of the $L^2(D^d, \rho^d)$-function $f$.
We denote the ANOVA-terms of $Q_n(\cdot,f)$, regarded as a function on $\Omega^d$, by $\left[ Q_n(\cdot, f) \right]_u$, $u\subseteq [d]$.

\begin{lemma}[ANOVA Invariance Lemma]
\label{ANOVA}
Let $(\Omega, \Sigma, \PP)$, $(D,\Sigma', \rho)$ be probability spaces.
Let $d\in\N$, and let $\mathcal{V}$ be a subset of the power set of $[d]$.
Assume that $Q_n = Q_{[d],n}$, given by
\begin{equation}
\label{alg-form}
Q_n(\omega, f) = \sum^n_{i=1} w_i f(\bsx^{(i)}(\omega)),
\hspace{3ex}\omega\in\Omega^d,\, w_1,\ldots, w_n\in\R, \,f\in L^2(D^d, \rho^d),
\end{equation}
is a randomized linear algorithm which
satisfies the following condition:
\begin{itemize}
\item[(*)] For all $v\in \mathcal{V}$ the random points $\bsx^{(i)}_v = (x^{(i)}_j(\omega))_{j\in v} \in D^v$,
$i=1,\ldots,n$,  are of the form
$\bsx^{(i)}_v = (x^{(i)}_j(\omega_j))_{j\in v}$, and
the random variables $x^{(i)}_j$
are distributed according to the law $\rho$.
\end{itemize}
Then we have for each $f\in L^2(D^d, \rho^d)$ whose ANOVA terms $f_v$ vanish if
$v\notin \mathcal{V}$ that
\begin{equation}
\label{vertauschungsrelation}
[Q_n(\cdot, f)]_u = Q_n(\cdot, f_u)
\hspace{3ex}\text{for all $u\subseteq [d]$.}
\end{equation}
\end{lemma}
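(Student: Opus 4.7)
The plan is to exploit the linearity of both the algorithm $Q_n$ and the ANOVA projections, and then to invoke the uniqueness of the ANOVA decomposition. Since $\mathcal{V} \subseteq 2^{[d]}$ is finite and $f_v = 0$ for $v \notin \mathcal{V}$, we have $f = \sum_{v \in \mathcal{V}} f_v$, and by linearity of $Q_n$ in its second argument together with linearity of the $u$th ANOVA projection on $L^2(\Omega^d, \PP^d)$, we obtain
\begin{displaymath}
[Q_n(\cdot, f)]_u \;=\; \sum_{v \in \mathcal{V}} [Q_n(\cdot, f_v)]_u .
\end{displaymath}
Thus it suffices to prove the reduced claim: for every $v \in \mathcal{V}$ and every $u \subseteq [d]$, $[Q_n(\cdot, f_v)]_u$ equals $Q_n(\cdot, f_v)$ if $u = v$ and $0$ otherwise. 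Once this is done, summing over $v$ recovers $Q_n(\cdot, f_u)$ both when $u \in \mathcal{V}$ and when $u \notin \mathcal{V}$ (in the latter case trivially, since then $f_u = 0$).

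For the reduced claim I will verify that $g := Q_n(\cdot, f_v)$ has the two characterising properties of a pure $v$-order ANOVA term. First, by condition~(*), for each $i$ and each $j \in v$ the component $x_j^{(i)}(\omega)$ depends only on $\omega_j$, and since $f_v$ depends only on the coordinates indexed by $v$, the value $f_v(\bsx^{(i)}(\omega)) = f_v\bigl((x_j^{(i)}(\omega_j))_{j \in v}\bigr)$ depends only on $\omega_v$; hence so does $g$. Second, for every $j \in v$ and every $i$, the law of $x_j^{(i)}$ under $\PP$ is $\rho$, so
\begin{displaymath}
\int_{\Omega} f_v\bigl((x_k^{(i)}(\omega_k))_{k \in v}\bigr) \PP(d\omega_j) \;=\; \int_{D} f_v\bigl(\ldots, y, \ldots\bigr) \rho(dy) \;=\; 0,
\end{displaymath}
where the last equality is the ANOVA zero-mean property \eqref{eqANOVAdecompprop} applied in the $j$th coordinate of $f_v$. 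Multiplying by $w_i$ and summing over $i$ gives $\int_{\Omega} g \,\PP(d\omega_j) = 0$ for all $j \in v$.

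To conclude, I appeal to the uniqueness of the ANOVA decomposition via the standard representation $[g]_u = \bigl(\prod_{j \in u}(I - E_j)\bigr)\bigl(\prod_{j \notin u} E_j\bigr) g$, where $E_j$ integrates out $\omega_j$ against $\PP$. Property~(i) forces $E_j g = g$ for $j \notin v$, while property~(ii) forces $E_j g = 0$ for $j \in v$; inspection of the product then shows it vanishes unless $v \subseteq u$ (from the $E_j$ factors with $j \notin u$) and $u \subseteq v$ (from the $(I - E_j)$ factors with $j \in u \setminus v$), i.e.\ unless $u = v$, in which case the product equals $g$ itself. The main technical point to watch is property~(ii): one must justify commuting the finite sum defining $Q_n$ with the one-dimensional integration and invoke the change-of-variables formula via the distribution of $x_j^{(i)}$. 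This is routine given the square-integrability assumption on $\omega \mapsto Q_n(\omega,f)$, so no deeper obstacle is expected; the lemma is essentially the bookkeeping statement that sampling each coordinate independently with the correct marginal law makes $Q_n$ commute with ANOVA projections.
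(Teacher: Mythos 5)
Your proof is correct, but it takes a genuinely different route from the paper's. The paper proves \eqref{vertauschungsrelation} by induction on $|u|$, applying the recursive definition of the ANOVA term directly to the function $\omega\mapsto Q_n(\omega,f)$ and observing in the inductive step that $\int_{\Omega^{[d]\setminus v}} f_u(\bsx^{(i)}(\sigma_v,\omega))\,\PP^{[d]\setminus v}({\rm d}\omega)$ vanishes unless $u\subseteq v$. You instead decompose $f=\sum_{v\in\mathcal V}f_v$ first and show that each $g=Q_n(\cdot,f_v)$ is itself a pure order-$v$ ANOVA term on $\Omega^d$, by checking the two characterising properties (dependence only on $\omega_v$, via the coordinate structure in (*) and the fact that $f_v$ depends only on the $v$-coordinates; and $E_jg=0$ for $j\in v$, via the law of $x_j^{(i)}$ and \eqref{eqANOVAdecompprop}), then read off $[g]_u$ from the projection formula $[g]_u=\bigl(\prod_{j\in u}(I-E_j)\bigr)\bigl(\prod_{j\notin u}E_j\bigr)g$. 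This is conceptually cleaner --- it makes explicit that the lemma says $Q_n$ maps order-$v$ terms to order-$v$ terms --- and it replaces the induction by an algebraic evaluation of commuting projections. The only ingredient you use that the paper does not state is the product formula for the ANOVA projection; it is equivalent to the paper's recursive definition by M\"obius inversion ($\prod_{j\in u}(I-E_j)\prod_{j\notin u}E_j=\sum_{v\subseteq u}(-1)^{|u|-|v|}P_v$ with $P_vf=\int_{\Omega^{[d]\setminus v}}f(\cdot_v,\omega)\,\PP^{[d]\setminus v}({\rm d}\omega)$), so this is standard and harmless. Your attention to the square-integrability of each $Q_n(\cdot,f_v)$ (needed so that the ANOVA projections can be applied term by term after the initial linearity step) is also correctly handled, since under (*) the point $\bsx^{(i)}_v$ has law $\rho^v$, whence $\EE\bigl[f_v(\bsx^{(i)}(\cdot))^2\bigr]=\|f_v\|_{L^2}^2<\infty$.
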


\begin{proof}
We prove (\ref{vertauschungsrelation}) by induction on $|u|$. So let first $u=\emptyset$. Then, due to (\ref{alg-form}), (\ref{eqANOVAdecomp}), and condition (*),
\begin{equation*}
\begin{split}
[Q(\cdot,f)]_\emptyset &= \int_{\Omega^d} Q(\omega, f)\,\PP^d ({\rm d}\omega)
= \sum^n_{i=1} w_i \int_{\Omega^d} f(\bsx^{(i)}(\omega)) \,\PP^d ({\rm d}\omega)\\
&= \sum^n_{i=1} w_i \sum_{u\subseteq [d]} \int_{\Omega^d} f_u(\bsx^{(i)}(\omega)) \,\PP^d ({\rm d}\omega)
= \sum^n_{i=1} w_i \sum_{u\subseteq [d]} \int_{D^d} f_u(\bsx) \,{\rm d}\rho^d(\bsx)\\
&= \sum^n_{i=1} w_i \int_{D^d} f(\bsx)\,\rho^d(\bsx) = Q(\cdot, f_\emptyset).
\end{split}
\end{equation*}
Let now $\emptyset \neq v \subseteq [d]$, and let us assume that
(\ref{vertauschungsrelation}) holds for all $u$ with $|u| < |v|$.
Then we have for $\sigma \in \Omega^d$
\begin{equation*}
[Q(\cdot,f)]_v(\sigma) = \int_{\Omega^{[d]\setminus v}}
Q \big( (\sigma_v,\omega),
f \big) \,\PP^{[d]\setminus v}({\rm d}\omega)
- \sum_{u\subsetneq v} [Q(\cdot,f)]_u(\sigma).
\end{equation*}
Now
\begin{equation*}
\begin{split}
\int_{\Omega^{[d]\setminus v}}
Q \big( (\sigma_v,\omega),
f \big) \,\PP^{[d]\setminus v}({\rm d}\omega)
&= \sum^n_{i=1} w_i \int_{\Omega^{[d]\setminus v}} f(\bsx^{(i)}(\sigma_v,\omega))
\,\PP^{[d]\setminus v}({\rm d}\omega)\\
&= \sum^n_{i=1} w_i \sum_{u\subseteq [d]} \int_{\Omega^{[d]\setminus v}} f_u(\bsx^{(i)}(\sigma_v,\omega))
\,\PP^{[d]\setminus v}({\rm d}\omega).
\end{split}
\end{equation*}
Notice that the last integral is zero if $u$ is not a subset of $v$,
due to condition (*) and (\ref{eqANOVAdecompprop}) for $u\in \mathcal{V}$ and due to
$f_u=0$ for $u\notin \mathcal{V}$. Since the ANOVA terms $f_u$, $u\subseteq v$, depend
only on the variables in $v$, we thus get
\begin{equation*}
\begin{split}
\int_{\Omega^{[d]\setminus v}}
Q \big( (\sigma_v,\omega),
f \big) \,\PP^{[d]\setminus v}({\rm d}\omega)
&= \sum_{i=1}^n w_i \sum_{u\subseteq v} f_u(\bsx^{(i)}(\sigma))
= \sum_{u\subseteq v} Q(\sigma, f_u)\\
&= Q(\sigma,f_v) + \sum_{u\subsetneq v} [Q(\cdot,f)]_u(\sigma),
\end{split}
\end{equation*}
where the last step uses the induction hypothesis. Hence
$[Q(\cdot,f)]_v(\sigma) = Q(\sigma, f_v)$, and the proof is complete.
\end{proof}

\begin{remark}\label{Unbiased}
In the case where the set $\mathcal{V}$ in Lemma \ref{ANOVA} is the whole
power set of $[d]$, we may say that $Q_n$ is invariant under the ANOVA decomposition.
Note that for general subsets $\mathcal{V}$ of the power
set of $[d]$ and $f\in L^2(D^d,\rho^d)$ with $f_v=0$ for all
$v\notin \mathcal{V}$ condition (*) of Lemma \ref{ANOVA} implies that
$Q_n(\cdot,f)$ is square integrable on
$\Omega^d$ and, if additionally
\begin{equation}\label{summe=1}
\sum_{i=1}^n w_i = 1
\end{equation}
holds, an unbiased estimator of
$\int_{D^d}f(\bsx) \,\rho^d({\rm d}\bsx)$.
\end{remark}

\subsection{Classes of weights}

Let
\begin{equation*}
\U := \{u\subset \nn \,|\, |u|<\infty\},
\end{equation*}
and let $\bsgamma=(\gamma_u)_{u\in \U}$ be a sequence of non-negative weights. The weights $\bsgamma$ are called \emph{product weights}, \cite{SW98}, if there exists a sequence of non-negative numbers $\gamma_1 \ge \gamma_2 \ge \cdots$ such
that $\gamma_u = \prod_{j\in u} \gamma_j$ for all $u \in \U $.
The weights $\bsgamma$ are called \emph{finite-order weights}, \cite{DSWW06}, of order $\omega$
if $\gamma_u = 0 \hspace{2ex}\text{for all $u\in\U$ with $|u|>\omega$.}$
We are particularly interested in some subclass of finite-order
weights. We restate
Definition 3.5 from \cite{G10}.

\begin{definition}
\label{def-fiw}
Let $\rho \in \nn$.
Finite-order weights $(\gamma_{u})_{u\in \U}$ are
called \emph{finite-intersection weights} with \emph{intersection
degree} at most $\rho$ if we have
\begin{equation}
\label{fiw}
|\{v\in\U \, | \, \gamma_v >0 \,,\, u\cap v \neq \emptyset \}| \le 1+\rho
\hspace{2ex}\text{for all $u\in\U$ with $\gamma_u >0$.}
\end{equation}
\end{definition}
Note that for finite-order weights of order $\omega$ condition (\ref{fiw}) is
equivalent to the following
condition: There exists an $\eta\in\nn$ such that
\begin{equation}
\label{cond}
|\{ u\in\U \,|\, \gamma_u >0 \,,\, k\in u \}| \le \eta
\hspace{2ex}\text{for all $k\in\nn$.}
\end{equation}
Indeed, if (\ref{fiw}) is satisfied, then (\ref{cond}) holds with
$\eta \le 1+\rho$, and if (\ref{cond}) is satisfied, then
(\ref{fiw}) holds with $\rho \le (\eta-1)\omega$. A subclass of the finite-intersection weights are the
\emph{finite-diameter weights} proposed by Creutzig, see, e.g., \cite{G10,NW08}. Let us restate Lemma 3.10 from \cite{G10}, which will be essential
for our analysis of finite-intersection weights.
\begin{lemma}
\label{phi}
Let $(\gamma_{u})_{u\in\U}$ be finite-intersection weights of
finite order $\omega$. Let $\eta\in\N$ be such that (\ref{cond})
is satisfied. Then there exists a
mapping $\phi: \N \to [\eta(\omega-1) + 1]$ such that for all $u\in\U$
with $\gamma_u>0$
the restriction $\phi|_{u}$ is injective.
\end{lemma}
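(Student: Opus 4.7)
The plan is to recast the statement as a graph coloring problem. I would define a graph $G = (\N, E)$ whose vertex set is $\N$ and whose edge set consists of all pairs $\{k,l\}$ with $k \neq l$ for which there exists some $u \in \U$ with $\gamma_u > 0$ and $\{k,l\} \subseteq u$. Then the requirement that $\phi|_u$ be injective for every $u$ with $\gamma_u > 0$ is exactly the requirement that $\phi$ be a proper vertex coloring of $G$: any two distinct elements of such a set $u$ are joined by an edge, so they must receive distinct colors, and conversely if adjacent vertices get distinct colors then $\phi$ is injective on each $u$ with $\gamma_u > 0$.

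Next I would bound the maximum degree $\Delta(G)$. Fix $k \in \N$. Every neighbor $l$ of $k$ lies in some $u \in \U$ with $\gamma_u > 0$ and $k \in u$. By hypothesis (\ref{cond}), there are at most $\eta$ such sets $u$, and because the weights are of finite order $\omega$, each of these sets satisfies $|u \setminus \{k\}| \le \omega - 1$. Taking the union over the (at most $\eta$) sets containing $k$ shows that $k$ has at most $\eta(\omega - 1)$ neighbors, so $\Delta(G) \le \eta(\omega - 1)$.

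Finally I would construct $\phi$ by a greedy coloring procedure on the enumeration $1, 2, 3, \ldots$ of $\N$. At stage $k$, all previously colored vertices among the neighbors of $k$ number at most $\eta(\omega-1)$, so at least one element of $[\eta(\omega-1) + 1]$ is not yet used by any colored neighbor, and I assign the smallest such color to $k$. This produces a well-defined $\phi : \N \to [\eta(\omega-1) + 1]$ which is a proper coloring of $G$, and hence injective on every $u \in \U$ with $\gamma_u > 0$.

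I do not expect a genuine obstacle here: the argument is a transparent translation to graph coloring plus the standard greedy bound $\chi(G) \le \Delta(G) + 1$. The only point requiring a small comment is that $G$ is countably infinite, but since the greedy construction uses only the finitely many neighbors appearing earlier in the enumeration, the procedure terminates at every vertex and produces the desired map on all of $\N$.
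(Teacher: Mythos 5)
Your proof is correct. Note that the paper itself gives no proof of this lemma --- it is quoted verbatim as Lemma~3.10 of \cite{G10} --- but your argument (translate the injectivity requirement into a proper coloring of the ``conflict graph'' on $\N$, bound the maximum degree by $\eta(\omega-1)$ using (\ref{cond}) together with the order bound $|u|\le\omega$, and color greedily along the enumeration of $\N$) is exactly the standard greedy argument used in that reference, and all steps, including the degree bound and the observation that each greedy step only consults finitely many earlier vertices, check out.
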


\subsection{Function Spaces} \label{subsecfunspace}

Let $D\subseteq \rr$, $\rho$ a probability measure on $D$, and
$\mu:=\otimes_{n\in\nn}\, \rho$ the product probability measure on
$D^\nn$.
Unless stated otherwise, we denote by $u$, $v$, and $w$ finite subsets
of $\N$, i.e., $u,v,w\in \U$. In many formulas we will not state this
explicitly, to make our notation not too cumbersome.
Let $(\gamma_u)_{u\in \U}$ be a sequence of
non-negative weights.

In this paper we make essentially the same assumptions as in
\cite[Sect.~2]{HMGNR10}.

\begin{assumptions}
 We assume that
\begin{itemize}
\item[{\rm (A 1)}] $k\neq 0$ is a measurable reproducing kernel on $D\times D$
which satisfies
\item[{\rm (A 2)}] $H(k)\cap H(1) = \{0\}$ as well as
\item[{\rm (A 3)}] $M:= \int_D k(x,x) \rho({\rm d}x) < \infty$.
\item[{\rm (A 4)}] $\gamma_\emptyset = 1$ and
\begin{equation}
\label{summable}
\sum_{u\in \U} \gamma_u M^{|u|} <\infty.
\end{equation}
\end{itemize}
\end{assumptions}
It is easily verified that for product weights and finite-order weights condition (\ref{summable}) can be replaced by the equivalent condition
$\sum_{u\in \U} \gamma_u < \infty$.

For $u\in \U$ we put $k_u(\bsx,\bsy) := \prod_{j\in u} k(x_j,y_j)$, for all $\bsx, \bsy\in D^\nn$.
In particular, $k_\emptyset(\bsx,\bsy) = 1$.
We define $H_u := H(k_u)$, i.e., $H_u$ is the reproducing kernel
Hilbert space with kernel $k_u$. The following lemma stems from \cite{HMGNR10}.
\begin{lemma}
\label{restrict}
Let $\bsx,\bsy\in D^{\N}$ and $f\in H_u$. If $\bsx_u = \bsy_u$,
then $f(\bsx) = f(\bsy)$.
\end{lemma}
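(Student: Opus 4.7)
The plan is to exploit the reproducing property of the kernel $k_u$ together with the observation that $k_u$ depends on its arguments only through their restrictions to $u$.

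First I would note that, by definition,
\begin{equation*}
k_u(\bsz,\bsx) = \prod_{j\in u} k(z_j,x_j)
\end{equation*}
depends on $\bsx$ only through the coordinates $(x_j)_{j\in u} = \bsx_u$. Consequently, if $\bsx_u = \bsy_u$, then for every $\bsz \in D^{\N}$ we have $k_u(\bsz,\bsx) = k_u(\bsz,\bsy)$, so the two sections $k_u(\cdot,\bsx)$ and $k_u(\cdot,\bsy)$ coincide as functions on $D^{\N}$, and hence as elements of $H_u$.

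Next I would apply the reproducing property. Since $f \in H_u = H(k_u)$,
\begin{equation*}
f(\bsx) = \langle f, k_u(\cdot,\bsx)\rangle_{k_u}
= \langle f, k_u(\cdot,\bsy)\rangle_{k_u} = f(\bsy),
\end{equation*}
which is exactly the claim.

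I do not expect any genuine obstacle here; the statement is essentially a direct unpacking of the definition of a reproducing kernel Hilbert space once one observes that $k_u$ factors through the projection $\bsx \mapsto \bsx_u$. The only point worth mentioning for the reader is that, although elements of $H_u$ are formally functions on $D^{\N}$, this argument shows they descend to functions of the finitely many variables indexed by $u$, which is how this space is implicitly used throughout the paper.
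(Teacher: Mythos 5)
Your proof is correct and is the standard argument: since $k_u(\cdot,\bsx)=k_u(\cdot,\bsy)$ whenever $\bsx_u=\bsy_u$, the reproducing property immediately gives $f(\bsx)=\langle f,k_u(\cdot,\bsx)\rangle_{k_u}=\langle f,k_u(\cdot,\bsy)\rangle_{k_u}=f(\bsy)$. The paper does not prove this lemma itself but cites it from Hickernell, M\"uller-Gronbach, Niu, and Ritter, where the argument is essentially the one you give.
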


Given $v\in\mathcal{U}$ we define the \emph{weighted kernel} $K_v(\bsx,\bsy) := \sum_{u\subseteq v} \gamma_u k_u(\bsx,\bsy)$, for $\bsx,\bsy\in D^{\nn}$.
For the next lemma see \cite[Lemma~3]{HW01} or \cite[I, \S~6]{Aro50}.
\begin{lemma}\label{Lemma5}
The reproducing kernel Hilbert space $H(K_v)$ consists of all
functions $f = \sum_{u\subseteq v} f_u$, for $f_u\in H_u$.
Furthermore, $\|f\|^2_{K_v} = \sum_{u\subseteq v} \gamma^{-1}_u \|f_u\|^2_{k_u}.$
\end{lemma}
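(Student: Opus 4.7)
The plan is to apply Aronszajn's sum-of-kernels theorem to the finite decomposition $K_v = \sum_{u\subseteq v}\gamma_u k_u$ and then to upgrade the resulting infimum-type norm formula to the additive identity stated. This is a classical result, cited above as \cite[Lemma~3]{HW01}, and I outline how I would verify it in the present setting.

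Invoking iteratively the sum-of-kernels theorem, and using the identity $H(\gamma_u k_u)=H_u$ with $\|\cdot\|_{\gamma_u k_u}^2 = \gamma_u^{-1}\|\cdot\|_{k_u}^2$ valid for $\gamma_u>0$, one obtains (ignoring summands with $\gamma_u=0$) that $H(K_v) = \sum_{u\subseteq v}H_u$ as vector spaces, equipped with squared norm
\begin{equation*}
\|f\|_{K_v}^2 = \inf\Big\{\sum_{u\subseteq v}\gamma_u^{-1}\|f_u\|_{k_u}^2 : f = \sum_{u\subseteq v} f_u,\ f_u\in H_u\Big\}.
\end{equation*}
This already establishes the first claim of the lemma. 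To upgrade the infimum to an equality it suffices to show that such a decomposition of $f$ is \emph{unique}, equivalently that $\sum_{u\subseteq v}f_u = 0$ with $f_u\in H_u$ forces every $f_u=0$.

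I would prove uniqueness by induction on $|v|$. The base case $v=\emptyset$ is immediate, since $H_\emptyset$ is one-dimensional. For the inductive step, pick $j_0\in v$ and group the subsets of $v$ according to whether they contain $j_0$. By Lemma~\ref{restrict}, $f_u$ with $j_0\notin u$ is independent of $x_{j_0}$, while for $j_0\in u$ one has $H_u\cong H(k)\otimes H_{u\setminus\{j_0\}}$ as RKHSs. The crucial fact to prove is that any element of the latter tensor product which is independent of $x_{j_0}$ must vanish identically: this is precisely where assumption (A~2) enters, as expanding the element in an orthonormal basis of $H(k)$ in the $j_0$-th factor forces every ``coefficient function'' to be constant, hence to lie in $H(k)\cap H(1)=\{0\}$. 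Splitting the identity $\sum_{u\subseteq v}f_u=0$ into its $x_{j_0}$-independent and $x_{j_0}$-dependent parts then produces one sub-identity over $v\setminus\{j_0\}$ handled by the induction hypothesis, and another handled by the tensor-product fact.

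The main obstacle is making the tensor-product step rigorous: general elements of $H(k)\otimes H_{u\setminus\{j_0\}}$ are only norm-limits of finite sums $\sum_i g_i\otimes h_i$, so extracting a well-defined ``$j_0$-th factor'' and concluding that it must be constant requires working with Parseval's identity in a tensor-product orthonormal basis and then passing to the limit, rather than manipulating an explicit finite decomposition. Once uniqueness is in place, the infimum collapses to the claimed additive squared-norm formula, completing the proof.
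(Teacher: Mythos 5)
The paper offers no proof of this lemma at all --- it simply points to \cite[Lemma~3]{HW01} and \cite[I, \S~6]{Aro50} --- so there is no in-house argument to compare against; your proposal is essentially a reconstruction of the standard proof from those references, and its overall strategy (Aronszajn's sum-of-kernels theorem, reduction of the additive norm formula to uniqueness of the decomposition, uniqueness by induction using (A~2)) is correct. Two details in the inductive step deserve tightening, though neither is a real obstruction. First, the expansion you describe goes the wrong way: if you expand $g\in H(k)\otimes H_{u\setminus\{j_0\}}$ in an orthonormal basis $(e_i)$ of the $H(k)$-factor, the coefficient functions lie in $H_{u\setminus\{j_0\}}$ and independence of $x_{j_0}$ does not force them to be constant. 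Either expand instead in an orthonormal basis $(\tilde e_i)$ of the \emph{other} factor, writing $g=\sum_i g_i\otimes\tilde e_i$ with $g_i\in H(k)$, so that independence of $x_{j_0}$ forces each $g_i$ to be a constant function and hence an element of $H(k)\cap H(1)=\{0\}$; or keep your expansion and argue that for each fixed $\bsy\in D^{u\setminus\{j_0\}}$ the element $\sum_i h_i(\bsy)\,e_i\in H(k)$ is constant in $x_{j_0}$, hence zero, so all $h_i$ vanish. Second, the tensor-product fact only annihilates the aggregate $\sum_{j_0\in u\subseteq v}f_u$; to conclude that each individual $f_u$ with $j_0\in u$ vanishes you must expand that sum once more over an orthonormal basis of $H(k)$ and apply the induction hypothesis to the resulting coefficient functions in the spaces $H_{u\setminus\{j_0\}}$ --- a second use of the induction hypothesis that your sketch does not mention. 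With these repairs the argument closes, and the infimum in the sum-of-kernels norm indeed collapses to the stated additive formula.
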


In general we follow the convention that $\infty\cdot 0 = 0$. Note that
$\gamma_u = 0$ implies $f_u=0$ for all $f\in H(K_v)$;  in that
case  $\gamma^{-1}\|f_u\|^2_{k_u} = 0$.

Due to Lemma \ref{restrict} we can consider the spaces $H(k_u)$
and $H(K_u)$ as spaces of functions on $D^u$.
In this case we have
$H(k_u) = \otimes_{j\in u} H(k)$, and $H(K_u)$ is a tensor product
space if and only if the weights $(\gamma_u)_{u\in \mathcal{U}}$
are product weights, see, e.g., \cite[I, \S~8]{Aro50}.

Let us define the domain $\X$ of functions of infinitely many variables by
\begin{equation}
 \X := \bigg\{ \bsx\in D^{\N} \,\bigg|\, \sum_{u\in \U} \gamma_u \prod_{j\in u} k(x_j, x_j) <\infty \bigg\}.
\end{equation}
Similar as in \cite[Lemma~1]{HMGNR10} one shows that $\X$ satisfies
$\mu(\X) = 1$.
For $\bsx,\bsy \in\X$ we put
\begin{equation*}
K(\bsx,\bsy) := \sum_{u\in\mathcal{U}} \gamma_u k_u(\bsx,\bsy).
\end{equation*}
Since $K$ is well-defined, symmetric, and positive
semi-definite, it is a reproducing
kernel on $\X \times \X$, see \cite{Aro50}. For the next lemma see \cite[Cor.~5]{HW01} or \cite{GMR12}.
\begin{lemma}
\label{Lemma6}
The reproducing kernel Hilbert space $H(K)$ consists of all functions
$f=\sum_{u\in\mathcal{U}} f_u$, $f_u\in H_u$, such that
\begin{equation*}
\sum_{u\in\mathcal{U}} \gamma^{-1}_u \|f_u\|^2_{k_u} <\infty.
\end{equation*}
In the case of convergence, we have
\begin{equation}\label{norm-formel}
\|f\|^2_K = \sum_{u\in\mathcal{U}} \gamma^{-1}_u \|f_u\|^2_{k_u}.
\end{equation}
\end{lemma}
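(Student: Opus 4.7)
The plan is to follow the standard Aronszajn template for sums of reproducing kernels, adapted to the infinite-sum setting. Define the candidate space
\[
\tilde H := \bigg\{ f = \sum_{u\in\U} f_u : f_u\in H_u,\ \sum_{u\in\U} \gamma_u^{-1}\|f_u\|_{k_u}^2 < \infty \bigg\}
\]
equipped with the inner product $\langle f,g\rangle := \sum_{u\in\U} \gamma_u^{-1}\langle f_u,g_u\rangle_{k_u}$ (with the convention $0 \cdot \infty = 0$ handling indices where $\gamma_u=0$). The goal is to show $\tilde H$ is a Hilbert space and that $K$ is reproducing on it; uniqueness of the RKHS associated with a given kernel then forces $\tilde H = H(K)$ and yields the norm formula (\ref{norm-formel}).

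The first technical step is pointwise well-definedness: for each $\bsx\in\X$ the series $\sum_u f_u(\bsx)$ must converge absolutely. The reproducing property in $H_u$ gives $|f_u(\bsx)| \le \|f_u\|_{k_u} \sqrt{k_u(\bsx,\bsx)}$; writing this as $(\gamma_u^{-1/2}\|f_u\|_{k_u})\,(\gamma_u^{1/2}\sqrt{k_u(\bsx,\bsx)})$ and applying Cauchy--Schwarz across $u\in\U$ yields
\[
\sum_{u\in\U} |f_u(\bsx)| \le \bigg(\sum_{u\in\U} \gamma_u^{-1}\|f_u\|_{k_u}^2\bigg)^{1/2} \bigg(\sum_{u\in\U} \gamma_u k_u(\bsx,\bsx)\bigg)^{1/2},
\]
which is finite by definition of $\tilde H$ and of $\X$. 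I would then check that $K(\cdot,\bsy)\in \tilde H$ for each $\bsy\in\X$: decompose $K(\cdot,\bsy) = \sum_u \gamma_u k_u(\cdot,\bsy)$ with $\gamma_u k_u(\cdot,\bsy)\in H_u$, and observe that $\sum_u \gamma_u^{-1}\|\gamma_u k_u(\cdot,\bsy)\|_{k_u}^2 = \sum_u \gamma_u k_u(\bsy,\bsy) = K(\bsy,\bsy)<\infty$. The reproducing identity then falls out term by term: $\langle f, K(\cdot,\bsy)\rangle = \sum_u \gamma_u^{-1}\langle f_u, \gamma_u k_u(\cdot,\bsy)\rangle_{k_u} = \sum_u f_u(\bsy) = f(\bsy)$.

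Completeness is a Fatou-type exercise. Given a Cauchy sequence $(f^{(n)})$ in $\tilde H$ with $f^{(n)}=\sum_u f_u^{(n)}$, each coordinate sequence $(f_u^{(n)})_n$ is Cauchy in $H_u$ and thus converges to some $f_u\in H_u$ (setting $f_u=0$ when $\gamma_u=0$). For any finite $S\subset\U$, the inequality $\sum_{u\in S}\gamma_u^{-1}\|f_u^{(n)}-f_u^{(m)}\|_{k_u}^2 \le \|f^{(n)}-f^{(m)}\|^2$ passes to the limit in $m$ and then in $S\uparrow\U$, yielding that $f := \sum_u f_u$ lies in $\tilde H$ and $f^{(n)}\to f$ in norm.

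The main obstacle is subtle: the inner product on $\tilde H$ must be well-defined, i.e. independent of the representation $f=\sum_u f_u$. By Aronszajn's general sum-of-kernels theorem $\|f\|_K^2$ equals the infimum of $\sum_u \gamma_u^{-1}\|f_u\|_{k_u}^2$ over all admissible decompositions, so what is really at stake is that this infimum is attained by a \emph{unique} decomposition. This is precisely where assumption (A\,2) enters: combined with Assumption (A\,1) and (A\,3), $H(k)\cap H(1)=\{0\}$ forces $H_u \cap H_v = \{0\}$ whenever $u\neq v$ (functions in $H_u$ have a tensor-product structure that is $\rho$-orthogonal to those in $H_v$), which cascades to uniqueness via Lemma \ref{Lemma5} applied to all finite $v\in\U$ followed by a monotone passage to the limit over $v\uparrow\U$. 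Once uniqueness of the decomposition is secured for finite sections and passed to the infinite setting through the completeness argument above, the norm identity (\ref{norm-formel}) is automatic.
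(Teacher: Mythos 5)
The paper offers no proof of Lemma \ref{Lemma6}: it is quoted from \cite[Cor.~5]{HW01} and \cite{GMR12}, so there is no in-text argument to compare against. Your construction---realize the candidate space $\tilde H$ as the image of the weighted $\ell^2$-direct sum $\bigoplus_{u}\gamma_u H_u$ under pointwise summation, prove absolute convergence on $\X$ by Cauchy--Schwarz against $\sum_u\gamma_u k_u(\bsx,\bsx)<\infty$, check $K(\cdot,\bsy)\in\tilde H$ together with the termwise reproducing identity, establish completeness by the Fatou argument, and conclude by uniqueness of the RKHS attached to a given kernel---is the standard route, and those steps are all correct as written.

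The one place where the argument is genuinely incomplete is the step you yourself single out: well-definedness of the inner product, i.e.\ injectivity of the summation map. Two issues arise. First, pairwise trivial intersections $H_u\cap H_v=\{0\}$ do not by themselves exclude a nontrivial relation $\sum_u h_u=0$ with infinitely many nonzero terms; for an infinite family one needs a quantitative argument, and ``monotone passage to the limit over $v\uparrow\U$'' is precisely the part that has to be supplied. A workable version: restrict the identity $\sum_u h_u=0$ to the affine subspace $\{\bsx : \bsx=(\bsx_v;\bsa)\}$ for finite $v$; by Lemma \ref{restrict} and the computation behind Lemma \ref{Analogon-Lemma7} each $\Psi_{v,\bsa}h_u$ lies in $H_{u\cap v}$, so the finite-dimensional uniqueness of Lemma \ref{Lemma5} gives, for each fixed $w\subseteq v$, the identity $h_w=-\sum_{\emptyset\neq u'\subset\N\setminus v}\Psi_{v,\bsa}(h_{w\cup u'})$, whose $k_w$-norm is controlled (via Cauchy--Schwarz and estimates of the type (\ref{norm_psi_k-1})) by a tail of the convergent series (\ref{summable}) evaluated at the anchor, hence tends to $0$ as $v\uparrow\N$; thus $h_w=0$. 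Second, your parenthetical justification via ``$\rho$-orthogonality'' quietly invokes assumption (A~2a), which the paper introduces only in the integration subsection, \emph{after} Lemma \ref{Lemma6}; at this stage only the algebraic condition (A~2), $H(k)\cap H(1)=\{0\}$, is available. (Under (A~2a) the spaces $H_u$ are indeed mutually orthogonal in $L^2$, cf.\ Remark \ref{YES-ANOVA!}, and integrating out variables gives a shorter uniqueness proof---but that is a different hypothesis.) With the tail estimate supplied, the remainder of your argument goes through and yields (\ref{norm-formel}).
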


If $f\in H(K)$, then the decomposition
\begin{equation}\label{function-decomp}
f=\sum_{u\in \mathcal{U}} f_u,
\hspace{3ex}f_u \in H_u,
\end{equation}
is uniquely defined, since $f_u$ is the orthogonal
projection of $f$ onto $H_u$.

\subsection{Integration}

Integration with respect to the probablitiy measure $\mu$
defines a bounded linear functional
\begin{equation*}
I(f) := \int_{\X} f(\bsx)\, \mu({\rm d}\bsx)
\end{equation*}
on $H(K)$, as verified by the following estimates:
\begin{equation*}
\int_{\X} |f(\bsx)|\, \mu({\rm d}\bsx) =
\int_{\X} |\langle f, K(\cdot, \bsx) \rangle_K|
\, \mu({\rm d}\bsx) \le \|f\|_K \int_{\X} \|K(\cdot, \bsx)\|_K
\, \mu({\rm d}\bsx),
\end{equation*}
and
\begin{equation*}
\begin{split}
\left( \int_{\X} \|K(\cdot, \bsx)\|_K \,\mu({\rm d}\bsx) \right)^2
\le \int_{\X} \|K(\cdot, \bsx)\|_K^2 \,\mu(d \bsx)
= \int_{\X} K(\bsx, \bsx) \,\mu({\rm d}\bsx)
\le \sum_{u\in\U} \gamma_u M^{|u|},
\end{split}
\end{equation*}
and the last term is finite due to (\ref{summable}). The representer $h\in H(K)$ of the integration functional $I$ is given by
\begin{equation}
\label{representer}
h(\bsx) = \langle h, K(\cdot, \bsx) \rangle_K
= \int_{\X} K(\bsx,\bsy)\, \mu({\rm d}\bsy).
\end{equation}
Similar as above, it is easily shown that for every $u\in H_u$
\begin{equation*}
I_u(f) := \int_{D^u} f(\bsx)\, \rho^u({\rm d}\bsx)
\end{equation*}
defines a bounded linear functional on $H_u$. It is also easily shown that
$H(K) \subset L^2(\X,\mu)$ and
$H_u \subset L^2(D^u, \rho^u)$ for all $u\in\U$. For the rest of this article we assume that the following
assumptions hold:
\begin{assumptions}
We assume that
\begin{itemize}
 \item[{\rm (A~2a)}]
$\int_D k(x,y)\,\rho({\rm d}x) = 0$
for all $y\in D$.

 \item[{\rm (A~5)}]
For all $a\in D$ we have $k(a,a)>0$.
\end{itemize}
\end{assumptions}
Note that assumption (A~2a) and identity (\ref{representer}) immediately imply that
\begin{equation}
\label{h=1}
h(\bsx) = 1 \hspace{2ex}\text{for all $\bsx\in \X$.}
\end{equation}
Thus, if there exists an $a^*\in D$ with $k(a^*,a^*) = 0$, then this
results for $\bsa^*:=(a^*)_{j\in\N}$ in $K(\cdot,\bsa^*) = h$,
which leads to $I(f) = f(\bsa^*)$ for all $f\in H(K)$.
Assumption (A~5) avoids this trivial integration problem.

Under assumption (A~2a), the uniquely determined decomposition (\ref{function-decomp})
is in fact the ANOVA decomposition of $f$, see Remark \ref{YES-ANOVA!}.

\subsection{Projections}\label{PROJECTIONS}

Let us choose an anchor $\bsa\in \X$. Here the most interesting case
seems to us a vector $\bsa$ whose entries are all equal to $a$, where
$a\in D$ satisfies
\begin{equation}
 \label{anker_a}
\sum_{u\in \U} \gamma_u k(a,a)^{|u|} < \infty;
\end{equation}
note that
condition (\ref{summable}) ensures that such an $a$ exists.
For the sake of generality we will consider a general $\bsa \in \X$.
But to make proofs not unnecessarily complicated, we will restrict
ourselves to anchors $\bsa = (a,a,\ldots) \in \X$ for the concrete
analysis of our constructive multilevel algorithms in the case of product weights and
finite-intersection weights.

We define for $u\in \mathcal{U}$
\begin{equation*}
(\Psi_{u,\bsa}f)(\bsx) := f(\bsx_u; \bsa)
\hspace{2ex}\text{for all $\bsx\in \X$,}
\end{equation*}
where $(\bsx_u;\bsa) := (\bsx_u, \bsa_{\N\setminus u})$.
Note that due to (\ref{anker_a}) we have $(\bsx_u;\bsa) \in \X$.

For $u, v, w \in\mathcal{U}$ with $u\subseteq v \subset w$ we define
\begin{equation}
\label{f+-}
f_{u,v}^+ := \sum_{u' \subset \N\setminus v} f_{u\cup u'}
\hspace{2ex}\text{and}\hspace{2ex}
f_{u,v,w}^- := \sum_{u' \subset \N\setminus v \,;\, u'\cap w \neq \emptyset} \,
f_{u\cup u'},
\end{equation}
as well as
\begin{equation}
\label{r}
r^2_{v,u,\bsa} :=
\sum_{u'\subset \N\setminus v}
\gamma_{u\cup u'}\, k_{u'}(\bsa,\bsa)
\end{equation}
and
\begin{equation}
\label{r_tilde}
\tilde{r}^2_{w,v,u,\bsa} :=
\sum_{u'\subset \N\setminus v\,;\,
u'\cap w \neq \emptyset}
\gamma_{u\cup u'}\, k_{u'}(\bsa,\bsa).
\end{equation}
Since $\bsa\in\X$, the quantities $r_{v,u,\bsa}$ and $\tilde{r}_{w,v,u,\bsa}$ are
finite. Furthermore, we have $\tilde{r}^2_{w,v,u,\bsa} \le r^2_{v,u,\bsa} - \gamma_{u}$.

\begin{remark}\label{Obs}
Observe that we have the orthogonal decomposition
\begin{equation}
 \label{obs}
f = \sum_{u\subseteq v} f^+_{u,v}
\hspace{2ex}\text{for all $f\in H(K)$ and all $v\in\U$.}
\end{equation}
For a fixed $f \in H(K)$ and $v \subset w$ the functions $f^{-}_{u,v,w}$, $u \in v$, form an orthogonal function system in $H(K)$.
\end{remark}

\begin{lemma}
\label{Analogon-Lemma7}
For all $f\in H(K)$ and all finite subsets
$u\subseteq v \subset w$ of $\N$ we have
$\Psi_{v,\bsa}(f^+_{u,v}), \Psi_{v,\bsa}(f^-_{u,v,w}) \in H_u$ and
the norm estimates
\begin{equation}
\label{norm_psi_k}
\| \Psi_{v,\bsa}
(f^+_{u,v}) \|_{k_{u}}
\le r_{v,u,\bsa} \|  f^+_{u,v} \|_{K}
\end{equation}
and
\begin{equation}
\label{norm_psi_k-1}
\| \Psi_{v,\bsa} (f^-_{u,v,w}) \|_{k_{u}}
\le \tilde{r}_{w,v,u,\bsa} \| f^-_{u,v,w} \|_{K}.
\end{equation}
Furthermore, $\Psi_{v,\bsa}$ is a bounded
projection from $H(K)$ onto $H(K_v)$, and its operator norm is given by
\begin{equation}
\label{op_norm_pro}
 \|\Psi_{v,\bsa}\|_{K\to K_v} = \max_{u\subseteq v \,;\, \gamma_u>0} \gamma^{-1/2}_u r_{v,u,\bsa}.
\end{equation}
\end{lemma}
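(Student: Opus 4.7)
The plan is to build up from a single tensor-product block to the sums $f^{\pm}$, and finally to the full projection norm. I would first derive the elementary building block: for $u\subseteq v$, $u'\subset\N\setminus v$, and $g\in H_{u\cup u'}$, exploit the tensor product identification $H_{u\cup u'} = H_u\otimes H_{u'}$ (valid on $D^{u\cup u'}$ by Lemma~\ref{restrict}) and pick an orthonormal basis $\{e_i\}$ of $H_u$ to write $g=\sum_i e_i\otimes h_i$ with $h_i\in H_{u'}$ and $\|g\|_{k_{u\cup u'}}^2=\sum_i\|h_i\|_{k_{u'}}^2$. Since $u\cap u'=\emptyset$ and the $u'$-coordinates of $(\bsx_v;\bsa)$ equal $\bsa_{u'}$, one has $\Psi_{v,\bsa}g=\sum_i h_i(\bsa_{u'})e_i\in H_u$, and the reproducing property plus Cauchy--Schwarz give
\[
\|\Psi_{v,\bsa}g\|_{k_u}^2 \;=\; \sum_i |h_i(\bsa_{u'})|^2 \;\le\; k_{u'}(\bsa,\bsa)\sum_i\|h_i\|_{k_{u'}}^2 \;=\; k_{u'}(\bsa,\bsa)\,\|g\|_{k_{u\cup u'}}^2.
\]

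Next I would apply this termwise to $f^+_{u,v}=\sum_{u'\subset\N\setminus v}f_{u\cup u'}$. Each summand lands in $H_u$, so $\Psi_{v,\bsa}(f^+_{u,v})\in H_u$. Using the triangle inequality in $H_u$, inserting $\gamma_{u\cup u'}^{1/2}\gamma_{u\cup u'}^{-1/2}$, and applying Cauchy--Schwarz, one gets
\[
\|\Psi_{v,\bsa}(f^+_{u,v})\|_{k_u}
\le \sum_{u'}\sqrt{\gamma_{u\cup u'}k_{u'}(\bsa,\bsa)}\cdot\gamma_{u\cup u'}^{-1/2}\|f_{u\cup u'}\|_{k_{u\cup u'}}
\le r_{v,u,\bsa}\,\|f^+_{u,v}\|_K,
\]
where the last equality invokes Lemma~\ref{Lemma6} and the definition (\ref{r}). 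Inequality (\ref{norm_psi_k-1}) follows by the identical argument restricted to those $u'$ meeting $w$, giving $\tilde r_{w,v,u,\bsa}$ in place of $r_{v,u,\bsa}$.

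For the projection claim, I would use Remark~\ref{Obs} to write $f=\sum_{u\subseteq v}f^+_{u,v}$ so that $\Psi_{v,\bsa}f=\sum_{u\subseteq v}\Psi_{v,\bsa}(f^+_{u,v})$ with $u$th summand in $H_u$; Lemma~\ref{Lemma5} then places $\Psi_{v,\bsa}f$ in $H(K_v)$ and gives the norm formula $\|\Psi_{v,\bsa}f\|_{K_v}^2=\sum_{u\subseteq v}\gamma_u^{-1}\|\Psi_{v,\bsa}(f^+_{u,v})\|_{k_u}^2$ (uniqueness of that decomposition). Projectivity is immediate: if $f\in H(K_v)$, each ANOVA-style component $f_u$ depends only on $\bsx_u$ with $u\subseteq v$ (Lemma~\ref{restrict}), hence is fixed by $\Psi_{v,\bsa}$. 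Combining the norm formula with (\ref{norm_psi_k}) and the orthogonal decomposition $\|f\|_K^2=\sum_{u\subseteq v}\|f^+_{u,v}\|_K^2$ yields the upper bound
\[
\|\Psi_{v,\bsa}f\|_{K_v}^2 \le \Bigl(\max_{u\subseteq v,\,\gamma_u>0}\gamma_u^{-1}r_{v,u,\bsa}^2\Bigr)\|f\|_K^2.
\]

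The main obstacle, and the last step, is showing this bound is tight. I would fix $u^*\subseteq v$ with $\gamma_{u^*}>0$ attaining the maximum and, for an arbitrary unit $h\in H_{u^*}$, build the extremal function
\[
f(\bsx) \;:=\; r_{v,u^*,\bsa}^{-1}\,h(\bsx_{u^*})\sum_{u'\subset\N\setminus v}\gamma_{u^*\cup u'}\,k_{u'}(\bsx_{u'},\bsa_{u'}),
\]
so that its $(u^*\cup u')$-component is $r_{v,u^*,\bsa}^{-1}\gamma_{u^*\cup u'}\,h\otimes k_{u'}(\cdot,\bsa_{u'})$ and all other ANOVA-style components vanish. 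A direct computation via Lemma~\ref{Lemma6} and the reproducing property gives $\|f\|_K=1$, while $\Psi_{v,\bsa}f=\gamma_{u^*}^{-1/2}\cdot\gamma_{u^*}^{1/2}r_{v,u^*,\bsa}^{-1}\,h\cdot\sum_{u'}\gamma_{u^*\cup u'}k_{u'}(\bsa,\bsa)=r_{v,u^*,\bsa}\,h$ in $H_{u^*}$, whence $\|\Psi_{v,\bsa}f\|_{K_v}=\gamma_{u^*}^{-1/2}r_{v,u^*,\bsa}$. This attains the upper bound and establishes (\ref{op_norm_pro}). The care in this last step is in verifying absolute convergence of the series defining $f$ in $H(K)$, which is guaranteed by $\bsa\in\X$ and assumption~(A~4).
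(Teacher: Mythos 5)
Your proof is correct, but it takes a genuinely different route from the paper's. The paper proves the key estimate (\ref{norm_psi_k}) by invoking the black-box result \cite[Lemma~15]{HMGNR10}: it introduces the auxiliary kernel $K'$ (keeping only the weights $\gamma_{u'}$ with $u'\cap v=u$) and the induced kernel $J((\bsx_1,\bsx_2),(\bsy_1,\bsy_2))=K'((\bsx_1;\bsa),(\bsy_2;\bsa))$, computes $J=k_u\,r^2_{v,u,\bsa}$, and concludes that the image of $B(K')$ under $\Psi_{v,\bsa}$ is \emph{exactly} the ball of radius $r_{v,u,\bsa}$ in $H(k_u)$ --- so the inequality and its sharpness come in one stroke, and the lower bound in (\ref{op_norm_pro}) is then obtained simply by citing that sharpness for the maximizing $u^*$. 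You instead prove the block estimate $\|\Psi_{v,\bsa}g\|_{k_u}\le k_{u'}(\bsa,\bsa)^{1/2}\|g\|_{k_{u\cup u'}}$ by hand via the tensor-product expansion and the reproducing property, sum over $u'$ with a weighted Cauchy--Schwarz to recover $r_{v,u,\bsa}$, and then must separately exhibit the explicit extremizer $f=r_{v,u^*,\bsa}^{-1}h\cdot\sum_{u'}\gamma_{u^*\cup u'}k_{u'}(\cdot,\bsa_{u'})$ to show the operator-norm bound is attained (your computation $\|f\|_K=1$, $\Psi_{v,\bsa}f=r_{v,u^*,\bsa}h$ checks out, and this is essentially the function implicit in the paper's choice $f=f^+_{u^*,v}$). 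What your approach buys is self-containedness --- no reliance on the external lemma --- at the modest cost of constructing the extremizer and of a couple of convergence checks you gloss over (that $\Psi_{v,\bsa}$ commutes with the $H(K)$-convergent sum $\sum_{u'}f_{u\cup u'}$, and that $\sum_{u\subseteq v}\Psi_{v,\bsa}(f^+_{u,v})$ is indeed the unique $H(K_v)$-decomposition of $\Psi_{v,\bsa}f$); both follow from continuity of point evaluation and are routine. The remaining parts (projectivity, the upper bound on the operator norm via Lemma~\ref{Lemma5} and orthogonality of the $f^+_{u,v}$) coincide with the paper's argument.
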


\begin{proof}
To prove (\ref{norm_psi_k}), we apply \cite[Lemma 15]{HMGNR10}: Put
\begin{equation*}
E_1:=D^{v}
\hspace{2ex}\text{and}\hspace{2ex}
E_2:= \bigcap_{u\subseteq v} \bigg\{\bsx\in D^{\N\setminus v}
\, \bigg|\, \sum_{u'\subset \N\setminus v} \gamma_{u\cup u'}
\prod_{j\in u'} k(x_j,x_j) < \infty \bigg\}.
\end{equation*}
Since we assumed that there exists no $a^* \in D$ with $k(a^*,a^*) = 0$,
it is easy
to observe that $\X= E_1\times E_2$, see also \cite{GMR12}.
Put $K'(\bsx,\bsy) := \sum_{u'\in \mathcal{U}} \gamma_{u'}'\,k_{u'}(\bsx,\bsy)$,
where
$\gamma_{u'}' := \gamma_{u'}$ if $u'\cap v =  u$ and $\gamma_{u'}'=0$ otherwise.
Let the reproducing kernel $J$ be defined by
$J((\bsx_1,\bsx_2), (\bsy_1,\bsy_2)) := K'((\bsx_1;\bsa),(\bsy_2;\bsa))$
for $\bsx_1, \bsy_1 \in E_1$, $\bsx_2, \bsy_2\in E_2$. Then
\begin{equation*}
\begin{split}
J(\bsx,\bsy) &=
\sum_{u'\subset \N\setminus v}
\gamma_{u\cup u'}\, k_{u\cup u'}((\bsx_{v};\bsa),
(\bsy_{v};\bsa))\\
&= \sum_{u'\subset \N\setminus v}
\gamma_{u\cup u'} \prod_{\nu \in u} k(x_\nu,y_\nu)
\prod_{\nu \in u'} k(a_\nu,a_\nu)
= k_{u}(\bsx,\bsy)\, r^2_{v,u,\bsa}.
\end{split}
\end{equation*}
Due to \cite[Lemma~15]{HMGNR10} we thus have
\begin{equation*}
\{\Psi_{v,\bsa}(f) \,|\, f\in B(K') \}
= B(J) = \{f\in H(k_{u}) \,|\, \|f\|_{k_{u}} \le r_{v,u,\bsa} \}.
\end{equation*}
Observe that $f^+_{u,v} \in B(K')$ and that due to (\ref{norm-formel}) the right hand side in (\ref{norm_psi_k}) is invariant
under substituting the norm $\|\cdot\|_K$ by $\|\cdot\|_{K'}$.
Hence we have proved (\ref{norm_psi_k}) and seen that the constant
$r_{v,u,\bsa}$ appearing on the right hand side is optimal. The estimate (\ref{norm_psi_k-1}) follows analogously. Due to Lemma \ref{Lemma5} and (\ref{obs})
we get for $f\in B(K)$
\begin{equation*}
\begin{split}
 \|\Psi_{v,\bsa}(f) \|^2_{K_v} &= \sum_{u\subseteq v; \gamma_u>0} \gamma_u^{-1}
\| \Psi_{v,\bsa}(f^+_{u,v}) \|^2_{k_u} \\
&\le \sum_{u\subseteq v; \gamma_u>0} \gamma^{-1}_u r^2_{v,u,\bsa} \|f^+_{u,v}\|^2_K
\le \max_{u\subseteq v \,;\, \gamma_u>0} \gamma^{-1}_u r^2_{v,u,\bsa}.
\end{split}
\end{equation*}
If $u^* \subseteq v$ satisfies
$\gamma_{u^*}^{-1} r_{v,u^*,\bsa}^2 =  \max_{u\subseteq v \,;\, \gamma_u>0} \gamma^{-1}_u r^2_{v,u,\bsa}$, then we get for $f\in B(K)$ with $f = f^+_{u^*,v}$
\begin{equation}
 \label{evidence}
\| \Psi_{v,\bsa}(f^+_{u^*,v}) \|^2_{K_v}
= \gamma_{u^*}^{-1} \| \Psi_{v,\bsa}(f^+_{u^*,v}) \|^2_{k_{u^*}}
\le \gamma^{-1}_{u*} r^2_{v,u^*,\bsa}.
\end{equation}
Recall that this inequality is invalid for some $f$ with $\|f^+_{u^*,v}\|_K =1$
if we decrease the right hand side of (\ref{evidence}). Thus
$\|\Psi_{v,\bsa}\|_{K\to K_v} = \max_{u\subseteq v\,;\, \gamma_u>0}
\gamma_u^{-1/2}r_{v,u,\bsa}$.
\end{proof}

\begin{remark}
 \label{YES-ANOVA!}
For $u\in\U$  and $f_u\in H_u$ we have
\begin{equation}
\label{anova-int}
 \int_D f_u(\bsx)\, \rho({\rm d}x_j) = 0
\hspace{2ex}\text{for all $j\in u$, $\bsx \in \X$.}
\end{equation}
Indeed, assumption (A~2a) implies that
(\ref{anova-int}) holds for all functions $k_u(\cdot,\bsy)$, $\bsy \in \X$.
Since the linear span of these functions is dense in $H_u$, and since
$I_{\{j\}} \circ \Psi_{\{j\}, \bsx}$ is a continuous linear functional
on $H_u \subseteq H(K)$, identity (\ref{anova-int}) is valid.

With the help of (\ref{anova-int}) it is easy to show that for $v\in \U$
and $f\in H(K_v)$ the uniquely determined decomposition $f=\sum_{u\subseteq v}
f_u$, $f_u\in H_u$,  is exactly the ANOVA decomposition
of $f$ in $L^2(D^v,\rho^v)$. (Similarly as in the proof of Lemma \ref{ANOVA} this can be shown by induction on $|u|$.) In this sense, the uniquely determined decomposition $f=\sum_{u\in\U} f_u$,
$f_u\in H_u$, of $f\in H(K)$ is nothing but the \emph{infinite-dimensional
ANOVA decomposition} of $f$ in $L^2(\X,\mu)$.
\end{remark}
\begin{remark}
\label{interestingQ}
 An interesting question is under what conditions on the weights the
operator norms of the projections $\Psi_{v,\bsa}$ satisfy for some $C>0$
\begin{equation}
 \label{boundedness}
\|\Psi_{v,\bsa}\|_{K\to K_v} \le C
\hspace{2ex}\text{for all $v\in \U$.}
\end{equation}
(This question is in fact relevant for our lower bounds in Theorem
\ref{GenLowBou} and Corollary \ref{lowboundFIW}.) It is easily seen that product weights $\bsgamma$ that satisfy
(\ref{summable}) also satisfy (\ref{boundedness}), see also
\cite[Lemma~7]{HMGNR10}. That this has not necessarily to be the case for general weights, even not
for finite-intersection weights, shows the following example:
Let $a_j = a$ for some $a\in D$ and all $j\in\N$. For a given $\varepsilon >0$ let 
\begin{equation*}
\gamma_{u} =
\begin{cases}
\,j^{-2-\varepsilon}
\hspace{2ex}&\text{if $u = \{j\}$ for some $j\in\N$},\\
\,j^{-1-\varepsilon}
\hspace{2ex}&\text{if $u = \{j,j+1\}$ for some $j\in\N$},\\
\,0
\hspace{2ex} &\text{otherwise.}
\end{cases}
\end{equation*}
The weights
$\bsgamma = (\gamma_u)_{u\in\U}$ we obtain in this way are summable
finite-intersection weights. If $j = \max v$, then (\ref{op_norm_pro})
implies
\begin{equation*}
 \|\Psi_{v,\bsa}\|^2_{K\to K_v} \ge \gamma^{-1}_{\{j\}} r^2_{v, \{j\},\bsa}
\ge \frac{\gamma_{\{j,j+1\}}}{\gamma_{\{j\}}} k(a,a) = j\,k(a,a)
\to \infty
\hspace{2ex}\text{as $j \to \infty$.}
\end{equation*}
For a vector $\bsa$ with identical entries $a\in D$ and finite-intersection weights $\bsgamma$ of order
$\omega$ and with intersection degree $\rho$ the monotonicity condition
\begin{equation}
 \label{monoton}
\gamma_u \ge \gamma_v \hspace{2ex}\text{for all $u,v\in \U$ with $u\subseteq v$ and
$\gamma_u >0$}
\end{equation}
is sufficient to ensure that (\ref{boundedness}) holds, since then we have
for $\emptyset \neq u\subseteq v$, $\gamma_u>0$
\begin{equation*}
  \gamma_u^{-1} r^2_{v,u,\bsa}
\le \sum_{{w\in\N\setminus v \atop \gamma_{u\cup w} >0} } k(a,a)^{|w|}
\le (1+\rho) \max\{1, k(a,a)^{\omega}\},
\end{equation*}
and (\ref{boundedness}) follows from (\ref{op_norm_pro}) and (\ref{summable}).
\end{remark}

\begin{lemma}
\label{Psi-Anova}
For $v,w\in\mathcal{U}$ with $v\subset w$ we have for all $f\in H(K)$
\begin{equation*}
(\Psi_{w,\bsa} - \Psi_{v,\bsa})f
=
\sum_{u\subseteq v}\, \sum_{\emptyset\neq u' \subseteq w\setminus v}
\Psi_{w,\bsa}(f^{+}_{u\cup u',w})
- \sum_{u\subseteq v} \Psi_{v,\bsa}(f^{-}_{u,v,w}).
\end{equation*}
\end{lemma}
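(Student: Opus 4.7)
The plan is to express both $\Psi_{v,\bsa}f$ and $\Psi_{w,\bsa}f$ via the orthogonal decomposition of Remark \ref{Obs}, and then to compare term by term. Since $v$ and $w$ are finite, $f=\sum_{u\subseteq v}f^{+}_{u,v}=\sum_{\tilde u\subseteq w}f^{+}_{\tilde u,w}$ are finite sums, and linearity of $\Psi_{v,\bsa}$ and $\Psi_{w,\bsa}$ let us distribute the projections over them without any convergence subtlety.

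The key combinatorial identity I would first establish is
\begin{equation*}
f^{+}_{u,v}=f^{+}_{u,w}+f^{-}_{u,v,w}\qquad(u\subseteq v).
\end{equation*}
This follows immediately from the definitions in (\ref{f+-}): an index set $u'\subset\N\setminus v$ splits uniquely as $u'=u'_1\cup u'_2$ with $u'_1\subseteq w\setminus v$ and $u'_2\subseteq \N\setminus w$, and since $u'\cap v=\emptyset$ we have $u'\cap w=u'_1$, so $u'_1=\emptyset$ exactly captures $f^{+}_{u,w}$ and $u'_1\neq\emptyset$ exactly captures $f^{-}_{u,v,w}$. In parallel, I would reorganize the $w$-side decomposition by writing each $\tilde u\subseteq w$ uniquely as $\tilde u=u\cup u'$ with $u=\tilde u\cap v$ and $u'=\tilde u\cap(w\setminus v)$, yielding
\begin{equation*}
\Psi_{w,\bsa}f=\sum_{u\subseteq v}\Psi_{w,\bsa}(f^{+}_{u,w})+\sum_{u\subseteq v}\sum_{\emptyset\neq u'\subseteq w\setminus v}\Psi_{w,\bsa}(f^{+}_{u\cup u',w}).
\end{equation*}

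The remaining point is that
\begin{equation*}
\Psi_{v,\bsa}(f^{+}_{u,w})=\Psi_{w,\bsa}(f^{+}_{u,w})\qquad(u\subseteq v\subset w).
\end{equation*}
To see this, use Lemma \ref{restrict}: the summands $f_{u\cup u''}\in H_{u\cup u''}$ constituting $f^{+}_{u,w}$ depend only on the coordinates in $u\cup u''$, with $u\subseteq v\subseteq w$ and $u''\subseteq\N\setminus w\subseteq\N\setminus v$. Hence under either $\Psi_{v,\bsa}$ or $\Psi_{w,\bsa}$ the coordinates in $u$ are taken from $\bsx$ and those in $u''$ from $\bsa$, producing the same function; extending by linearity (and, if desired, using the boundedness of the projections established in Lemma \ref{Analogon-Lemma7} to commute with the defining series of $f^{+}_{u,w}$) gives the claim.

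Plugging $f^{+}_{u,v}=f^{+}_{u,w}+f^{-}_{u,v,w}$ into $\Psi_{v,\bsa}f=\sum_{u\subseteq v}\Psi_{v,\bsa}(f^{+}_{u,v})$ and subtracting from the reorganized expression for $\Psi_{w,\bsa}f$, the $\sum_{u\subseteq v}\Psi_{w,\bsa}(f^{+}_{u,w})$ and $\sum_{u\subseteq v}\Psi_{v,\bsa}(f^{+}_{u,w})$ terms cancel by the commutativity just established, leaving exactly the asserted identity. The only genuine obstacle is the bookkeeping of the index sets and making precise the commutativity $\Psi_{v,\bsa}(f^{+}_{u,w})=\Psi_{w,\bsa}(f^{+}_{u,w})$; everything else is routine algebra in $H(K)$.
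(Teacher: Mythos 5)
Your proposal is correct and follows essentially the same route as the paper: both arguments hinge on Lemma \ref{restrict} giving $\Psi_{v,\bsa}f_{u}=\Psi_{w,\bsa}f_{u}$ whenever $u\cap(w\setminus v)=\emptyset$, combined with the same splitting of index sets $u'\subset\N\setminus v$ according to whether they meet $w\setminus v$. The only difference is organizational — you expand both projections and cancel, while the paper observes directly that the difference operator annihilates the terms with $u\cap v=u\cap w$ and applies it only to the remainder.
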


\begin{proof}
Let $u\in \mathcal{U}$ satisfy $u\cap v = u\cap w$. Then, due to Lemma \ref{restrict},  we have $\Psi_{w,\bsa}f_u(\bsx) = \Psi_{v,\bsa}f_u(\bsx)$
for all $\bsx \in\X$.
Thus
\begin{equation*}
\begin{split}
(\Psi_{w,\bsa}- \Psi_{v,\bsa})f &= (\Psi_{w,\bsa}- \Psi_{v,\bsa})
\sum_{u\subseteq v}\, \sum_{\emptyset\neq u' \subseteq w\setminus v}
\,\sum_{u''\subset \N\setminus w}
f_{u\cup u'\cup u''}\\
&= \sum_{u\subseteq v}\, \sum_{\emptyset\neq u' \subseteq w\setminus v}
\Psi_{w,\bsa}(f^{+}_{u\cup u',w})
- \sum_{u\subseteq v} \Psi_{v,\bsa}(f^{-}_{u,v,w}).
\end{split}
\end{equation*}
\end{proof}

\begin{lemma}
\label{Bias-Estimate}
For any $v\in \mathcal{U}$ we have
\begin{equation*}
{\rm b}^2_{v,\bsa} := \sup_{f\in B(K)} | I(f) - I(\Psi_{v,\bsa}f)|^2 = \sum_{\emptyset\neq u \subset \N\setminus v}
\gamma_u k_u(\bsa,\bsa).
\end{equation*}
\end{lemma}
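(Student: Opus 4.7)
The plan is to expand $f\in H(K)$ in its infinite-dimensional ANOVA decomposition $f=\sum_{u\in\mathcal{U}}f_u$ (see Remark \ref{YES-ANOVA!}) and compute both $I(f)$ and $I(\Psi_{v,\bsa}f)$ term by term. Since $I$ is a bounded linear functional on $H(K)$ and the ANOVA series converges there, the interchange with summation is justified. Property (\ref{anova-int}) gives $I(f_u)=0$ whenever $u\neq\emptyset$, so that $I(f)=f_\emptyset$, and the whole task reduces to isolating which ANOVA terms survive after applying the projection $\Psi_{v,\bsa}$.

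For a fixed $u\in\mathcal{U}$ the function $\Psi_{v,\bsa}f_u$ only depends on the coordinates $\bsx_{u\cap v}$, because $f_u$ depends only on the coordinates in $u$ and the coordinates in $u\setminus v$ get frozen to $\bsa$. If $u\cap v\neq\emptyset$, I pick any $j\in u\cap v$ and integrate over $x_j$ first: by (\ref{anova-int}) this inner integral is $0$, hence $I(\Psi_{v,\bsa}f_u)=0$. If instead $u\subseteq \N\setminus v$, then $\Psi_{v,\bsa}f_u$ is the constant $f_u(\bsa)$, so $I(\Psi_{v,\bsa}f_u)=f_u(\bsa)$. Summing over $u\in\mathcal{U}$ and subtracting from $I(f)=f_\emptyset$ yields
\begin{equation*}
I(f)-I(\Psi_{v,\bsa}f)\;=\;-\sum_{\emptyset\neq u\subset \N\setminus v} f_u(\bsa).
\end{equation*}

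The remaining step is to evaluate the supremum of the squared absolute value of the bounded linear functional $L(f):=\sum_{\emptyset\neq u\subset \N\setminus v}f_u(\bsa)$ on $H(K)$ via Riesz representation. Using the reproducing property $f_u(\bsa)=\langle f_u,k_u(\cdot,\bsa)\rangle_{k_u}$ together with Lemma \ref{Lemma6}, one checks that
\begin{equation*}
\eta \;:=\; \sum_{\emptyset\neq u\subset \N\setminus v,\,\gamma_u>0} \gamma_u\, k_u(\cdot,\bsa)
\end{equation*}
satisfies $\langle f,\eta\rangle_K = L(f)$ for every $f\in H(K)$. Each summand $\gamma_u k_u(\cdot,\bsa)$ already lies in $H_u$ and, thanks to (A~2a), is its own ANOVA component (since $\int k(x_j,a_j)\,\rho({\rm d}x_j)=0$ for every $j\in u$). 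Hence formula (\ref{norm-formel}) gives
\begin{equation*}
\|\eta\|_K^2 \;=\; \sum_{\emptyset\neq u\subset \N\setminus v} \gamma_u^{-1}\,\|\gamma_u k_u(\cdot,\bsa)\|_{k_u}^2
\;=\; \sum_{\emptyset\neq u\subset \N\setminus v} \gamma_u\, k_u(\bsa,\bsa),
\end{equation*}
which is finite precisely because $\bsa\in\X$. By Riesz representation, $\mathrm{b}_{v,\bsa}^2 = \|\eta\|_K^2$, and the claim follows.

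The main obstacle is the first step: rigorously justifying the termwise computation of $I(\Psi_{v,\bsa}f)$ on the infinite ANOVA series and showing that contributions from $u$ with $u\cap v\neq\emptyset$ really cancel. This rests on the boundedness of $\Psi_{v,\bsa}$ from Lemma \ref{Analogon-Lemma7}, on $\bsa\in\X$, and on the ANOVA cancellation (\ref{anova-int}). Once this accounting is in place, the identification of the Riesz representer $\eta$ and the norm computation through Lemma \ref{Lemma6} are standard RKHS manipulations.
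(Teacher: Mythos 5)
Your proof is correct and follows essentially the same route as the paper's: both arguments identify the Riesz representer of the error functional $f\mapsto I(f)-I(\Psi_{v,\bsa}f)$ as $-\sum_{\emptyset\neq u\subset\N\setminus v}\gamma_u k_u(\cdot,\bsa)$ by using the cancellation (\ref{anova-int}) to kill every term with $u\cap v\neq\emptyset$, and then evaluate its squared norm via (\ref{norm-formel}) to get $\sum_{\emptyset\neq u\subset\N\setminus v}\gamma_u k_u(\bsa,\bsa)$. The only (presentational, not substantive) difference is that the paper computes the representer directly by applying $I\circ\Psi_{v,\bsa}$ to the kernel sections $K(\cdot,\bsx)$, whereas you evaluate the functional termwise on the ANOVA expansion of a general $f$ and then read off the representer.
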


\begin{proof}
Let $h_{v,\bsa}$ denote the representer of $I\circ \Psi_{v,\bsa}$ in $H(K)$, i.e.,
\begin{equation*}
h_{v,\bsa}(\bsx) = \int_{D^v} K(\bsx,(\bsy_v;\bsa))\, \rho^v({\rm d}\bsy_v)
=  \sum_{u\in\mathcal{U}} \gamma_u \int_{D^v} k_u(\bsx,(\bsy_v;\bsa))\,
\rho^v({\rm d}\bsy_v).
\end{equation*}
Due to (\ref{anova-int}) the last integral is zero if $v\cap u \neq \emptyset$.
Hence
\begin{equation*}
h_{v,\bsa}(\bsx)
=  \sum_{u \subset \N\setminus v} \gamma_u k_u(\bsx, \bsa).
\end{equation*}
Due to (\ref{h=1}) we have
\begin{equation*}
\begin{split}
{\rm b}^2_{v,\bsa} = &\sup_{f\in B(K)} | I(f) - I(\Psi_{v,\bsa}f)|^2  = \|h-h_{v,\bsa}\|^2_K
= \bigg\| \sum_{\emptyset \neq u \subset \N\setminus v}\gamma_u k_u(\cdot, \bsa) \bigg\|^2_K\\
= &\sum_{\emptyset \neq u \subset \N\setminus v}\gamma_u\left\|k_u(\cdot, \bsa) \right\|^2_{k_u}
= \sum_{\emptyset \neq u \subset \N\setminus v}\gamma_u k_u(\bsa, \bsa).
\end{split}
\end{equation*}
\end{proof}

\subsection{Cost and error}

In this subsetion we present the cost models introduced in \cite{CDMR09}.
Apart from slight generalizations, we essentially follow
the representation in \cite[Sect.~3]{HMGNR10}.

For $\bsx\in D^\N\setminus \X$ we put $f(\bsx) =0$ for all $f\in H(K)$. Let $\$(\nu)$, $\nu\in\N\cup\{0\}$, be a monotone increasing cost function.
Here we will usually assume that $\$(\nu) = O(\nu^s)$ (for upper error
bounds) or $\$(\nu) = \Omega(\nu^s)$ (for lower error bounds), where $s>0$.
(Corresponding results for the case $s=0$ can easily be obtained by taking the limit $s\to 0$; anyhow, we believe that the most interesting case is $s\ge 1$.)

In the \emph{fixed subspace sampling model} function evaluations are
only possible in points from a finite-dimensional affine subspace
\begin{equation*}
 \X_{v,\bsa} := \{ \bsx \in D^\N \,|\, x_j = a_j \hspace{1ex}\text{for all}
\hspace{1ex} j\in\N\setminus v\}
\end{equation*}
of $\X$ for a given $v\in\U$ and an \emph{admissable anchor} $\bsa\in\X$, and the cost for each function
evaluation is given by a cost function
\begin{equation}
\label{fixcost}
 c_{v,\bsa}(\bsx) :=\begin{cases}
\, \$(|v|)
\hspace{2ex}&\text{if $\bsx \in \X_{v,\bsa}$},\\
\, \infty
\hspace{2ex} &\text{otherwise.}
\end{cases}
\end{equation}
In the \emph{variable subspace sampling model}\footnote{To distinguish this cost model
clearly from the more generous one defined in \cite{KSWW10a} it seems to be more accurate to
rename it
``nested subspace sampling model'' as done in \cite{DG12, Gne12}; since here we do not consider the cost model from \cite{KSWW10a}, we stay with the original name.} function evaluations can
be done in a sequence of affine subspaces
\begin{equation*}
 \X_{v_1,\bsa} \subset \X_{v_2,\bsa} \subset \cdots
\end{equation*}
for a strictly increasing sequence $\seqv = (v_i)_{i\in\N}$ of sets
$\emptyset \neq v_i \in \U$ and an admissable anchor $\bsa\in \X$,
and the cost for each function
evaluation is given by the cost function
\begin{equation}
\label{varcost}
 c_{\seqv,\bsa}(\bsx) := \inf\{ \$(|v_i|) \,|\, \bsx\in \X_{v_i,\bsa} \},
\end{equation}
where we use the standard convention that $\inf \emptyset = \infty$.
Let $C_{\fix}$ and $C_{\var}$ denote the set of all cost functions of the
form (\ref{fixcost}) and (\ref{varcost}), respectively.

In general we assume that all $\bsa\in\X$ are admissable anchors, but in
some situation we restrict ourselves
to admissable anchors of the form $\bsa=(a,a,\ldots)\in \X$, $a\in D$, as
done in \cite{HMGNR10}.

We consider randomized algorithms for integration of functions $f\in H(K)$
and, as in \cite{HMGNR10}, refer for a formal definition to \cite{CDMR09, N88, TWW88}.
The cost of an algorithm is defined to be the sum of the cost of all function
evaluations. For a randomized algorithm $Q$ the cost is a random variable,
which may depend on the function $f$. That is why we denote this random
variable by $\cost_c(Q,f)$, where $c$ denotes the relevant cost function from $C_{\fix}$
or $C_{\var}$.

The \emph{worst case cost} of a randomized algorithm $Q$ on a class of integrands $F$ is given
by
\begin{equation*}
 \cost_{\fix}(Q,F) := \inf_{c\in C_{\fix}} \sup_{f\in F}
\EE(\cost_c(Q,f))
\end{equation*}
in the fixed subspace sampling model and by
\begin{equation*}
 \cost_{\var}(Q,F) := \inf_{c\in C_{\var}} \sup_{f\in F}
\EE(\cost_c(Q,f))
\end{equation*}
in the variable subspace sampling model.

The \emph{randomized error} $e(Q,F)$ of approximating the integration functional $I$ by $Q$ on $F$ is defined as
\begin{displaymath}
e(Q,F) := \bigg(\sup_{f \in F} \E \left( \left( I(f) - Q(f) \right)^2 \right) \bigg)^{1/2}\, .
\end{displaymath}
For $N\in\R$ let us define the \emph{$N$th minimal errors} by
\begin{equation*}
 e_{N,\fix}(F) := \inf\{ e(Q,F) \,|\, \cost_{\fix}(Q,F) \le N\}
\end{equation*}
and
\begin{equation*}
 e_{N,\var}(F) := \inf\{ e(Q,F) \,|\, \cost_{\var}(Q,F) \le N\}.
\end{equation*}

\section{Lower bounds}
\label{LOWBOU}

For a fixed anchor $\bsa\in\X$ and a sequence of weights
$(\gamma_u)_{u\in\mathcal{U}}$ satisfying (\ref{summable}) let
$u_1,u_2,\ldots$ be an ordering of the non-empty sets $u\in\mathcal{U}$
with $\gamma_u >0$ for which $\widehat{\gamma}_{u_1} \ge \widehat{\gamma}_{u_2}
\ge\cdots$ holds, where $\widehat{\gamma}_u := \gamma_u \, k_u (\bsa,\bsa)$.
Let $u_0:=\emptyset$. Furthermore, we put
\begin{equation*}
\decay_{\bsgamma} :=
\sup \left\{ p\in \R \,\Big|\, \lim_{j\to\infty}
\widehat{\gamma}_{u_j}j^p =0
\right\}.
\end{equation*}

\subsection{General weights}

The next two lemmas are helpful for establishing lower bounds
for the randomized error of numerical integration.

\begin{lemma}
\label{lemlowbound1} Let $\theta\in (1/2,1]$, $v\in\U$, and  let $Q$ be a randomized algorithm that satisfies
 $\PP\big( Q(f) = Q(\Psi_{v,\bsa}f) \big) \ge \theta$
for all $f\in B(K)$.
Then
\begin{displaymath}
e(Q,B(K)) \ge \max\left\{ \frac{\sqrt{2\theta -1}\, {\rm b}_{v,\bsa}}{1+\|\Psi_{v,\bsa}\|_{K\to K_v}} \,, e(Q,B(K_{v})) \right\} \, .
\end{displaymath}
\end{lemma}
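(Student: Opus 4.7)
The second inequality $e(Q,B(K)) \ge e(Q,B(K_v))$ I would dispose of first, as it is immediate: by Lemmas \ref{Lemma5} and \ref{Lemma6}, for $f \in H(K_v)$ the expansion $f = \sum_{u\subseteq v} f_u$ gives $\|f\|_K^2 = \sum_{u\subseteq v}\gamma_u^{-1}\|f_u\|_{k_u}^2 = \|f\|_{K_v}^2$, so $H(K_v)$ embeds isometrically into $H(K)$, $B(K_v) \subseteq B(K)$, and the randomized error over the larger ball cannot be smaller.

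For the main bound I would run a symmetric Bakhvalov-type argument exploiting the hypothesis that $Q$ fails to distinguish $f$ from $\Psi_{v,\bsa}f$ with probability at most $1-\theta$. Fix $f \in B(K)$, abbreviate $C := \|\Psi_{v,\bsa}\|_{K\to K_v}$, and set
\begin{equation*}
\tilde f := \frac{f - \Psi_{v,\bsa}f}{1+C}.
\end{equation*}
Since $\Psi_{v,\bsa}$ is a projection (Lemma \ref{Analogon-Lemma7}) one has $\Psi_{v,\bsa}\tilde f = 0$, while the triangle inequality together with the definition of $C$ forces $\|\tilde f\|_K \le 1$. Hence both $\tilde f$ and $-\tilde f$ belong to $B(K)$ and have vanishing $\Psi_{v,\bsa}$-projection, so the hypothesis applied to each produces events $A_{\pm} := \{Q(\pm\tilde f) = Q(0)\}$ each of probability at least $\theta$, and a union bound gives $\mathbb{P}(A_+ \cap A_-) \ge 2\theta - 1$.

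On $A_+ \cap A_-$ the random variables $Q(\tilde f)$ and $Q(-\tilde f)$ share a common value $R$, so the elementary inequality $a^2 + b^2 \ge (a-b)^2/2$ applied with $a = I(\tilde f) - R$ and $b = I(-\tilde f) - R$ yields
\begin{equation*}
(I(\tilde f)-Q(\tilde f))^2 + (I(-\tilde f)-Q(-\tilde f))^2 \ge \tfrac{1}{2}(I(\tilde f) - I(-\tilde f))^2 = 2\,I(\tilde f)^2,
\end{equation*}
where linearity of $I$ was used. Integrating this over $A_+ \cap A_-$, dropping the indicator, and using $\tilde f, -\tilde f \in B(K)$ produces $2 e(Q,B(K))^2 \ge 2(2\theta-1)\,I(\tilde f)^2$, which rearranges to $e(Q,B(K)) \ge \sqrt{2\theta-1}\,|I(f)-I(\Psi_{v,\bsa}f)|/(1+C)$; taking the supremum over $f \in B(K)$ and invoking Lemma \ref{Bias-Estimate} finishes the proof.

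The main subtlety, and what I expect to be the only real obstacle, is the use of the \emph{symmetric} pair $\pm\tilde f$: comparing $\tilde f$ only with the zero function would leave an extra factor of $1/2$ and destroy the sharp constant, whereas the two-sided comparison doubles $|I(\tilde f) - I(-\tilde f)|$ and exactly cancels the $\tfrac{1}{2}$ coming from the elementary inequality. The normalization $1/(1+C)$ is dictated by the triangle bound $\|f - \Psi_{v,\bsa}f\|_K \le 1 + C$, which is generally sharp and therefore accounts for the denominator in the claimed bound.
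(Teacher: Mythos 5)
Your proof is correct and follows essentially the same route as the paper: the same normalized difference $g=(f-\Psi_{v,\bsa}f)/(1+\|\Psi_{v,\bsa}\|_{K\to K_v})$, the same symmetric pair $\pm g$ with $\Psi_{v,\bsa}(\pm g)=0$ and the event of probability $\ge 2\theta-1$ on which $Q(g)=Q(-g)$, and the same inclusion $B(K_v)\subseteq B(K)$ for the second term. The only cosmetic difference is that the paper bounds the maximum of the two expectations while you bound their sum; both yield the identical constant.
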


\begin{proof}
The proof adapts the proof idea from \cite[Lemma~8]{HMGNR10}.
Put
$\hat{r} := \| \Psi_{v,\bsa} \|_{K\to K_v}$.
Then we have for $f \in B(K)$,
\begin{displaymath}
g= \frac{f - \Psi_{v,\bsa} (f)}{1+\hat{r}} \in B(K) \, .
\end{displaymath}
Furthermore, we have $\Psi_{v,\bsa}(g) = \Psi_{v,\bsa}(-g) =0$.
Let $A$ denote the event $\{Q(g) = Q(-g) \}$. Then $\PP(A) \ge 2\theta -1$.
Hence
\begin{equation*}
 \begin{split}
&e(Q,B(K))^2 \geq
\max \left\{ \EE \left( \left( I(g) - Q( g) \right)^2 \right) ,
\EE \left( \left(  I(-g) - Q(-g)  \right)^2 \right) \right\} \\
\geq
&\max \left\{ \int_A \left( I(g) - Q( g) \right)^2
\,\PP({\rm d}\omega),
\int_A \left(  I(-g) - Q(-g) ) \right)^2  \,\PP({\rm d}\omega) \right\}\\
\geq &(2\theta -1) \vert I(g) \vert^2 = (2\theta-1)(1+\hat{r})^{-2} \vert I(f) - I(\Psi_{v,\bsa}(f)) \vert^2 \, .
\end{split}
\end{equation*}
Since $B \left( K_{v} \right) \subseteq B(K)$, we have additionally
$e \left( Q, B(K) \right) \geq e \left( Q, B(K_{v}) \right)$.
\end{proof}

We provide now a general lower bound for the randomized error
of arbitrary randomized algorithms and arbitrary weights.
\begin{theorem}
 \label{GenLowBou}
Assume that $\$(\nu) = \Omega(\nu^s)$ for some $s>0$ and that there exists a
$p>1$ and a $\sigma>0$ such that
\begin{equation*}
 \frac{{\rm b}_{v,\bsa}}{1+\|\Psi_{v,\bsa}\|_{K\to K_v}}
= \Omega \left( |v|^{\frac{ \sigma (1-p) }{2}} \right)
\hspace{2ex}\text{for all $v\in\U$.}
\end{equation*}
Assume further that $\gamma_{\{1\}} >0$ and that there exists
an $\alpha >0$ with $e_N(B(K_{\{1\}}))^2 = \Omega(N^{-\alpha})$.
Then we have for fixed subspace sampling
\begin{equation*}
e_{N,\fix}(B(K))^2 = \Omega \left( N^{-\frac{\alpha \sigma (p-1)}{\alpha s+ \sigma (p -1) }} \right),
\end{equation*}
and for variable subspace sampling
\begin{equation*}
e_{N,\var}(B(K))^2 = \Omega \left( N^{-\min \left\{ \alpha, \frac{ \sigma ( p-1 ) }{s} \right\}} \right).
\end{equation*}
\end{theorem}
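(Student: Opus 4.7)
The plan is to combine both lower bounds provided by Lemma \ref{lemlowbound1}: the bias-type bound of order ${\rm b}_{v,\bsa}/(1+\|\Psi_{v,\bsa}\|_{K\to K_v})$ and the inherited bound $e(Q,B(K_v))$, then balance them by choosing the distinguishing subspace $v$ optimally. For any algorithm $Q$ with cost budget $N$ I will first identify (depending on the cost model) a subspace $v\in\U$ such that $Q(f) = Q(\Psi_{v,\bsa}f)$ holds with probability at least some $\theta > 1/2$, while $|v|$ is bounded in terms of $N$. The hypothesis $\gamma_{\{1\}} > 0$ lets me reduce to the case $1 \in v$: when $1\notin v$, the algorithm sees only the constant value $f_\emptyset + f_{\{1\}}(a_1)$ on $B(K_{\{1\}})$, and a two-point argument (trading $f_\emptyset$ against $f_{\{1\}}$ with opposite signs) yields an $N$-independent constant lower bound that is stronger than any rate bound we aim to prove. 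Assuming $1\in v$ we have $B(K_{\{1\}})\subseteq B(K_v)$, so $e(Q,B(K_v))^2 \ge e_n(B(K_{\{1\}}))^2 = \Omega(n^{-\alpha})$, where $n$ denotes the (expected) number of evaluations $Q$ performs under the cost budget.

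For the fixed subspace sampling model, the subspace $v$ is deterministic and independent of $f$, and all evaluations lie in $\X_{v,\bsa}$, so Lemma \ref{lemlowbound1} applies with $\theta=1$. From $\$(|v|)=\Omega(|v|^s)$ the number of evaluations satisfies $n\le N/\$(|v|) = O(N|v|^{-s})$, yielding the two competing bounds
\begin{equation*}
 e(Q,B(K))^2 \ge c_1\,|v|^{\sigma(1-p)}
\quad\text{and}\quad
 e(Q,B(K))^2 \ge c_2\,|v|^{s\alpha}\, N^{-\alpha}.
\end{equation*}
The first is decreasing and the second increasing in $|v|$, so balancing produces the optimum $|v|\asymp N^{\alpha/(s\alpha+\sigma(p-1))}$, which reproduces the claimed rate $N^{-\alpha\sigma(p-1)/(\alpha s + \sigma(p-1))}$.

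For the variable subspace sampling model, the chain $\X_{v_1,\bsa}\subset\X_{v_2,\bsa}\subset\cdots$ may depend on the algorithm (and on $\omega$) but not on $f$. Each sample falling outside $\X_{v_i,\bsa}$ costs at least $\$(|v_{i+1}|)$, so Markov's inequality applied to the random variable counting such samples shows that the probability that all evaluations on $f$ lie in $\X_{v_i,\bsa}$ is at least $1-N/\$(|v_{i+1}|)$, and on that event $Q(f) = Q(\Psi_{v_i,\bsa}f)$. I take $i^\ast$ to be the smallest $i$ with $\$(|v_{i^\ast+1}|)\ge 4N$; this yields $\theta\ge 3/4$, and by minimality combined with $\$(\nu)=\Omega(\nu^s)$ we obtain $|v_{i^\ast}|=O(N^{1/s})$. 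Lemma \ref{lemlowbound1} with $v=v_{i^\ast}$ then gives the bias-type bound $e(Q,B(K))^2 = \Omega(N^{-\sigma(p-1)/s})$, while the estimate $n = O(N)$ (from the positive minimum per-evaluation cost $\$(|v_1|)\ge\$(1)$) combined with the one-dimensional lower bound gives $e(Q,B(K))^2 = \Omega(N^{-\alpha})$. The maximum of the two is the stated $\Omega(N^{-\min\{\alpha,\sigma(p-1)/s\}})$.

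The main technical obstacle will be the adaptive Markov argument for variable sampling: one needs to make rigorous that even with an adaptively chosen chain $(v_i)$ and an adaptive sampling strategy, a worst-case expected cost of at most $N$ permits applying Lemma \ref{lemlowbound1} at a single well-chosen level $v_{i^\ast}$ with $\theta = 3/4$, uniformly over adversarial $f \in B(K)$. A secondary bookkeeping point is that the one-dimensional lower bound $e_n(B(K_{\{1\}}))^2 = \Omega(n^{-\alpha})$ must be transferred into our cost model by identifying the effective evaluation budget $n = N/\$(|v|)$ in the fixed case, and $n = O(N)$ in the variable case.
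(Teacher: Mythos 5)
Your proposal is correct and follows essentially the same route as the paper's proof: Lemma \ref{lemlowbound1} applied to a single well-chosen subspace (deterministic $v$ with $\theta=1$ in the fixed model; the largest level $v_m$ with $\$(|v_m|)\le 4(N+1)$, found via Markov's inequality with $\theta=3/4$, in the variable model), combined with the one-dimensional bound $e_n(B(K_{\{1\}}))^2=\Omega(n^{-\alpha})$ and the cost bound $n=O(N/\$(|v|))$ resp.\ $n=O(N)$, then balanced over $|v|$. The only deviation is your separate two-point argument for the case $1\notin v$, which is harmless but unnecessary, since $B(K_{\{1\}})\subseteq B(K)$ holds regardless of $v$ and the minimal-error hypothesis on $B(K_{\{1\}})$ already lower-bounds any algorithm using $n$ evaluations wherever they are placed.
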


\begin{proof}
Let $Q$ be a randomized algorithm.
In the fixed subspace sampling regime our proof is a slight modification
of the proof of \cite[Thm.~2]{HMGNR10}:
If $\cost_{\fix}(Q,B(K)) \le N$, then
there exists a set $v\in\U$ and an anchor $\bsa\in \X$ such that
$\EE(\cost_{c_{v,\bsa}}(Q,f)) \le N+1$ for every $f\in B(K)$.
This implies for every $f\in B(K)$ that
$\PP( Q(f) = Q(\Psi_{v,\bsa}\,f)) =1$. Due to Lemma \ref{lemlowbound1}
we get $e(Q,B(K))^2 = \Omega(|v|^{\sigma (1-p)})$.

The expected number of evaluations
of $Q$ is at most of order $O(N/|v|^s)$. Thus we have
\begin{equation*}
 e(Q,B(K))^2 \ge e(Q,B(K_{\{1\}}))^2 = \Omega \left(\left( \frac{N}{|v|^s}
\right)^{-\alpha} \right).
\end{equation*}
Now it is easily verified that
\begin{equation*}
 \left( \frac{N}{|v|^s} \right)^{-\alpha} + |v|^{\sigma (1-p) }
= \Omega \left( N^{-\frac{\alpha \sigma (p-1)}{\alpha s+ \sigma (p -1 ) }} \right).
\end{equation*}
Let us turn to the variable subspace sampling regime: If $\cost_{\var}(Q,B(K)) \le N$, then there exists an increasing sequence $\seqv = (v_i)_{i\in\N}$,
$\emptyset \neq v_i\in \U$, and an anchor $\bsa \in \X$ such that
$\EE(\cost_{c_{\seqv,\bsa}}(Q,f)) \le N+1$ for every $f\in B(K)$. Let $m$ be
the largest integer satisfying $\$(|v_m|) \le 4(N+1)$. That implies for all
$f\in B(K)$ that $\PP( Q(f) = Q(\Psi_{v_m,\bsa}\,f)) \ge 3/4$. Due to Lemma \ref{lemlowbound1} we get $e(Q,B(K))^2 = \Omega(|v_m|^{ \sigma (1-p ) })$.
Since $4(N+1) = \Omega(|v_m|^s)$, we obtain
$e(Q,B(K))^2 = \Omega(N^{\frac{ \sigma (1-p)  }{s}})$. Furthermore, we have
\begin{equation*}
 e(Q,B(K))^2 \ge e(Q,B(K_{\{1\}}))^2 = \Omega(N^{-\alpha}).
\end{equation*}
This concludes the proof.
\end{proof}

\subsection{Finite-intersection and product weights}

As already discussed in Remark \ref{interestingQ}, for product weights
the operator norm
$\|\Psi_{v,\bsa}\|_{K\to K_v}$ is uniformly bounded in $v\in\U$,
and the same holds true for finite-intersection weights $\bsgamma$
as long as the anchor $\bsa\in\X$ has identical entries $a\in D$ and the
monotonicity condition (\ref{monoton}) is satisfied.
\begin{lemma}
\label{lemlowboundAva}
Let $\bsgamma$ be finite-intersection
or product weights, and let $p>\decay_{\bsgamma}$. Let $v\in\U$. Then we have
${\rm b}^2_{v,\bsa} = \Omega(\vert v \vert^{1-p})$.
\end{lemma}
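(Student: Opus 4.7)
The plan is to apply Lemma~\ref{Bias-Estimate} to rewrite
\[
b^2_{v,\bsa}=\sum_{\emptyset\neq u\subset\N\setminus v}\widehat{\gamma}_u,
\]
then bound this below by a tail of the non-increasingly ordered sequence $(\widehat{\gamma}_{u_j})_{j\ge 1}$, and finally read off the claimed lower bound from the decay exponent $\decay_{\bsgamma}$.

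For finite-intersection weights I would use condition~(\ref{cond}): there is $\eta\in\N$ such that every $k\in\N$ belongs to at most $\eta$ sets $u$ with $\gamma_u>0$. Writing $n:=|v|$, this means at most $\eta n$ of the $u_j$ with $\gamma_{u_j}>0$ satisfy $u_j\cap v\neq\emptyset$, and non-increasingness of the ordering then yields
\[
b^2_{v,\bsa}\ge\sum_{j>\eta n}\widehat{\gamma}_{u_j}.
\]
For product weights I would instead use the factorization
\[
b^2_{v,\bsa}=\prod_{j\in\N\setminus v}(1+\hat\gamma_j)-1\ge\sum_{j\in\N\setminus v}\hat\gamma_j\ge\sum_{j>n}\hat\gamma_{(j)},
\]
where $\hat\gamma_j:=\gamma_j k(a_j,a_j)$ and $(\hat\gamma_{(j)})$ denotes the decreasing reordering of the singleton weights. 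Since~(A~4) forces $\hat\gamma_j\to 0$, the top of the global ordering $(\widehat{\gamma}_{u_j})$ coincides up to a bounded-index shift with the singleton ordering, so $(\hat\gamma_{(j)})$ has the same decay exponent $\decay_{\bsgamma}$.

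Both cases then reduce to the following tail estimate: for a non-increasing, non-negative, summable sequence $(a_j)$ with decay exponent $p_0$ and any $p>p_0$, $\sum_{j>N}a_j=\Omega(N^{1-p})$. I would argue by contradiction: if $T(N):=\sum_{j>N}a_j=o(N^{1-p})$ along a subsequence $N_k$, then the elementary bound $T(N)\ge N a_{2N}$ (valid because $a_j\ge a_{2N}$ for $j\in (N,2N]$) gives $a_{2N_k}(2N_k)^p=o(1)$, and monotonicity of $(a_j)$ propagates this small-value information across dyadic windows; iterating along a sufficiently dense cover of $\N$ forces $\lim_j a_j j^p=0$ and contradicts $p>p_0=\decay$. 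Substituting $N=\eta n$ in the finite-intersection case and $N=n$ in the product case yields $b^2_{v,\bsa}=\Omega(n^{1-p})$.

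The main obstacle is the last tail estimate, since an arbitrary non-increasing sequence can drop sharply at isolated indices and stay flat afterwards, making it delicate to extend sub-sequential small-value information to a full-range lower bound on $T$. The essential tools are the monotonicity of both $(a_j)$ and $T$, combined with a dyadic covering argument that exploits the regularity imposed on the weight sequence by either the finite-intersection structure or the product structure to rule out pathological oscillations.
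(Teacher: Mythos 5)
Your reduction in the finite-intersection case is exactly the paper's: Lemma~\ref{Bias-Estimate} together with condition~(\ref{cond}) gives ${\rm b}^2_{v,\bsa}\ge\sum_{j>\eta|v|}\widehat{\gamma}_{u_j}$, and your block inequality $T(N)\ge N a_{2N}$ is precisely the paper's next step, $\sum_{j=\eta|v|+1}^{2\eta|v|}\widehat{\gamma}_{u_j}\ge\eta|v|\,\widehat{\gamma}_{u_{2\eta|v|}}$. The proofs diverge at the very last step, and that is where your argument has a genuine gap. The paper simply invokes the pointwise bound $\widehat{\gamma}_{u_j}=\Omega(j^{-p})$ and is done in one line; you instead try to \emph{derive} this from the definition of $\decay_{\bsgamma}$ by contradiction. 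But $p>\decay_{\bsgamma}$ only gives $\lim_j\widehat{\gamma}_{u_j}j^{p}\neq 0$, i.e.\ $\limsup_j\widehat{\gamma}_{u_j}j^{p}>0$, and your hypothetical bad subsequence yields $a_{2N_k}(2N_k)^p\to 0$, which is a statement about a $\liminf$ and does not contradict a positive $\limsup$. Monotonicity transfers the smallness of $a_{2N_k}$ only to indices $j$ with $2N_k\le j\le C\cdot 2N_k$ for a fixed factor $C$ (there $a_jj^p\le C^p a_{2N_k}(2N_k)^p$); if the bad subsequence is sparse, these windows do not cover $\N$ and no iteration closes the holes. The implication you would need is in fact false for general non-increasing summable sequences: with $j_{k+1}=2^{j_k}$, $a_{j_k}=j_k^{-2-1/k}$ and $a_j=a_{j_{k+1}}$ for $j_k<j\le j_{k+1}$, one checks $\decay=2$ while $T(j_k)\approx j_{k+1}^{-1}\ll j_k^{1-p}$ for $p=3$. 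So the obstacle you flag at the end is real and cannot be removed by a dyadic covering argument; the missing ingredient is the uniform lower bound $\widehat{\gamma}_{u_j}\ge c\,j^{-p}$, which is the reading of the decay hypothesis the paper actually uses (strictly more than the literal $\lim$-definition of $\decay_{\bsgamma}$ provides, but it is the intended input, and with it the lemma is a two-line computation).

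For product weights the paper does not argue in-house but cites \cite[p.~243]{HMGNR10}. Your alternative reduction via ${\rm b}^2_{v,\bsa}=\prod_{j\notin v}(1+\hat\gamma_j)-1\ge\sum_{j\notin v}\hat\gamma_j\ge\sum_{j>|v|}\hat\gamma_{(j)}$ is a sensible start, but the claim that the global ordering of $(\widehat{\gamma}_u)_{u\in\U}$ agrees with the singleton ordering ``up to a bounded-index shift'' is not correct: already for $\gamma_j=j^{-\beta}$ the two-element sets contribute a logarithmic factor to the counting function $\#\{u:\widehat{\gamma}_u\ge t\}$, so identifying the two decay exponents requires an actual argument (the conclusion is true, but not for the reason you give). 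In short: same skeleton as the paper, but the final tail estimate is not proved, and as stated it cannot be proved from the $\limsup$ information alone.
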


\begin{proof}
Let $\bsgamma$ be finite-intersection weights.
Let $\eta$ be as in condition (\ref{cond}).
Note that the set $\left\{ i| u_i \cap v \neq \emptyset \right\}$ contains at most $\eta \vert v \vert$ elements. Hence
\begin{equation*}
 \begin{split}
{\rm b}^2_{v,\bsa} &= \sum_{\emptyset \neq u \subset\nn \setminus v} \gamma_{u}
k_u(\bsa,\bsa) = \sum_{\stackrel{j\in\N:}{\emptyset \neq u_j \subset \nn \setminus v}} \gamma_{u_j} k_{u_j}(\bsa,\bsa) = \sum_{\stackrel{j\in\N:}{j \not\in \left\{  i | u_i \cap v \neq \emptyset \right\}}} \gamma_{u_j} k_{u_j}(\bsa,\bsa) \\
&\ge \sum^{\infty}_{j= \vert v \vert \eta +1} \gamma_{u_j} k_{u_j}(\bsa,\bsa)
\geq \sum^{2 \vert v \vert \eta }_{j= \vert v \vert \eta +1} \gamma_{u_j} k_{u_j}(\bsa,\bsa) \geq \vert v \vert \eta \widehat{\gamma}_{u_{2 \vert v \vert \eta}} = \Omega \left( \vert v \vert^{1-p} \right) \, .
 \end{split}
\end{equation*}
In the case of product weights, ${\rm b}^2_{v,\bsa} = \Omega(\vert v \vert^{1-p})$ was proved in \cite[p.~243]{HMGNR10}.
\end{proof}

\begin{remark}
 The statement of Lemma \ref{lemlowboundAva} does not hold for
arbitrary weights, as shown by the following example: Consider
weights $(\gamma_u)_{u\in\U}$ defined by $\gamma_u >0$ if
$u=[d]$ for some $d\in \N$ and $\gamma_u=0$ otherwise.
Then
\begin{equation*}
 {\rm b}^2_{v,\bsa} = \sum_{\emptyset \neq u \subset \N\setminus v}
\gamma_u k_u(\bsa,\bsa) = 0
\hspace{2ex}\text{for all $1\in v\in \U$.}
\end{equation*}
\end{remark}

Remark \ref{interestingQ}, Lemma \ref{lemlowboundAva}, and Theorem \ref{GenLowBou} lead directly to the following lower bounds for finite-intersection and
product weights. Note that for $s=1$ the lower bound for product weights in
the fixed subspace sampling model has already been proved in \cite[Thm.~2]{HMGNR10}.

\begin{corollary} \label{lowboundFIW}
Let $\bsgamma$ be finite-intersection weights  or
product weights. In the case of finite-intersection weights we additionally
assume that the weights satisfy the monotonicity
condition (\ref{monoton}) and  $\gamma_{\{1\}} >0$, and that all admissable anchors $\bf{a} \in\X$ for fixed or
variable subspace sampling are of the form
$\bsa = (a,a,\ldots)$ for some suitable $a\in D$. Let
$\$(\nu) = \Omega(\nu^s)$ for some $s>0$, and let
$p> \decay_{\bsgamma}$.
Assume further that there exists
an $\alpha >0$ with $e_N(B(K_{\{1\}}))^2 = \Omega(N^{-\alpha})$. Then
we have for fixed subspace sampling
\begin{displaymath}
e_{N,\fix}(B(K))^2 =
\Omega \left( N^{-\frac{\alpha(p-1)}{\alpha s +p-1}} \right) \,,
\end{displaymath}
and for variable subspace sampling
\begin{displaymath}
e_{N,\var}(B(K))^2 = \Omega \left( N^{-\min \left\{ \alpha, \frac{p-1}{s} \right\}} \right) \,.
\end{displaymath}
\end{corollary}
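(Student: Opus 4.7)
The plan is to deduce the corollary by a direct specialization of Theorem \ref{GenLowBou} to the case $\sigma = 1$, with the two hypothesis-checking steps supplied by Remark \ref{interestingQ} and Lemma \ref{lemlowboundAva}. Most of the hypotheses of Theorem \ref{GenLowBou} ($\gamma_{\{1\}} > 0$, $\$(\nu) = \Omega(\nu^s)$, and the polynomial lower bound on $e_N(B(K_{\{1\}}))$) are assumed in the statement, so the only content-bearing reductions are the uniform boundedness of the projection norms and the normalized bias bound.

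First I would invoke Remark \ref{interestingQ} to produce a constant $C > 0$, depending only on the weights and on $k(a,a)$, such that
\begin{equation*}
\|\Psi_{v,\bsa}\|_{K\to K_v} \le C
\hspace{2ex}\text{for all $v\in \U$.}
\end{equation*}
For product weights satisfying (\ref{summable}) this is the first assertion of that remark. For finite-intersection weights the remark proves this estimate using the intersection-degree bound on $r^2_{v,u,\bsa}/\gamma_u$, but only under the two extra hypotheses the corollary imposes, namely that the anchor has identical entries $a \in D$ and that the monotonicity condition (\ref{monoton}) holds; these are exactly the assumptions ensuring that $\gamma_u^{-1}\gamma_{u\cup w} \le 1$ in the estimate from the remark. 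Next I would apply Lemma \ref{lemlowboundAva} to obtain ${\rm b}^2_{v,\bsa} = \Omega(|v|^{1-p})$ for any $p > \decay_{\bsgamma}$ (valid for both weight classes). Combining these two facts yields
\begin{equation*}
\frac{{\rm b}_{v,\bsa}}{1 + \|\Psi_{v,\bsa}\|_{K\to K_v}} \;\ge\; \frac{{\rm b}_{v,\bsa}}{1+C} \;=\; \Omega\!\left(|v|^{(1-p)/2}\right),
\end{equation*}
which is precisely the hypothesis of Theorem \ref{GenLowBou} with $\sigma = 1$.

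Substituting $\sigma = 1$ into the conclusions of Theorem \ref{GenLowBou} then gives
\begin{equation*}
e_{N,\fix}(B(K))^2 = \Omega\!\left( N^{-\frac{\alpha(p-1)}{\alpha s + p - 1}} \right)
\hspace{2ex}\text{and}\hspace{2ex}
e_{N,\var}(B(K))^2 = \Omega\!\left( N^{-\min\{\alpha,(p-1)/s\}} \right),
\end{equation*}
which are exactly the asserted bounds. The only subtle point in the proof is aligning the finite-intersection hypotheses of Remark \ref{interestingQ} with the assumptions of the corollary: that remark explicitly exhibits a summable finite-intersection weight sequence for which the operator norms $\|\Psi_{v,\bsa}\|_{K\to K_v}$ blow up, so the restriction to monotone weights and to anchors of the form $(a,a,\ldots)$ is not cosmetic but essential. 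Once this is observed, the argument is a purely mechanical assembly of the three cited results and requires no further computation.
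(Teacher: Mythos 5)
Your proposal is correct and follows exactly the route the paper intends: the paper states that Remark \ref{interestingQ}, Lemma \ref{lemlowboundAva}, and Theorem \ref{GenLowBou} "lead directly" to the corollary, and your assembly — uniform boundedness of $\|\Psi_{v,\bsa}\|_{K\to K_v}$ from the remark (using monotonicity and the constant anchor in the finite-intersection case), the bias bound ${\rm b}^2_{v,\bsa}=\Omega(|v|^{1-p})$ from the lemma, and substitution of $\sigma=1$ into the theorem — is precisely that argument, with the exponents matching. Your observation that the restriction on anchors and the monotonicity condition are essential (in view of the counterexample in Remark \ref{interestingQ}) is a correct and worthwhile clarification of why the corollary's extra hypotheses are there.
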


\section{Multilevel Algorithm} \label{secmultilevelalg}

In this section, we discuss multilevel algorithms, firstly in generality mostly relying on \cite{G10}, and subsequently show how to tailor them to finite-intersection and product weights.

\subsection{General weights} \label{subsecgenweights}

Let us describe the general form of the multilevel algorithms we want to use more precisely:
Let $L_0:=0$, let $L_1<L_2<L_3 <\ldots$ be natural numbers,
and let
\begin{equation}
\label{vk12}
v^{(1)}_k := \cup_{j\in [L_k]}^{} u_j
\hspace{2ex}\text{and}\hspace{2ex}
v^{(2)}_k := [L_k]
\hspace{2ex}\text{for $k\in\N$}.
\end{equation}
For general weights we will use the sets
$v^{(1)}_k$, $k=1,2,\ldots$.
In special cases as, e.g., for product weights or the lexicographically-ordered
weights defined in \cite{G10}, it is more convenient to
make use of the special ordering of the corresponding set system $u_j$, $j\in\N$,
and choose
the sets $v^{(2)}_k$ for $k=1,2,\ldots$. In all definitions and results that hold for both choices
of the $v_k^{(i)}$, $i=1,2$, we simply write $v_k$. Put $v_0:=\emptyset$.
We will choose the numbers $L_1,L_2,\ldots$ in general such that
$|v_k| = \Theta(b^{k})$ for some $b\in (1,\infty)$.
(Here a default choice would be $L_k = 2^{k-1}$.) Let
\begin{equation*}
V_1 := \{j\in\N\,|\, u_j\subseteq v_1\}
\end{equation*}
and
\begin{equation*}
V_k := \{j\in\N \,|\,  u_j\subseteq v_k
\hspace{1ex}\text{and}\hspace{1ex} u_j\not\subseteq v_{k-1}\}
\hspace{2ex}\text{for $k\ge 2$.}
\end{equation*}
Let us furthermore define
\begin{equation*}
U(m) := \cup_{k=1}^m V_k \cup \{0\}
\hspace{2ex}\text{and}\hspace{2ex}
\tail_{\bsgamma} (m):= \sum_{j\notin U(m)}^\infty
\widehat{\gamma}_{u_j},
\end{equation*}
where the weights $\widehat{\gamma}_{u_j}$ are defined as in Subsection
\ref{LOWBOU}.
Let us fix an anchor $\bsa\in\X$.
We use the short hands $\Psi_0 := 0$,
\begin{equation*}
\Psi_k := \Psi_{v_k,\bsa}
\hspace{2ex}\text{and}\hspace{2ex}
{\rm b}^2_{k} := {\rm b}^2_{v_k,\bsa},
\hspace{2ex}\text{for $k=1,2,\ldots$,}
\end{equation*}
as well as
$$
f^+_{u,k} := f^+_{u,v_k}
\hspace{2ex}\text{and}\hspace{2ex}
f^-_{u,k} := f^-_{u,v_{k-1},v_k}.
$$
Furthermore, let
\begin{equation*}
 r_{k,u}:= r_{v_k,u,\bsa}
\hspace{2ex}\text{and}\hspace{2ex}
\tilde{r}_{k,u} := \tilde{r}_{v_k,v_{k-1},u,\bsa}.
\end{equation*}
For natural numbers $n_1 \ge n_2 \ge \cdots \ge n_m$, we consider randomized
algorithms $Q_{v_k,\bsa}$ of the form
\begin{equation}
\label{algobaustein}
Q_{v_k,\bsa}(f)
:= \sum_{j=1}^{n_k} w_{j,k}
f(\bst^{(j,k)}_{v_{k}}; \bsa),
\hspace{2ex} w_{j,k}\in \R,\,\bst^{(j,k)}_{v_{k}} \in D^{v_k},
\end{equation}
that satisfy (\ref{summe=1}) and condition (*) of Lemma \ref{ANOVA} for
$\mathcal{V} = \{u\subseteq v_k \,|\, \gamma_u >0\}$.
We use additionally the shorthand
\begin{equation}
Q_k(f) := Q_{v_k,\bsa}(f - \Psi_{k-1}f).
\end{equation}
Define the \emph{randomized multilevel algorithm} $Q$ via
\begin{equation}
\label{multilevel-algo}
Q(f)
:= \sum_{k=1}^m Q_k(f)
= \sum_{k=1}^m \sum_{j=1}^{n_k} w_{j,k}
(f - \Psi_{k-1}f) (\bst^{(j,k)}_{v_{k}};\bsa),
\end{equation}
where the random variables $Q_k(f)$, $k=1,\ldots, m$, are supposed to be independent.

Since $\E(Q_{v_k,\bsa}(f)) = I(\Psi_kf)$ for all
$k$, see Remark \ref{Unbiased}, we have $\E(Q(f)) = I(\Psi_m(f))$. Thus
$Q(f)$ is an \emph{unbiased estimator} of $I(\Psi_m(f))$, and we obtain
\begin{equation}
\label{rand-case-error}
\E(I(f) - Q(f))^2 = (I(f) - I(\Psi_m f))^2 +\Var(Q(f)).
\end{equation}
Since $Q_k(f)$, $k=1,\ldots,m$, are independent random variables,
we have the following identity for the variance of $Q(f)$:
\begin{equation}
\label{variation}
\Var(Q(f)) = \sum^m_{k=1} \Var(Q_{k}(f)).
\end{equation}

\begin{lemma}
\label{Var-ANOVA}
For all $f\in H(K)$ and all $k \in [m]$ we have
\begin{equation}
\label{varidentity}
\Var(Q_k(f)) =   \sum_{j\in V_k}
\Var ( Q_{v_k,\bsa} ( \Psi_k (
f^+_{u_j,k} ) ) ) + \sum_{\emptyset \neq u\subseteq v_{k-1}}
\Var ( Q_{v_k,\bsa} ( \Psi_{k-1} (
f^-_{u,k} ) ) ) .
\end{equation}
\end{lemma}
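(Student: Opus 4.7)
My plan is to identify the terms on the right-hand side of (\ref{varidentity}) as variances of distinct ANOVA components of the $L^2(\Omega^{v_k},\PP^{v_k})$-random variable $Q_k(f)$, so that the desired equality follows from orthogonality of the ANOVA decomposition. As a preliminary, I would note that $Q_{v_k,\bsa}$ evaluates its argument only at points of $\X_{v_k,\bsa}$, whence $Q_{v_k,\bsa}\circ\Psi_k=Q_{v_k,\bsa}$; combined with $\Psi_k\Psi_{k-1}=\Psi_{k-1}$ (which holds because $v_{k-1}\subset v_k$), this yields $Q_k(f)=Q_{v_k,\bsa}\bigl((\Psi_k-\Psi_{k-1})f\bigr)$.

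Next I would apply Lemma \ref{Psi-Anova} with $v=v_{k-1}$ and $w=v_k$ to split $(\Psi_k-\Psi_{k-1})f$ into a difference of two sums, and reindex the first of them via the bijection $(u,u')\mapsto u\cup u'$ between pairs with $u\subseteq v_{k-1}$, $\emptyset\neq u'\subseteq v_k\setminus v_{k-1}$ (restricted to $\gamma_{u\cup u'}>0$, with zero-weight pairs discarded under the relevant weight-monotonicity assumption) and $\{u_j:j\in V_k\}$. By linearity of $Q_{v_k,\bsa}$ this gives
$$
Q_k(f)=\sum_{j\in V_k}Q_{v_k,\bsa}\bigl(\Psi_k(f^+_{u_j,k})\bigr)-\sum_{u\subseteq v_{k-1}}Q_{v_k,\bsa}\bigl(\Psi_{k-1}(f^-_{u,k})\bigr).
$$
The $u=\emptyset$ piece of the second sum is deterministic: $\Psi_{k-1}(f^-_{\emptyset,k})$ is constant (each summand $f_{u'}$ with $u'\subset\N\setminus v_{k-1}$ gets collapsed to the value $f_{u'}(\bsa)$), and $Q_{v_k,\bsa}$ reproduces constants by (\ref{summe=1}); hence it contributes no variance, and the sum may be restricted to $\emptyset\neq u\subseteq v_{k-1}$.

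Finally, I would invoke the ANOVA Invariance Lemma \ref{ANOVA}. Lemma \ref{Analogon-Lemma7} places $\Psi_k(f^+_{u_j,k})\in H_{u_j}$ and $\Psi_{k-1}(f^-_{u,k})\in H_u$, so each is (by Remark \ref{YES-ANOVA!}) a pure ANOVA term on $D^{v_k}$ indexed by $u_j$, respectively $u$; Lemma \ref{ANOVA} thereby identifies the corresponding random variable on $\Omega^{v_k}$ as the $u_j$-th (respectively $u$-th) ANOVA component of the function $\omega\mapsto Q_{v_k,\bsa}(\omega,\,\cdot\,)$. Since the index sets $\{u_j:j\in V_k\}\cup\{u:\emptyset\neq u\subseteq v_{k-1}\}$ are pairwise distinct and all nonempty, the corresponding random variables are orthogonal in $L^2(\Omega^{v_k},\PP^{v_k})$ and have zero mean by (\ref{eqANOVAdecompprop}), hence pairwise uncorrelated; summing their variances gives (\ref{varidentity}). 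The most delicate point will be this final invocation of Lemma \ref{ANOVA}, which requires each relevant index set to lie in the admissible class $\mathcal V=\{w\subseteq v_k:\gamma_w>0\}$ for which $Q_{v_k,\bsa}$ satisfies condition (*); this holds for the $u_j$ by construction of $V_k$, and for the remaining $u$-terms either via a weight-monotonicity argument ($\gamma_u=0\Rightarrow\Psi_{k-1}(f^-_{u,k})=0$) or because the specific $Q_{v_k,\bsa}$ used (e.g.\ scrambled polynomial lattice rules) satisfies condition (*) on the full power set of $v_k$.
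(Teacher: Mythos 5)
Your proof is correct and follows essentially the same route as the paper's: reduce $Q_k(f)$ to $Q_{v_k,\bsa}\bigl((\Psi_k-\Psi_{k-1})f\bigr)$, decompose via Lemma \ref{Psi-Anova}, and use Lemma \ref{ANOVA} together with Remark \ref{YES-ANOVA!} to identify the summands as orthogonal ANOVA components whose variances add, discarding the constant $u=\emptyset$ term. You in fact spell out two details the paper leaves implicit, namely the identity $Q_{v_k,\bsa}\circ\Psi_k=Q_{v_k,\bsa}$ and the treatment of zero-weight index sets.
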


\begin{proof}
For $k \in [m]$ we obtain from Lemma \ref{Psi-Anova}
\begin{equation*}
\Var(Q_k(f))
=  \Var \left( \sum_{j\in V_{k}}
Q_{v_k,\bsa}(\Psi_k(f^+_{u_j,k}))
+ \sum_{u\subseteq v_{k-1}} Q_{v_k,\bsa}(-\Psi_{k-1}(f^-_{u,k}))\right).
\end{equation*}
Recall from Lemma \ref{Analogon-Lemma7} that $\Psi_k(f^+_{u_j,k}) \in H_{u_j}$
and $-\Psi_{k-1}(f^-_{u,k}) \in H_u$.
Since $Q_{v_k,\bsa}$ satisfies condition (*)
in Lemma \ref{ANOVA} for $\mathcal{V} = \{u\subseteq v_k \,|\, \gamma_u >0\}$,
we obtain with Remark \ref{YES-ANOVA!} and
Lemma \ref{ANOVA}
\begin{equation*}
\Var(Q_k(f))
=  \sum_{j\in V_{k}}
\Var(Q_{v_k,\bsa}(\Psi_k(f^+_{u_j,k})) )
+ \sum_{u\subseteq v_{k-1}} \Var(Q_{v_k,\bsa}(\Psi_{k-1}(f^-_{u,k}))) .
\end{equation*}
The statement of Lemma \ref{Var-ANOVA} follows after
observing that for $u=\emptyset$ the function $\Psi_{k-1}(f^-_{\emptyset,k})
\in H_\emptyset$ is constant, and therefore also
$Q_{v_k,\bsa}(\Psi_{k-1}(f^-_{\emptyset,k}))$.
\end{proof}

Due to (\ref{rand-case-error}) and (\ref{variation}), we get for $f\in B(K)$
\begin{equation}
 \label{zeile1}
\EE(I(f) - Q(f))^2
\le \sum^m_{k=1} \Var(Q_{k}(f)) + {\rm b}^2_{m}
\end{equation}
and for $k \in [m]$ we have that \eqref{varidentity} holds. Assume now that there exist for every $k\in\N$ algorithms of the form
(\ref{algobaustein}) that satisfy (\ref{summe=1}) and condition (*) of Lemma \ref{ANOVA} for $\mathcal{V} = \{u\subseteq v_k \,|\, \gamma_u >0\}$,
and for which there exists a $\tau>0$
and for each $u\subseteq v_k$ with $\gamma_u>0$
a constant $C_{u,k,\tau}$ such that
\begin{equation}
 \label{algo-annahme}
\Var(Q_{v_k,\bsa}(f_u)) \le C_{u,k,\tau}(n_k - 1)^{-\tau} \|f_u\|^2_{k_u}
\hspace{2ex}\text{for all $f_u\in H_u$.}
\end{equation}
Then with Lemma \ref{Analogon-Lemma7} and Remark \ref{Obs}, we obtain for all $f\in B(K)$
\begin{equation*}
 \begin{split}
  &\Var(Q_k(f))\\
 \le &\Bigg( \sum_{j\in V_k} C_{u_j,k,\tau}
\|\Psi_k(f_{u_j,k}^+)\|^2_{k_{u_j}}
+ \sum_{\emptyset \neq u \subseteq v_{k-1}} C_{u,k,\tau}
\|\Psi_{k-1}(f_{u,k}^-)\|^2_{k_{u}} \Bigg) (n_k - 1)^{-\tau}\\
\le &\left( \max_{j\in V_k} C_{u_j,k,\tau}  r^2_{k,u_j}
+ \max_{\emptyset \neq u \subseteq v_{k-1}} C_{u,k,\tau}  \tilde{r}^2_{k,u}
\right) (n_k - 1)^{-\tau}.
 \end{split}
\end{equation*}
Hence
\begin{equation} \label{Fehlerabschaetzung}
e(Q,B(K))^2 \le \left( \sum^m_{k=1} \left( \max_{j\in V_k} C_{u_j,k,\tau} r^2_{k,u_j}
+ \max_{\emptyset \neq u \subseteq v_{k-1}} C_{u,k,\tau}  \tilde{r}^2_{k,u}
\right) (n_k-1)^{-\tau} + {\rm b}^2_m \right).
\end{equation}
The aim is now to minimize the right hand side of this error bound for given
cost by choosing $m$, $L_1,\ldots,L_m$,  and $n_1,\ldots,n_m$ essentially optimal.
To do so, one needs more specific information about the
constants $C_{u,k,\tau}$ and about the weights $(\gamma_u)_{u\in\U}$.

\subsection{Finite-Intersection Weights}

Let $(\gamma_{u_i})_{i\in \N}$ be finite-intersection weights of
finite order $\omega$. Let $\eta\in\N$ be such that the set system
$u_j$, $j=1,2,\ldots$, satisfies (\ref{cond}). Put
\begin{equation} \label{eqdforfiniteintersectionweights}
d:= \eta(\omega-1)+1.
\end{equation}
Here we choose the sets $v_k = v_k^{(1)} = \cup_{j\in [L_k]} u_j$ in (\ref{vk12}),
where the numbers $L_1,L_2,\ldots$ will be determined later.
Observe that $\eta^{-1} L_k \le |v_k| \le \omega L_k$ for all $k\in\N$.
We assume that $\bsa = (a,a,\ldots)$, where $a\in D$ satisfies (\ref{anker_a}).
Notice that this assumption leads to
\begin{equation*}
0<\min\{1, k(a,a)^\omega\} \le k_{u_j}(\bsa,\bsa) \le \max\{1, k(a,a)^\omega\}
\hspace{2ex}\text{for all $j\in\N$.}
\end{equation*}

\begin{proposition}
\label{Prop3.11}
Assume that for $d$ given by \eqref{eqdforfiniteintersectionweights} and all $n\in\N$ there exist randomized linear algorithms
$Q_n=Q_{[d],n}$ of the form (\ref{alg-form}), that satisfy condition (*) of Lemma \ref{ANOVA} for $\mathcal{V} = \{u\subseteq [d]\}$, and for
which there exist constants $\alpha, \beta$, and $C_d$, independent of $n$, such that
\begin{equation} \label{eqconditionquadrulefinintersecweights}
 \Var(Q_{n} (f_u)) \le C_{d} n^{-\alpha} \ln(n)^\beta \|f_u\|^2_{k_u}
\hspace{2ex}\text{for all $u\subseteq [d]$, $f_u\in H_u$.}
\end{equation}
Then we find for all $k\in\N$ and all $n_k\in \N$ randomized linear algorithms
$Q_{v_k,\bsa}$ of the form (\ref{algobaustein}), that satisfy condition (*) of Lemma \ref{ANOVA} for $\mathcal{V} = \{u\subseteq v_k \,|\, \gamma_u >0\}$
and
\begin{equation}
\label{var-est}
 \Var(Q_{v_k,\bsa}(f_{u_j})) \le C_{d} n^{-\alpha}_k \ln(n_k)^\beta
 \|f_{u_j}\|^2_{k_{u_j}}
\hspace{2ex}\text{for all $u_j\subseteq v_k$, $f_{u_j}\in H_{u_j}$.}
\end{equation}
If all $Q_n$ satisfy (\ref{summe=1}), then the $Q_{v_k,\bsa}$ satisfy
(\ref{summe=1}), too.
\end{proposition}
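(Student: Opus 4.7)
The plan is to build $Q_{v_k,\bsa}$ by pulling the given $d$-dimensional algorithm $Q_n$ back along the coordinate map $\phi:\N\to[d]$ supplied by Lemma \ref{phi} (whose key feature is that $\phi|_u$ is injective for every $u\in\U$ with $\gamma_u>0$). Writing the samples of $Q_n$ in the canonical factorized form $\bsx^{(i)}(\omega)=(x^{(i)}_l(\omega_l))_{l\in[d]}$ afforded by condition (*) of Lemma \ref{ANOVA} applied with $v=[d]$, I set
\begin{equation*}
(\bst^{(i,k)}_{v_k}(\omega))_j := x^{(i)}_{\phi(j)}(\omega_j),\qquad j\in v_k,\ \omega\in\Omega^{v_k},
\end{equation*}
and $w_{i,k}:=w_i$. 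The resulting $Q_{v_k,\bsa}$ is of the form (\ref{algobaustein}) on the probability space $\Omega^{v_k}$ and trivially inherits (\ref{summe=1}).

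Verifying condition (*) of Lemma \ref{ANOVA} for $\mathcal{V}=\{u\subseteq v_k\,|\,\gamma_u>0\}$ is then immediate: for any $v\in\mathcal{V}$ and $j\in v$ the random variable $(\bst^{(i,k)}_{v_k})_j$ is a function of $\omega_j$ alone and has law $\rho$ inherited from $x^{(i)}_{\phi(j)}$. For the variance bound, fix $u_j\subseteq v_k$ with $\gamma_{u_j}>0$. Since $\phi|_{u_j}$ is a bijection onto $\phi(u_j)\subseteq[d]$, the rule
\begin{equation*}
\tilde{f}_{u_j}(\bsy):=f_{u_j}((y_{\phi(j')})_{j'\in u_j}),\qquad \bsy\in D^{[d]},
\end{equation*}
defines a norm-preserving map $H_{u_j}\to H_{\phi(u_j)}$. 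Because $f_{u_j}$ depends only on the $u_j$-coordinates by Lemma \ref{restrict}, one has $f_{u_j}(\bst^{(i,k)}_{v_k}(\omega);\bsa)=\tilde{f}_{u_j}(\bsy^{(i)}(\omega))$ as soon as $\bsy^{(i)}(\omega)_l=x^{(i)}_l(\omega_{(\phi|_{u_j})^{-1}(l)})$ for $l\in\phi(u_j)$. The indices $(\phi|_{u_j})^{-1}(l)$, $l\in\phi(u_j)$, are mutually distinct elements of $u_j\subseteq v_k$, so the coordinates $\omega_{(\phi|_{u_j})^{-1}(l)}$ are i.i.d.~$\PP$-distributed; extending them to a $\tilde{\omega}\in\Omega^d$ with independent auxiliary entries (which $\tilde{f}_{u_j}$ ignores) gives the pathwise identity $Q_{v_k,\bsa}(\omega,f_{u_j})=Q_n(\tilde{\omega},\tilde{f}_{u_j})$ with $\tilde{\omega}$ having product distribution on $\Omega^d$. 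Taking variances, applying the hypothesis (\ref{eqconditionquadrulefinintersecweights}) to $\tilde{f}_{u_j}\in H_{\phi(u_j)}$, and using $\|\tilde{f}_{u_j}\|_{k_{\phi(u_j)}}=\|f_{u_j}\|_{k_{u_j}}$ then yields (\ref{var-est}).

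The only substantive point is the existence of a single $\phi$ with fixed finite codomain $[d]$ that is injective on every active set $u$ simultaneously: without this, a collision $\phi(j)=\phi(j')$ with $j\neq j'$ both lying in some $u$ with $\gamma_u>0$ would break both the independence needed for condition (*) and the isometry $H_{u_j}\to H_{\phi(u_j)}$ used in the variance estimate. Precisely this uniformity is furnished by Lemma \ref{phi} for finite-intersection weights with $d=\eta(\omega-1)+1$, and everything else is routine bookkeeping.
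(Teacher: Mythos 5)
Your proof is correct and follows essentially the same route as the paper: both invoke Lemma \ref{phi} to pull the $d$-dimensional rule $Q_{n_k}$ back to $D^{v_k}$ via $\nu\mapsto x^{(i)}_{\phi(\nu)}$ with unchanged weights, so that each active set $u_j$ sees $|u_j|$ distinct, independently driven coordinates of $\bsx^{(i)}$. The only (harmless) difference is that you re-index the randomness by $\Omega^{v_k}$ (a fresh $\omega_j$ for each $j\in v_k$) whereas the paper keeps it on $\Omega^{[d]}$ via $\omega_{\phi(\nu)}$; the restrictions to active sets have the same law either way, and your variance transfer via the relabeling isometry $H_{u_j}\to H_{\phi(u_j)}$ is exactly the step the paper leaves as ``easily seen.''
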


\begin{proof}
Consider for given $n_k$ the algorithm $Q_{n_k}$.
Due to Lemma \ref{phi} we find a mapping $\phi: \N \to [d]$ such that for
all $j\in\N$ the restriction $\phi|_{u_j}$ is injective.
We obtain the random point
$\bst^{(i,k)}_{v_k} \in D^{v_k}$
by defining its $\nu$th component by
\begin{equation}
\label{update}
t^{(i,k)}_{v_k,\nu}(\omega):= x^{(i)}_{\phi(\nu)}(\omega_{\phi(\nu)})
\hspace{2ex}\text{for all $\nu\in v_k$, $\omega\in \Omega^{[d]}$, $i=1,\ldots, n_k$.}
\end{equation}
Notice that the projection of $\bst^{(i,k)}_{v_k}$ to $[0,1]^{u_j}$ for
$u_j\subseteq v_k$ consists of $|u_j|$ different components of $\bsx^{(i)}$.
We choose the coefficients of $Q_{v_k,\bsa}$ to be the coefficients of $Q_{n_k}$,
i.e., $w_{j,k} := w_j$.
Observe that the resulting randomized algorithm  $Q_{v_k,\bsa}$ satisfies
condition (*) of Lemma \ref{ANOVA} for $\mathcal{V} = \{u\subseteq v_k \,|\, \gamma_u >0\}$.
It is easily seen that (\ref{var-est}) holds.
\end{proof}

\subsubsection{Variable Subspace Sampling}

In the variable subspace sampling cost model we have the following result on the multilevel algorithm and finite-intersection weights.

\begin{theorem}
\label{Error-FIW}
Assume that for $d$ given by \eqref{eqdforfiniteintersectionweights} and all $n\in\N$ there exist an algorithm $Q_n=Q_{[d],n}$
as in the condition of Proposition \ref{Prop3.11} that satisfies (\ref{summe=1}). Consider the multilevel algorithm $Q$ defined in (\ref{multilevel-algo}), where
the algorithms $Q_{v_k,\bsa}$ are as in Proposition \ref{Prop3.11}.
Let $N$ be the cost of the algorithm corresponding to the cost function
$\$(\nu) = O(\nu^s)$ for some $s>0$. Then there exists for all $\delta >0$ a constant
$C=C(d, \delta)$ such that
\begin{equation*}
e(Q,B(K))^2 \le C N^{- \alpha+\delta}
\hspace{2ex}\text{if $ \decay_{\bsgamma} \ge 1+\alpha s$,}
\end{equation*}
and
\begin{equation*}
e(Q, B(K))^2 \le C N^{-\frac{\decay_{\bsgamma}-1}{s} +\delta}
\hspace{2ex}\text{if $ 1 + \alpha s > \decay_{\bsgamma} > 1$.}
\end{equation*}
\end{theorem}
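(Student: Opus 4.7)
The plan is to combine the generic multilevel error bound \eqref{Fehlerabschaetzung} with the variance estimate \eqref{var-est} of Proposition \ref{Prop3.11}, the bias formula of Lemma \ref{Bias-Estimate}, and the finite-intersection structure, and then to close the argument by carefully balancing $m$, $L_k$ and $n_k$. Since \eqref{var-est} realises assumption \eqref{algo-annahme} with $\tau=\alpha$ and $C_{u,k,\tau}=C_d\ln(n_k)^\beta$, the inequality \eqref{Fehlerabschaetzung} specialises to
\begin{equation*}
e(Q,B(K))^2 \;\le\; C_d\sum_{k=1}^m R_k\,(n_k-1)^{-\alpha}\ln(n_k)^\beta \;+\; {\rm b}^2_m,
\end{equation*}
where $R_k := \max_{j\in V_k} r_{k,u_j}^2 + \max_{\emptyset\neq u\subseteq v_{k-1}}\tilde{r}_{k,u}^2$, so the problem reduces to estimating $R_k$ and ${\rm b}_m^2$ and then optimising the parameters.

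The central technical step will be to prove that $R_k = O(L_{k-1}^{-p})$ for every $k\ge 2$ and every $p<\decay_\bsgamma$, while $R_1$ is simply bounded by a constant via \eqref{summable}. Fix $j\in V_k$ with $k\ge 2$. Any $u_i$ that contributes to $r_{k,u_j}^2$ has the form $u_i=u_j\cup u'$ with $u'\subset\N\setminus v_k$, so $u_i\supseteq u_j$; since $j\in V_k$ forces $u_j\not\subseteq v_{k-1}$, also $u_i\not\subseteq v_{k-1}$, and the construction $v_{k-1}=\bigcup_{i\le L_{k-1}}u_i$ then forces $i>L_{k-1}$ in the weight ordering. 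The definition of $\decay_\bsgamma$ yields $\widehat{\gamma}_{u_i}\le \widehat{\gamma}_{u_{L_{k-1}+1}}=O(L_{k-1}^{-p})$, and because $|u_i|\le\omega$ and $\bsa=(a,\ldots,a)$ also $\gamma_{u_i}=O(L_{k-1}^{-p})$. The finite-intersection condition \eqref{fiw} caps the number of relevant $u_i$ by $\eta$, hence $r_{k,u_j}^2=O(L_{k-1}^{-p})$. The same argument applies to $\tilde{r}_{k,u}^2$, since any $u_i$ contributing there must intersect $v_k\setminus v_{k-1}\subseteq\N\setminus v_{k-1}$ and therefore again lie outside $v_{k-1}$. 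Finally, Lemma \ref{Bias-Estimate} together with the same index argument delivers ${\rm b}^2_m\le\tail_\bsgamma(m)=O(L_m^{1-p})$.

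For the parameter choice I would fix $p<\decay_\bsgamma$ close to $\decay_\bsgamma$, set $L_k=b^k$ for some $b\in(1,\infty)$ so that $|v_k|=\Theta(b^k)$ by finite-intersection, pick the number of levels $m=\Theta(\log N)$ such that $L_m=\Theta((N/m)^{1/s})$, and use the \emph{equal-cost-per-level} allocation $n_k=\max(2,\lfloor N/(m\,\$(|v_k|))\rfloor)$. The total cost is then $\sum_k n_k\$(|v_k|)=O(N)$, and the variance contribution of level $k\ge 2$ is $O((m/N)^\alpha L_k^{\alpha s}L_{k-1}^{-p}(\log N)^\beta)$. The geometric series $\sum_k b^{k\alpha s-(k-1)p}$ is summable when $p>\alpha s$ and dominated by its top term when $p<\alpha s$; in either subcase an elementary computation gives total variance $O((\log N)^{\alpha+\beta}N^{-\min\{\alpha,p/s\}})$, while the bias is $O((\log N)^{(p-1)/s} N^{-(p-1)/s})$. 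Combining the two yields $e(Q,B(K))^2=O((\log N)^c N^{-\min\{\alpha,(p-1)/s\}})$, and letting $p$ tend to $\decay_\bsgamma$ from below absorbs the logarithmic factor into $N^\delta$ and produces $N^{-\alpha+\delta}$ when $\decay_\bsgamma\ge 1+\alpha s$ and $N^{-(\decay_\bsgamma-1)/s+\delta}$ otherwise.

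The main obstacle is the combinatorial step ``$u_i\not\subseteq v_{k-1}\Rightarrow i>L_{k-1}$'' and its conversion, through the ordering of the $\widehat{\gamma}_{u_i}$ together with \eqref{fiw}, into the uniform per-level bound $R_k=O(L_{k-1}^{-p})$; this is where the geometry of the sets $v_k$ and the decay of the weights first interact. Once this estimate is available the parameter optimisation is essentially standard, except for the observation that a purely geometric allocation $n_k\sim L_k^{-q}$ cannot simultaneously handle both regimes in the theorem, whereas the equal-cost-per-level allocation coupled with $m=\Theta(\log N)$ cleanly delivers both claimed rates from a single algorithm.
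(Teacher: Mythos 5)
Your proof is correct and reaches both claimed rates, following the same skeleton as the paper (specialize the generic bound \eqref{Fehlerabschaetzung} via Proposition \ref{Prop3.11}, control $r^2_{k,u_j}$, $\tilde r^2_{k,u}$ and ${\rm b}^2_m$ through the decay of the ordered weights $\widehat{\gamma}_{u_j}$, then balance $m$, $L_k$, $n_k$), but it executes two steps differently. First, the per-level factor: you bound $r^2_{k,u_j}$ and $\tilde r^2_{k,u}$ by $O(L_{k-1}^{-p})$, using \eqref{fiw} to cap the number of nonzero terms at $1+\rho$ and the correct observation that $u_i\not\subseteq v_{k-1}$ forces $i>L_{k-1}$ in the ordering, so each surviving $\widehat{\gamma}_{u_i}=O(L_{k-1}^{-p})$. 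The paper instead bounds both quantities by the full tail $\tail_{\bsgamma}(k-1)=O(L_{k-1}^{1-p})$, which is weaker by a factor $L_{k-1}$ but still sufficient; your sharper estimate buys nothing in the end because the bias ${\rm b}^2_m=O(L_m^{1-p})$, which genuinely is a full tail sum, is what fixes the exponent in the regime $1+\alpha s>\decay_{\bsgamma}>1$. Second, the allocation: the paper derives the cost-optimal $n_k\propto\sigma_k^{1/(\alpha+1-\delta)}L_k^{-s/(\alpha+1-\delta)}$ by Lagrange multipliers and then couples $S$ to $L_m$ differently in the two regimes, whereas you use the equal-cost-per-level rule $n_k\approx N/(m L_k^s)$ with $m=\Theta(\log N)$ levels, paying only polylogarithmic factors that your limit $p\uparrow\decay_{\bsgamma}$ absorbs into $N^{\delta}$. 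Your allocation is simpler and regime-independent; the paper's is exactly optimal for fixed $m$. The only point to watch is the boundary case $\decay_{\bsgamma}=1+\alpha s$, where $p<\decay_{\bsgamma}$ is strict and you can only arrange $(p-1)/s\ge\alpha-\delta$ rather than $\ge\alpha$ — which is exactly what the stated $N^{-\alpha+\delta}$ tolerates, and what the paper handles by working with $\alpha-\delta$ throughout.
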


In the case, where the assumptions of Corollary \ref{lowboundFIW} hold for the same $\alpha$ as in Theorem \ref{Error-FIW}, our lower bound on $e_{N, \var}(B(K))$ shows
that the upper bounds in Theorem \ref{Error-FIW} are essentially sharp.

\begin{proof}
Let $f\in B(K)$.
Due to Proposition \ref{Prop3.11}, we find for every $\delta >0$ a constant
$C= C_{d,\delta}$ with
\begin{equation*}
\Var(Q_{v_k,\bsa}(\Psi_k (f^+_{u_j,k}))) \le C n_k^{-(\alpha-\delta)} \|\Psi_k(f^+_{u_j,k})\|^2_{k_{u_j}}
\end{equation*}
for all $j\in V_k$, and
\begin{equation*}
\Var(Q_{v_k,\bsa}(\Psi_{k-1} (f^-_{u,k}))) \le C n_k^{-(\alpha-\delta)} \|\Psi_{k-1}(f^-_{u,k})\|^2_{k_{u}}
\end{equation*}
for all $u\subseteq v_{k-1}$.
Furthermore, Lemma \ref{Analogon-Lemma7} gives us
\begin{equation*}
\|\Psi_k(f^+_{u_j,k})\|^2_{k_{u_j}} \le r^2_{k,u_j} \|f^+_{u_j,k}\|^2_K
\end{equation*}
and
\begin{equation*}
\|\Psi_{k-1}(f^-_{u,k})\|^2_{k_{u}} \le \tilde{r}^2_{k,u} \|f^-_{u,k}\|^2_K.
\end{equation*}
Now
\begin{equation}
\label{est_r}
 r^2_{k,u_j}
= O(\tail_{\bsgamma}(k-1))
\hspace{2ex}\text{and}\hspace{2ex}
 \tilde{r}^2_{k,u}
= O(\tail_{\bsgamma}(k-1)),
\end{equation}
and we set $\sigma_k := \tail_{\bsgamma}(k-1)$ for $k=1,2,\ldots$.
We get from \eqref{varidentity}
\begin{equation*}
\Var(Q_{k}(f)) =
O \Bigg( n_k^{-(\alpha-\delta)} \sigma_k \Bigg( \sum_{j\in V_k} \|f^+_{u_j,k} \|^2_K
+ \sum_{u\subseteq v_{k-1}} \|f^-_{u,k-1}\|^2_K \Bigg) \Bigg),
\end{equation*}
and the two sums in parentheses are bounded by $2\|f\|^2_K \le 2$, see Remark \ref{Obs}. Hence
\begin{equation}
\Var(Q_{k}(f)) =
O \left( n_k^{-(\alpha-\delta)} \sigma_k \right).
\end{equation}
Thus we have, due to (\ref{zeile1}) and since $b^2_m=O(\tail_{\bsgamma}(m))$,
\begin{equation}
\label{hecke}
e(Q,B(K))^2 = O \Bigg( \sum^m_{k=1} \sigma_k n_k^{-(\alpha-\delta)} + \sigma_{m+1} \Bigg).
\end{equation}
Let $S> M$, where $M:= \sum^m_{k=1}L_k^s$,  be given.
We assume that $L_k = L \lceil A^k \rceil$ for  a fixed $L\in\N$ and
$A>1$.
We want to find the minimum
$\bsx^*=(x_1^*,\ldots,x^*_m)$ of the function
$$
G(\bsx)=\sum^m_{k=1}\sigma_k x^{-(\alpha-\delta)}_k
\hspace{2ex}\text{subject to the constraint}\hspace{2ex}
\sum^m_{k=1}x_k L_{k}^s = S.
$$
Due to Lagrange's multiplier theorem there exists a
$\lambda\in\R$ such that
$\grad G(\bsx^*) = \lambda(L_1^s,\ldots, L_m^s)$.
This relation and the constraint imply that the minimum $\bsx^*$
is given by
\begin{equation}
\label{minimum}
x^*_k = \widetilde{C} \sigma_k^{\frac{1}{\alpha+1-\delta}} L_k^{-\frac{s}{\alpha+1-\delta}},
\hspace{2ex}\text{where}\hspace{2ex}
\widetilde{C} = S \left( \sum^m_{k=1} \sigma_k^{\frac{1}{\alpha+1-\delta}}
L_k^{\frac{(\alpha-\delta)s}{\alpha+1-\delta}}
\right)^{-1}.
\end{equation}
For $k=1,2,\ldots,m$ we choose $n_k:= \lceil x^*_k \rceil $.
This leads to
\begin{equation*}
N
= O\left( \sum^m_{k=1} n_k L^s_k \right)
= O(S +M)=O(S).
\end{equation*}
We have
\begin{equation*}
\sum^m_{k=1} \sigma_k n_k^{-(\alpha-\delta)} \le S^{-(\alpha-\delta)}
\Bigg( \sum^m_{k=1} \sigma_k^{\frac{1}{\alpha+1-\delta}}
L_k^{\frac{(\alpha-\delta)s}{\alpha+1-\delta}} \Bigg)^{\alpha+1-\delta}.
\end{equation*}
Let $p\in (1, \decay_{\bsgamma})$, then $\sigma_1 = O(1)$ and
$\sigma_k = O(L_{k-1}^{1-p})$ for $k\ge 2$. Thus we have altogether
\begin{equation*}
e(Q,B(K))^2 = O \left( S^{-(\alpha-\delta)} \left( 1+ L_m^{(\alpha-\delta)s+1-p} \right)
+ L_m^{1-p} \right).
\end{equation*}

\emph{Case 1}: $\decay_{\bsgamma} \ge 1 + \alpha s$. Then we may choose $p$ such that
$(\alpha-\delta)s < p-1$. Choose $m$ such that $S = \Theta(L^{\frac{p-1}{\alpha-\delta}}_m)$, then
\begin{equation*}
e(Q,B(K))^2 = O \left( S^{-(\alpha-\delta)}\right)
= O \left( N^{-(\alpha-\delta)}\right).
\end{equation*}

\emph{Case 2}: $\decay_{\bsgamma} < 1 + \alpha s$. Then, for $\delta$ small enough, we get
$(\alpha-\delta)s > p-1$. Choose $m$ such that $S = \Theta(L^{s}_m)$, then
\begin{equation*}
e(Q,B(K))^2 = O \left( S^{\frac{1-p}{s}}\right)
= O \left( N^{-\frac{p-1}{s}}\right).
\end{equation*}
\end{proof}

\subsubsection{Fixed Subspace Sampling} \label{subsecfixedsubsamp}

In this subsection, we discuss fixed subspace sampling.
For some fixed $L\in\N$ let
\begin{equation*}
v := \bigcup_{j \in [L]} u_j.
\end{equation*}
We focus on algorithms of the form
\begin{equation} \label{eqfixedsubspacesamplalg}
Q(f) = Q_{v,\bsa}(f) = \sum^n_{i=1} w_i  f( \bst^{(i)}_{v} , \bsa).
\end{equation}
We interpret the ``unilevel algorithm'' $Q(f)$ as a multilevel algorithm with
$m=1$, $L_1=L$, $v_1=v$, and $n_1 = n$. Notice that the
upper bound in the next theorem is essentially sharp, as can be seen from
the corresponding lower bound in Corollary \ref{lowboundFIW}.

\begin{theorem} \label{theofixedsubspacesampl} Assume that for $d$ given by \eqref{eqdforfiniteintersectionweights} and all $n\in\N$ there exists an algorithm $Q_n=Q_{[d],n}$
as in the condition of Proposition \ref{Prop3.11} that satisfies (\ref{summe=1}).
Let $Q=Q_{v_1,\bsa}$ be as in Proposition \ref{Prop3.11}.
Let $N$ be the cost of the algorithm corresponding to the cost function
$\$(\nu) = O(\nu^s)$ for some $s>0$. Then there exists for all $\delta >0$ a constant
$C=C(d, \delta)$ such that
\begin{displaymath}
e(Q, B(K))^2 \leq C N^{ - \frac{(\alpha-\delta)(\decay_{\bsgamma}-1) }{(\alpha - \delta)s+\decay_{\bsgamma}-1}}.
\end{displaymath}
\end{theorem}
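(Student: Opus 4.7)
The plan is to treat the single-level algorithm $Q = Q_{v_1,\bsa}$ as the $m=1$ case of the multilevel framework from Subsection~\ref{subsecgenweights} and to specialize the general error decomposition \eqref{zeile1} together with the variance identity of Lemma~\ref{Var-ANOVA}. Since $v_0 = \emptyset$, the second sum in \eqref{varidentity} vanishes and the variance reduces to a single sum over $j \in V_1$.

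First I would combine Proposition~\ref{Prop3.11} (absorbing the factor $\ln(n)^\beta$ into $n^\delta$ at the cost of slightly enlarging $\delta$) with the norm estimate of Lemma~\ref{Analogon-Lemma7} to obtain
$$\Var(Q(f)) \le C_{d,\delta}\, n^{-(\alpha-\delta)}\, \max_{j\in V_1} r^2_{1,u_j}\, \sum_{j\in V_1} \|f^+_{u_j,1}\|_K^2.$$
The crucial observation is that for finite-intersection weights the quantities $r^2_{1,u_j}$ are uniformly bounded in $j$: by Definition~\ref{def-fiw} at most $1+\rho$ terms in the sum defining $r^2_{1,u_j}$ are nonzero, and each of these is at most $(\sup_v \gamma_v)\max(1,k(a,a)^\omega)$. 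Combined with $\sum_{j\in V_1} \|f^+_{u_j,1}\|_K^2 \le \|f\|_K^2 \le 1$ (Remark~\ref{Obs}), this yields $\Var(Q(f)) = O(n^{-(\alpha-\delta)})$, with a hidden constant that does not depend on $L$.

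Next I would estimate the bias via Lemma~\ref{Bias-Estimate}: the finite-intersection argument from the proof of Lemma~\ref{lemlowboundAva}, applied with $v = v_1$ and $|v_1| = \Theta(L)$, produces $b_{v_1,\bsa}^2 = O(L^{1-p})$ for every $p < \decay_{\bsgamma}$. Inserting both estimates into \eqref{zeile1} gives
$$e(Q,B(K))^2 = O\bigl(n^{-(\alpha-\delta)} + L^{1-p}\bigr).$$

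Finally I would optimize over $L$ under the cost constraint $N = O(n\,|v_1|^s) = O(nL^s)$. Writing $n = \Theta(N/L^s)$ and balancing the two error contributions leads to the choice $L = \Theta\bigl(N^{(\alpha-\delta)/((\alpha-\delta)s + p - 1)}\bigr)$ and to the rate $N^{-(\alpha-\delta)(p-1)/((\alpha-\delta)s + p - 1)}$; taking $p$ arbitrarily close to $\decay_{\bsgamma}$ (and correspondingly enlarging the constant $C$) produces the claimed bound. The main obstacle here is the bookkeeping of the parameter $\delta$, which must simultaneously absorb the $\ln(n)^\beta$ factor and the gap $\decay_{\bsgamma} - p$; the remaining work — the uniform boundedness of $r^2_{1,u_j}$ and the standard Lagrange-type optimization over $n,L$ — is essentially a simpler analogue of the computation in the proof of Theorem~\ref{Error-FIW}.
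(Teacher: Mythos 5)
Your proposal is correct and follows essentially the same route as the paper: the paper also treats $Q=Q_{v_1,\bsa}$ as the $m=1$ instance of the multilevel machinery, reuses the error decomposition and the optimal choice of $n_1$ from the proof of Theorem \ref{Error-FIW} to get $e(Q,B(K))^2 = O(S^{-(\alpha-\delta)}L_1^{(\alpha-\delta)s}+L_1^{1-p})$ with $S=\Theta(n_1L_1^s)$, and then balances the two terms by choosing $L_1$ as a suitable power of $S$. Your direct bound $r^2_{1,u_j}=O(1)$ via the finite-intersection property is just a minor variant of the paper's $\sigma_1=\tail_{\bsgamma}(0)=O(1)$, and the rest (bias $=O(L^{1-p})$ for $p<\decay_{\bsgamma}$, Lagrange-type optimization) matches the paper's computation.
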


\begin{proof}
Since our algorithm $Q=Q_{v_1,\bsa}$ is a multilevel algorithm with
$m=1$, we have just to follow the proof of Theorem \ref{Error-FIW}
and modify it slightly.
Let $p\in (1,\decay_{\bsgamma})$. From (\ref{hecke}) and (\ref{minimum}),
we obtain for the choice $n_1 := \lceil x^*_1 \rceil$ 
\begin{displaymath}
e(Q,B(K))^2 = O(S^{-(\alpha - \delta)} L_1^{(\alpha - \delta)s} + L_1^{1-p}),
\end{displaymath}
where $S= \Theta(n_1 L^s_1)$ and $N=O(S)$.
We set
\begin{displaymath}
S := \Theta\left(L_1^{\frac{(\alpha - \delta)s+p-1}{\alpha - \delta}} \right),
\hspace{2ex}\text{resulting in}\hspace{2ex}
e(Q, B(K))^2  =  O \left( N^{-\frac{(\alpha-\delta)(p-1)}{(\alpha-\delta)s + p-1}} \right) \, .
\end{displaymath}
\end{proof}

\subsection{Product Weights}
\label{mlalgprodweightspolys}

In this subsection, we discuss product weights, dealing with variable and fixed subspace sampling separately. We assume that ${\bf a}=(a,a,\dots)$, where $a \in D$ satisfies \eqref{anker_a}. Furthermore, we choose $v_k = v_k^{(2)} = [L_k]$ in (\ref{vk12}).

\subsubsection{Variable Subspace Sampling}

In the variable subspace sampling cost model we have the following result on the multilevel algorithm and product weights.

\begin{theorem} \label{theoprodweightsml} Let $\$(\nu) = O(\nu^s)$ for some $s>0$.
Assume that there exist for every $k \in \mathbb{N}$ algorithms of the form \eqref{algobaustein} that satisfy (\ref{summe=1})
and condition (*) of Lemma \ref{ANOVA} for $\mathcal{V}=\{u\subseteq v_k\}$.
Let $\alpha \ge 1$, and let
$\tau := \min\{\alpha, \decay_{\bsgamma}\} - \delta$ for some $\delta >0$.
Assume further that (\ref{algo-annahme}) holds for all $u \subseteq v_k$
and that for all
$j \in V_k$
\begin{equation} \label{eqdecayweight1}
C_{u_j,k,\tau} \gamma_{u_j} = O( L^{1-p}_{k-1} ) \, ,
\end{equation}
where $p:=\decay_{\bsgamma}-\delta s$, and for all $\emptyset \neq u \subseteq v_{k-1}$
\begin{equation} \label{eqdecayweight2}
C_{u,k,\tau} \gamma_u = O( 1 ) \, .
\end{equation}
Consider the multilevel algorithm defined in \eqref{multilevel-algo}, and let $N$ be the cost of the algorithm corresponding to the cost function
$\$(\nu)$. Then we obtain for $s \ge \frac{\alpha-1}{\alpha}$,
\begin{eqnarray*}
e(Q, B(K))^2 &=& O( N^{- \alpha + \delta}), \textrm{ if }
\decay_{\bsgamma} \geq 1+ \alpha s \, ,
\\ e(Q, B(K))^2 &=& O( N^{- \frac{\decay_{\bsgamma} -1}{s} + \delta}), \textrm{ if } 1+ \alpha s > \decay_{\bsgamma} > 1 \, ,
\end{eqnarray*}
and for $\frac{\alpha-1}{\alpha} > s > 0$,
\begin{eqnarray*}
e(Q, B(K))^2 &=& O( N^{- \alpha + \delta}), \textrm{ if }
\decay_{\bsgamma} \geq \alpha  \, ,
\\ e(Q, B(K))^2 &=& O( N^{- \decay_{\bsgamma} + \delta}), \textrm{ if } \alpha  > \decay_{\bsgamma} > \frac{1}{1-s} \, ,
\\ e(Q, B(K))^2 &=& O( N^{- \frac{\decay_{\bsgamma} -1}{s} + \delta}), \textrm{ if } \frac{1}{1-s} \ge \decay_{\bsgamma} > 1 \, .
\end{eqnarray*}
\end{theorem}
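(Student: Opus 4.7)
The plan is to follow the argument in the proof of Theorem~\ref{Error-FIW} step by step, replacing only the tail estimates and the final case analysis by their product-weight counterparts.

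First, because $\bsa=(a,a,\dots)$ and the weights are of product type, the sums \eqref{r} and \eqref{r_tilde} factorize as
\begin{equation*}
r^2_{k,u_j}=\gamma_{u_j}\prod_{\ell>L_k}(1+\gamma_\ell k(a,a)),\qquad
\tilde r^2_{k,u}=\gamma_u\Bigl(\prod_{\ell>L_{k-1}}(1+\gamma_\ell k(a,a))-\prod_{\ell>L_k}(1+\gamma_\ell k(a,a))\Bigr).
\end{equation*}
By \eqref{summable} the first product is uniformly bounded in $k$, while the difference of the two products is controlled by $\sum_{\ell>L_{k-1}}\gamma_\ell$. For every $p<\decay_{\bsgamma}$ this tail, as well as ${\rm b}_m^2$ via Lemma~\ref{Bias-Estimate}, admits the decay $O(L^{1-p})$ (the same fact which drives the product-weight part of Lemma~\ref{lemlowboundAva}). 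Inserting the hypotheses \eqref{eqdecayweight1} and \eqref{eqdecayweight2} into \eqref{Fehlerabschaetzung} therefore yields, with the shorthand $p:=\decay_{\bsgamma}-\delta s$,
\begin{equation*}
e(Q,B(K))^2=O\Bigl(\sum_{k=1}^{m}L_{k-1}^{1-p}(n_k-1)^{-\tau}+L_m^{1-p}\Bigr).
\end{equation*}

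Next I would choose $L_k=\lceil A^k\rceil$ for a fixed $A>1$ and minimize the variance sum under the cost constraint $\sum_k n_kL_k^s\le S=\Theta(N)$ by the Lagrange argument used in the proof of Theorem~\ref{Error-FIW}. The optimizer is $n_k^*\propto(L_{k-1}^{1-p}/L_k^s)^{1/(\tau+1)}$, and the geometric growth of $L_k$ collapses the resulting sum to
\begin{equation*}
\sum_{k=1}^{m}L_{k-1}^{1-p}(n_k^*)^{-\tau}=O\bigl(N^{-\tau}\max\{1,L_m^{\tau s+1-p}\}\bigr),
\end{equation*}
up to a harmless logarithmic factor on the critical line $\tau s=p-1$ which is absorbed by shrinking $\delta$. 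Equilibrating the bias $L_m^{1-p}$ with this variance produces two regimes: either $\tau\le(p-1)/s$, in which case $L_m=\Theta(N^{\tau/(p-1)})$ is admissible and $e(Q,B(K))^2=O(N^{-\tau})$; or $\tau>(p-1)/s$, forcing $L_m=\Theta(N^{1/s})$ and $e(Q,B(K))^2=O(N^{-(p-1)/s})$.

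It remains to translate these two regimes into the five sub-cases of the statement. Since $\tau=\min\{\alpha,\decay_{\bsgamma}\}-\delta$ and $p-1=\decay_{\bsgamma}-1-\delta s$, the threshold condition $\tau\le(p-1)/s$ reads $\alpha s\le\decay_{\bsgamma}-1$ on the sub-regime $\decay_{\bsgamma}\ge\alpha$ (where $\tau=\alpha-\delta$), and it reads $\decay_{\bsgamma}(1-s)\ge1$, equivalently $\decay_{\bsgamma}\ge1/(1-s)$, on the sub-regime $\decay_{\bsgamma}<\alpha$ (where $\tau=\decay_{\bsgamma}-\delta$ and which is only non-vacuous when $s<1$). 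The condition $s\ge(\alpha-1)/\alpha$ is equivalent to $\alpha\le1/(1-s)$, so under this hypothesis the second sub-regime is empty and the bifurcation is solely between $\decay_{\bsgamma}\ge1+\alpha s$ and $1+\alpha s>\decay_{\bsgamma}>1$, yielding the first pair of asserted rates. For $s<(\alpha-1)/\alpha$ one has $1/(1-s)<\alpha$, and the three asserted rates correspond to crossing the thresholds $\decay_{\bsgamma}=\alpha$ and $\decay_{\bsgamma}=1/(1-s)$ in turn. I expect the main obstacle to be precisely this final bookkeeping: although none of the individual inequalities is deep, one must verify that the three asymptotic rates $N^{-\alpha+\delta}$, $N^{-\decay_{\bsgamma}+\delta}$ and $N^{-(\decay_{\bsgamma}-1)/s+\delta}$ produced by the optimization match the five cases of the statement, with particular care at the boundary equalities where $\delta>0$ must be chosen small enough to keep the strict inequalities used in Step~2.
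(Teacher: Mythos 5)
Your proposal is correct and follows essentially the same route as the paper's proof: the same product-weight tail estimates $r^2_{k,u_j}=O(\gamma_{u_j})$, $\tilde r^2_{k,u}=O(\gamma_u L_{k-1}^{1-p})$, ${\rm b}^2_m=O(L_m^{1-p})$, the same Lagrange optimization of the $n_k$ under the cost constraint with geometrically growing $L_k$, and the same reduction to the rate $N^{-\min\{\alpha,\decay_{\bsgamma},(\decay_{\bsgamma}-1)/s\}+\delta}$ followed by the five-case bookkeeping. The only deviations (computing $\tilde r^2_{k,u}$ exactly rather than bounding it, and choosing $L_m$ adaptively per regime instead of always taking $S=\Theta(L_m^s)$) are immaterial.
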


In section \ref{secexample} we will see that condition (\ref{eqdecayweight1}) and
(\ref{eqdecayweight2}) are quite natural and are, in particular, satisfied by scrambled
polynomial lattice rules constructed via a component-by-component approach.

Assume that we have $e_N(B(K_{\{1\}}))^2 = \Omega(N^{-\alpha})$.
 Then we see that for cost functions $\$(\nu) = O(\nu^s)$, where
$s \ge \frac{\alpha-1}{\alpha}$, the upper bounds in Theorem \ref{theoprodweightsml}
are essentially sharp, as confirmed in Corollary \ref{lowboundFIW}. Furthermore, for $\frac{\alpha-1}{\alpha} > s > 0$, the upper bounds are essentially sharp in the regimes $\decay_{\bsgamma} \geq \alpha$ and $\frac{1}{1-s} \ge \decay_{\bsgamma} >1$.
The case $s \ge \frac{\alpha-1}{\alpha}$ is more interesting and relevant in applications
than the case $\frac{\alpha-1}{\alpha} > s > 0$, see, e.g., \cite{Gil08a, NHMR11, PW11}.

\begin{proof}
Let
$L_k = L\lceil A^k \rceil$ for fixed $L\in\N$ and $A>1$.
We use the analysis from Subsection \ref{subsecgenweights} and get \eqref{Fehlerabschaetzung}.
We have
\begin{eqnarray*}
r^2_{k,u_j} &=& \sum_{w \subset \mathbb{N} \setminus v_k} \gamma_{u_j \cup w} k_w(\bsa, \bsa) = \gamma_{u_j} \sum_{w \subset \mathbb{N} \setminus v_k} \gamma_w k_w(\bsa,\bsa) = O ( \gamma_{u_j} )
\end{eqnarray*}
and
\begin{eqnarray*}
\tilde{r}^2_{k,u} &=& \tilde{r}^2_{v_k,v_{k-1},u,\bsa} = \sum_{w' \subset \mathbb{N} \setminus v_{k-1}; w' \cap v_k \neq \emptyset } \gamma_{u \cup w'} k_{w'}(\bsa,\bsa)
\\ &\leq& \gamma_{u} \sum_{\emptyset \neq w' \subset \mathbb{N} \setminus v_{k-1}} \gamma_{w'} k(a,a)^{\vert w' \vert} \, .
\end{eqnarray*}
Now we have
\begin{equation*}
\begin{split}
&\sum_{\emptyset \neq w' \subset \mathbb{N} \setminus v_{k-1}} \gamma_{w'}
k(a,a)^{\vert w' \vert}
= \prod_{j =  L_{k-1}+1}^\infty (1+\gamma_j k(a,a)) -1\\
\le &k(a,a)\! \bigg(\sum_{j =  L_{k-1}+1}^\infty \gamma_j \bigg)\!
\Bigg( 1 + \frac{k(a,a)}{2} \bigg( \sum_{j =  L_{k-1}+1}^\infty \gamma_j \bigg)
\exp \bigg( k(a,a) \sum_{j =  L_{k-1}+1}^\infty \gamma_j \bigg) \Bigg)\\
= &k(a,a) O \left( L_{k-1}^{1-p} \right) \big( 1+o(1) \big) =  O \left( L_{k-1}^{1-p} \right).
\end{split}
\end{equation*}
From this we  obtain
\begin{equation*}
\tilde{r}^2_{k,u} = O( \gamma_u L^{1-p}_{k-1} )
\hspace{3ex}\text{and}\hspace{3ex} {\rm b}^2_m = O(L^{1-p}_m).
\end{equation*}
Thus we have from Equations \eqref{eqdecayweight1} and \eqref{eqdecayweight2}
\begin{equation} \label{eqprodweightsmlerror}
e(Q, B(K))^2 = O \left( \sum^m_{k=1}
L^{1 - p}_{k-1} (n_k-1)^{-\tau} + L^{1-p}_m \right) \, .
\end{equation}
We use the notation $\sigma_k := L^{1-p}_{k-1}$ and
$M:=\sum^m_{k=1} L^s_k$. Let $S \geq 2M$ be given.
Arguing as in the proof of Theorem \ref{Error-FIW}, we choose $n_k = \lceil x^*_k \rceil + 1$, where
\begin{equation} \label{eqprodweightsx}
x^*_k =  C \left( L^{\frac{1-p-s}{\tau+1}} \right) 
\hspace{2ex}\text{and}\hspace{2ex}
C = S \left( \sum^m_{k=1}  L^{\frac{1 - p + s \tau}{\tau +1}}_k \right)^{-1} \, .
\end{equation}
This leads to
\begin{displaymath}
N = O \left( \sum^m_{k=1} n_k L^s_k \right) = O(S) \, .
\end{displaymath}
We now have
\begin{eqnarray*}
\sum^m_{k=1} L^{1-p}_{k-1} (n_k-1)^{-\tau} &\leq&
S^{-\tau} \left( \sum^m_{k=1} L^{\frac{1 - p + s \tau}{\tau +1}}_k \right)^{\tau +1}
\\ \\ &= & O \left( S^{-\tau} \left( 1 + L^{1 - p + s \tau}_m \right)\right) \, .
\end{eqnarray*}
We set $S= \Theta(L^s_m)$ and obtain
\begin{eqnarray*}
e \left( Q, B(K) \right)^2 &=& O \left( S^{-\tau} + S^{- \tau} L^{1 - p + s \tau}_m + L^{1-p}_m \right)
\\ &=& O \left( S^{- \tau} + S^{\frac{1-p}{s}} \right)
\\ &=& O \left( S^{- \min \left\{ \tau, \frac{p-1}{s} \right\} } \right)
\\ &=& O \left( S^{- \min \left\{ \min \left\{ \alpha , \decay_{\bsgamma} \right\} , \frac{\decay_{\bsgamma} - 1}{s} \right\} + \delta } \right) \, .
\end{eqnarray*}
We consider two cases, $s \ge \frac{\alpha-1}{\alpha}$ and
$\frac{\alpha-1}{\alpha} > s > 0$.

\emph{Case 1}: If $s \ge \frac{\alpha-1}{\alpha}$, we consider two subcases.
If $\decay_{\bsgamma} \geq 1 + \alpha s$, then we have
$\frac{\decay_{\bsgamma}-1}{s} \ge \alpha$ and $\decay_{\bsgamma} \ge \alpha$. Hence
\begin{displaymath}
e ( Q, B(K))^2
= O \left( S^{- \alpha + \delta} \right) \, .
\end{displaymath}
If $1 + \alpha s > \decay_{\bsgamma} > 1$, then
$\frac{\decay_{\bsgamma}-1}{s} \in (0, \alpha)$. Moreover, $s\ge \frac{\alpha -1}{\alpha}$
implies $\decay_{\bsgamma} \ge \frac{\decay_{\bsgamma}-1}{s}$. Thus
\begin{displaymath}
e(Q, B(K))^2 = O \left( S^{ - \frac{\decay_{\bsgamma}-1}{s} + \delta} \right) \, .
\end{displaymath}

\emph{Case 2}: If $ \frac{\alpha-1}{\alpha} > s > 0$, we consider three subcases.
If $\decay_{\bsgamma} \geq \alpha$, then $\frac{\decay_{\bsgamma}-1}{s} > \alpha$.
Hence
\begin{displaymath}
e ( Q, B(K))^2 = O \left( S^{- \alpha + \delta} \right) \, .
\end{displaymath}
If $\alpha > \decay_{\bsgamma} > \frac{1}{1-s}$, then
$\frac{\decay_{\bsgamma}-1}{s} > \decay_{\bsgamma}$, and
\begin{displaymath}
e( Q, B(K))^2 = O \left( S^{- \decay_{\bsgamma} +\delta} \right).
\end{displaymath}
Finally, if $ \frac{1}{1-s} \ge \decay_{\bsgamma} >1$, then
$\decay_{\bsgamma} \ge \frac{\decay_{\bsgamma}-1}{s}$, and
\begin{displaymath}
e( Q, B(K))^2 = O \left( S^{- \frac{ \decay_{\bsgamma} -1}{s} +\delta} \right) \, .
\end{displaymath}
%
%
%
\end{proof}

\subsubsection{Fixed Subspace Sampling}

For fixed subspace sampling, our analysis recovers Theorem 1 from \cite{HMGNR10}.

\section{Example: The Unanchored Sobolev Space and Scrambled Polynomial Lattice Rules} \label{secexample}

In this section, we apply the results from Section \ref{secmultilevelalg} to a particular function space, the unanchored Sobolev space, and employ a particular class of quadrature rules, namely scrambled polynomial lattice rules.

\subsection{The Unanchored Sobolev Space} \label{subsecunachoredSobspace}

We recall the \emph{unanchored Sobolev space}, which is also discussed for example in \cite{HMGNR10,NW08,YH05}. Let $k:[0,1]^2\to \rr$ be the \emph{unanchored kernel} given by
\begin{equation*}
k(x,y) = 1/3 + (x^2 + y^2)/2 - \max(x,y).
\end{equation*}
Regarding the anchor, we fix $a=\frac{1}{2}$ and set $\bsa=(a,a,\dots)$ which minimizes
\begin{displaymath}
\sum_{u \in \mathcal{U}} \widehat{\gamma}_u = \sum_{u \in \mathcal{U}} \gamma_u k_{u} (\bsa, \bsa) \, ,
\end{displaymath}
see e.g. \cite{HMGNR10}. For $u \neq \emptyset$ the space $H_{u}$ consists of all absolutely continuous functions $f$ such that the weak derivative $f^{(u)} = \frac{\partial^{\vert u \vert} }{\prod_{j \in u} \partial x_j} f$ satisfies $f^{(u)} \in L_2 ([0,1]^u)$ and $\int_0^1 f(\bsy)\,{\rm d} y_j = 0$ for all $j\in u$. We have
\begin{displaymath}
\| f \|^2_{k_u} = \int_{[0,1]^u} ( f^{(u)} (\bsy) )^2 \,{\rm d}\bsy \, .
\end{displaymath}

\begin{remark} \label{remmininterrsUnanchoredSobspace}
We recall from \cite{N88}, Section 2.2.9, Proposition 1, that the $N$th minimal integration error in the Sobolev space $W^1_2 ( [0,1] )$ is of order $\Omega (N^{-3/2})$. For $\gamma_{\left\{ 1 \right\}} >0 $ the space $W^1_2 ([0,1])$ is obviously continuously embedded in $H(K_{ \left\{ 1 \right\}})$, thus implying $e_N (B(K_{\left\{ 1 \right\}}) )^2 = \Omega (N^{-3})$.
\end{remark}

\subsection{Scrambled Polynomial Lattice Rules} \label{subsecscrambbledpolylattrules}

In this subsection we recall a result on scrambled polynomial lattice rules that will be used in Subsection \ref{subsecresultsmultilevelalgs}. Polynomial lattice rules were introduced in \cite{N92a}, see also \cite{DKPS05,DP09,N92}. For background on the scrambling algorithm, we refer the reader to \cite{O95,O97}, for background on scrambled polynomial lattice rules and finite-dimensional integration results, we refer the reader to \cite{BD10}. The proof of the following result is given in the Appendix.
\begin{theorem} \label{theoCBC}
Let $(\gamma_u)_{u\in\U}$ be general weights. Assume that $v_k$, for $k \in \mathbb{N}$,
is as in Section \ref{secmultilevelalg}. Then for all $k$, $M_k \in \mathbb{N}$, there exists a scrambled polynomial lattice rule $\left\{ \bsx_i \right\}^{n_k}_{i=1} \in [0,1)^{v_k}$, where $n_k = b^{M_k}$ and $b$ a prime, such that the algorithm
\begin{equation} \label{eqalgCBCSec52}
Q_{v_k,n_k}(f) = \frac{1}{n_k} \sum^{n_k}_{i=1} f(\bsx_i) \, ,
\end{equation}
satisfies condition (*) of Lemma \ref{ANOVA} for $\mathcal{V} = \{u\subseteq v_k\}$,
and we have for all $ 1\leq \tau <3$ and all $u \subseteq v_k$,
\begin{displaymath}
\Var(Q_{v_k,n_k} (f_u)) \leq  C_{u,k,\tau} (n_k-1)^{- \tau} \|f_u \|^2_{k_u}
\hspace{3ex}\text{for all $f_u\in H_u$,}
\end{displaymath}
where
\begin{displaymath}
C_{u,k,\tau} = \left( \sum_{z_u \in w \subseteq [z_u]} \gamma^{\frac{1}{\tau}}_w C^{\vert w \vert}_{b, \frac{1}{\tau}} \right)^{\tau}  \gamma^{-1}_u \, ,
\end{displaymath}
here $C_{b, \lambda}$ is given by
\begin{equation} \label{eqCconstCBC}
C_{b, \lambda} := \max \left( \frac{b-1}{3^{\lambda} } \frac{b^{3 \lambda -1}}{b^{3 \lambda -1}-1} , \frac{b^{2 \lambda}}{(b+1)^{\lambda} 3^{\lambda}} \right) \, ,
\end{equation}
and $z_u = \max u$, $u \subset \mathbb{N}$.
\end{theorem}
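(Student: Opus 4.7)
My plan is to split the proof into two independent pieces: (i) checking that the scrambled polynomial lattice rule fits the structural condition (*) of Lemma \ref{ANOVA}, and (ii) establishing the variance bound via a component-by-component (CBC) construction combined with Owen's scrambling variance formula. The analytic ingredients come from \cite{BD10,DP09}; what needs attention is repackaging them so that the variance of a single ANOVA piece $f_u$ is controlled by a constant depending only on coordinates in $[z_u]$, rather than on all of $v_k$.

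For (i), I would recall that Owen's nested uniform scrambling is driven by mutually independent random bit streams, one per coordinate $j\in v_k$. Writing $\omega=(\omega_j)_{j\in v_k}$ for these bit streams, each scrambled coordinate $x^{(i)}_j$ depends only on $\omega_j$ and is marginally uniform on $[0,1]$. Since $n_k^{-1}\sum_i\mathbf{1}=1$, the weights automatically satisfy (\ref{summe=1}), and the coordinate-wise structure is exactly condition (*) of Lemma \ref{ANOVA} with $D=[0,1]$ and $\rho$ uniform; here $\mathcal V=\{u\subseteq v_k\}$ is the full power set.

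For (ii), I would expand $f_u \in H_u$ into its Walsh series and use Owen's identity that expresses the variance of a scrambled digital net applied to a Walsh character $\wal_{\bsk}$ in terms of a ``gain coefficient'' depending only on $\bsk$ and on the generating polynomial vector $\bsq$. Combined with the Walsh-coefficient estimates for the reproducing kernel $k$, this yields for each $w\subseteq v_k$ a figure of merit $B_{b,\tau,w}(\bsq,p)$ with
\[
 \sup\{\Var(Q_{v_k,n_k}(g)) : g\in H_w,\ \|g\|_{k_w}\le 1\} \le B_{b,\tau,w}(\bsq,p)\,(n_k-1)^{-\tau}.
\]
A CBC search then picks $q_j$ at step $j$ to minimise, over admissible polynomials, the quantity $\sum_{j\in w\subseteq[j]}\gamma_w^{1/\tau}B_{b,\tau,w}(\cdot,p)^{1/\tau}$. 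A standard Jensen-type averaging argument with exponent $\lambda=1/\tau$, valid precisely for $1\le\tau<3$ (which is exactly what makes the geometric series defining $C_{b,\lambda}$ in (\ref{eqCconstCBC}) convergent), gives for the chosen $q_j$ the bound $B_{b,\tau,w}(\bsq,p)^{1/\tau}\le C_{b,1/\tau}^{|w|}$ for every $w\subseteq[j]$ containing $j$. Running the induction up to $j=z_u$ isolates the dependence on $[z_u]$, and using Lemma \ref{Lemma5} to rewrite $\|f_u\|^2_{K_{v_k}}=\gamma_u^{-1}\|f_u\|^2_{k_u}$ turns the variance bound into the claimed inequality
\[
 \Var(Q_{v_k,n_k}(f_u)) \le \Big(\sum_{z_u\in w\subseteq[z_u]}\gamma_w^{1/\tau}C_{b,1/\tau}^{|w|}\Big)^{\tau}\gamma_u^{-1}(n_k-1)^{-\tau}\|f_u\|^2_{k_u}.
\]

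The main obstacle is the CBC bookkeeping: one must restrict, at step $j$, the search criterion to subsets $w\subseteq[j]$ with $\max w=j$ so that the minimisation telescopes correctly and the final constant for $f_u$ depends only on subsets $w\subseteq[z_u]$ containing $z_u$. Verifying that this restricted criterion still produces the product bound $C_{b,1/\tau}^{|w|}$, and hence preserves the product structure of the constant $C_{u,k,\tau}$, is the technically most delicate part; the Walsh-coefficient estimates and the averaging step are essentially routine given \cite{BD10}.
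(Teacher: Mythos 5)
Your proposal follows essentially the same route as the paper's Appendix proof: condition (*) from the coordinate-wise independence and uniformity of Owen's scrambling, the variance bound via Walsh/gain-coefficient (dual-lattice) estimates as in \cite{BD10,DP09,YH05}, restriction of the quality criterion to subsets $w$ with $\max w = z_u$ (the paper's $\tilde{\bsgamma}^{(z_u)}$ device), and a CBC construction with a Jensen-type averaging argument at exponent $\lambda=1/\tau$, whose convergence requirement is precisely $1\le\tau<3$. The only cosmetic imprecision is that the CBC averaging yields a bound on the weighted sum $\sum_{z_u\in w\subseteq[z_u]}\gamma_w^{1/\tau}(\cdot)$ rather than a term-by-term bound for each $w$, but your final displayed inequality is exactly the paper's.
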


We remark that the scrambled polynomial lattice rules referred to in Theorem \ref{theoCBC} can be constructed using a modification of the component-by-component (CBC) algorithm from \cite{BD10}, see the Appendix.

\subsection{Results for multilevel algorithms} \label{subsecresultsmultilevelalgs}

In this subsection, we present results for the multilevel algorithms from Section \ref{secmultilevelalg} for the space of integrands $H(K)$ based on the unanchored
kernel $k$ discussed in Subsection \ref{subsecunachoredSobspace}. We rely on the scrambled polynomial lattice rules from Theorem \ref{theoCBC}. We remark that scrambled polynomial lattice rules consist of $n$ points, where $n$ is the power of a prime, see Theorem \ref{theoCBC}, and cannot be constructed for all $n \in \mathbb{N}$, as stated in the propositions and theorems of Section \ref{secmultilevelalg}. However, when required to construct a quadrature rule consisting of $n$ points, where $n \in \mathbb{N}$, we simply construct a scrambled polynomial lattice rule consisting of $b^M$ points, where $b^M \leq n < b^{M+1}$, and we set the quadrature weights corresponding to the superfluous points equal to zero.

\subsubsection{Finite-Intersection Weights}

We now present results for finite-inter\-sec\-tion weights distinguishing between variable and fixed subspace sampling.

\subsubsection*{Variable Subspace Sampling}

For finite-intersection weights and variable subspace sampling, we have the following result, which is essentially optimal in the case where the monotonicity condition (\ref{monoton}) holds and $\gamma_{\{1\}} >0$, see Corollary \ref{lowboundFIW} and Remark \ref{remmininterrsUnanchoredSobspace}. Let again $d$ be as in (\ref{eqdforfiniteintersectionweights})

\begin{corollary} \label{corrfiniteintersectionweightsvarsubspace}
Let $\$(\nu) = O(\nu^s)$ for $s>0$. Let $Q_n = Q_{[d], n}$ be the algorithm $Q_{v_1,n_1}$
from Theorem \ref{theoCBC} with $n_1=n$ and $v_1=[d]$.
Consider the multilevel algorithms $Q$ defined in (\ref{multilevel-algo}),
where the algorithms $Q_{v_k, \bsa}$ are as in Proposition \ref{Prop3.11}
Put $N:= \cost_{\var}(Q, B(K))$.
Then there exists for all $\delta >0$ a constant $C=C(d,\delta)$ such that
\begin{equation*}
e(Q,B(K))^2 \le C N^{-3+\delta}
\hspace{2ex}\text{if $\decay_{\bsgamma} \ge 1+3 s$,}
\end{equation*}
and
\begin{equation*}
e(Q, B(K))^2 \le C N^{-\frac{\decay_{\bsgamma}-1}{s} +\delta}
\hspace{2ex}\text{if $ 1+ 3 s > \decay_{\bsgamma} > 1 $.}
\end{equation*}
\end{corollary}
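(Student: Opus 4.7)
The plan is to apply Theorem \ref{Error-FIW} directly, using the scrambled polynomial lattice rules from Theorem \ref{theoCBC} as the base quadratures $Q_n = Q_{[d],n}$. The corollary is essentially a specialization, so the work reduces to verifying the hypotheses of Theorem \ref{Error-FIW} with the right value of $\alpha$, and then matching the quantifiers on $\delta$.

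First I would verify the assumptions of Proposition \ref{Prop3.11}. The rule (\ref{eqalgCBCSec52}) is an equal-weight quadrature $\tfrac{1}{n}\sum_{i=1}^n f(\bsx_i)$, so (\ref{summe=1}) is immediate, and condition (*) of Lemma \ref{ANOVA} for $\mathcal{V} = \{u\subseteq [d]\}$ is exactly what Theorem \ref{theoCBC} asserts. Finally, Theorem \ref{theoCBC} guarantees, for every $\tau \in [1,3)$ and every $u\subseteq [d]$,
\begin{displaymath}
\Var(Q_n(f_u)) \le C_{u,1,\tau}(n-1)^{-\tau}\|f_u\|_{k_u}^2,
\end{displaymath}
so (\ref{eqconditionquadrulefinintersecweights}) holds with $\alpha=\tau$ arbitrarily close to $3$ and $\beta=0$ (absorbing the factor $(n/(n-1))^\tau = O(1)$ into $C_d$). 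Proposition \ref{Prop3.11} then delivers randomized algorithms $Q_{v_k,\bsa}$ of the form (\ref{algobaustein}) satisfying the same variance bound on each level, which are the building blocks of the multilevel algorithm $Q$.

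Next, given $\delta>0$, I would apply Theorem \ref{Error-FIW} with $\alpha := 3-\varepsilon$ and internal tolerance $\delta' := \delta/2$, for a sufficiently small $\varepsilon = \varepsilon(\delta,s) > 0$. If $\decay_{\bsgamma}\ge 1+3s$, then \emph{a fortiori} $\decay_{\bsgamma}\ge 1+\alpha s$, so the first case of Theorem \ref{Error-FIW} applies and yields $e(Q,B(K))^2 \le C N^{-\alpha+\delta'} = C N^{-(3-\varepsilon)+\delta/2} \le C N^{-3+\delta}$ once $\varepsilon \le \delta/2$. If $1+3s > \decay_{\bsgamma} > 1$, then for $\varepsilon$ small enough we still have $1+\alpha s > \decay_{\bsgamma} > 1$, so the second case of Theorem \ref{Error-FIW} applies and gives $e(Q,B(K))^2 \le C N^{-(\decay_{\bsgamma}-1)/s + \delta'} \le C N^{-(\decay_{\bsgamma}-1)/s + \delta}$.

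There is essentially no mathematical obstacle here; everything hard has been done in Theorems \ref{Error-FIW} and \ref{theoCBC}. The only care needed is the bookkeeping of the two $\delta$'s and the choice of $\varepsilon$ so that the regime in Theorem \ref{Error-FIW} matches the regime in the corollary (in particular, handling the borderline $\decay_{\bsgamma} = 1+3s$, which is absorbed into the first case since one can always take $\alpha < 3$). A minor technical point is that Theorem \ref{theoCBC} only constructs rules with $n_k = b^{M_k}$ points; as noted in the discussion preceding Subsection 5.3.1, this is handled by padding with zero-weight points, which does not affect the variance bound and increases cost by at most a factor of $b^s$.
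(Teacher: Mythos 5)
Your proposal is correct and follows essentially the same route as the paper: verify that the scrambled polynomial lattice rules of Theorem \ref{theoCBC} satisfy the hypotheses of Proposition \ref{Prop3.11} (form \eqref{alg-form}, condition (*), the variance bound \eqref{eqconditionquadrulefinintersecweights} with $\alpha=\tau=3-\varepsilon$, $\beta=0$, and a uniform constant $C_d$ obtained by taking $\gamma_u=1$ for all $u\subseteq[d]$), and then invoke Theorem \ref{Error-FIW}. Your additional bookkeeping on the two tolerances and the padding to powers of $b$ is consistent with what the paper does implicitly.
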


\begin{proof} We need to verify that the conditions of Theorem \ref{Error-FIW} are satisfied. Scrambled polynomial lattice rules $Q_{[d],n}$ are of the form \eqref{alg-form}, satisfy condition (*) of Lemma \ref{ANOVA} for $\mathcal{V}:=\{u\subseteq [d]\}$, and \eqref{eqconditionquadrulefinintersecweights} for arbitrarily small $\varepsilon>0$,
$\alpha = 3-\varepsilon=:\tau$, $\beta =0$ and
\begin{displaymath}
C_d = \left( \sum_{\emptyset \neq w \subseteq [d]} C^{\vert w \vert}_{b, \frac{1}{\tau}} \right)^{\tau} \, ,
\end{displaymath}
where $C_{b, \lambda}$ is given by \eqref{eqCconstCBC} and we simply set $\gamma_u =1$ for all $u \subseteq [d]$ in Theorem \ref{theoCBC}.
\end{proof}

\subsubsection*{Fixed Subspace Sampling}

For fixed subspace sampling, we obtain from Theorem \ref{theofixedsubspacesampl}
the following result, which is essentially optimal according to Corollary \ref{lowboundFIW} and Remark \ref{remmininterrsUnanchoredSobspace}.

\begin{corollary} Let $\$(\nu) = O(\nu^s)$ for some $s>0$.
Let $Q_n=Q_{[d],n}$ be the algorithm $Q_{v_1, n_1}$
from Theorem \ref{theoCBC}, where $n_1=n$ and $v_1=[d]$. Let $Q=Q_{v_1, \bsa}$ be as in Proposition \ref{Prop3.11}.
Put $N:= \cost_{\var}(Q, B(K))$.
Then there exists for all $\delta >0$ a constant $C=C(d,\delta)$ such that
\begin{equation*}
e(Q,B(K))^2 \le C N^{-\frac{(3 - \delta)(\decay_{\bsgamma} - 1)}{(3 - \delta)s + \decay_{\bsgamma} -1}} \, .
\end{equation*}
\end{corollary}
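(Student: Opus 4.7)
The plan is to derive this corollary by specializing Theorem \ref{theofixedsubspacesampl} to the scrambled polynomial lattice rules of Theorem \ref{theoCBC}. Almost all of the heavy lifting has already been done: Theorem \ref{theofixedsubspacesampl} gives, for any admissible algorithm with variance decay rate $\alpha$ (coming from \eqref{eqconditionquadrulefinintersecweights}), an error bound of the shape
\begin{equation*}
e(Q,B(K))^2 \le C(d,\delta) N^{-\frac{(\alpha-\delta)(\decay_{\bsgamma}-1)}{(\alpha-\delta)s + \decay_{\bsgamma}-1}}.
\end{equation*}
So what is left is to feed in $\alpha = 3$ via Theorem \ref{theoCBC} and check that all structural hypotheses are met.

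First I would verify the conditions of Proposition \ref{Prop3.11} for the family $Q_n = Q_{[d],n}$ taken from Theorem \ref{theoCBC} with $v_1 = [d]$. The equal-weight scrambled lattice rule $\frac{1}{n}\sum_{i=1}^n f(\bsx_i)$ is of the form \eqref{alg-form}, satisfies $\sum_i w_i = 1$, and Theorem \ref{theoCBC} explicitly states that it satisfies condition (*) of Lemma \ref{ANOVA} for $\mathcal{V} = \{u \subseteq [d]\}$. Furthermore, for any $\tau \in [1,3)$ and any $u \subseteq [d]$, Theorem \ref{theoCBC} yields
\begin{equation*}
\Var(Q_{[d],n}(f_u)) \le C_{u,1,\tau}(n-1)^{-\tau}\|f_u\|_{k_u}^2
\end{equation*}
with $C_{u,1,\tau}$ bounded by a finite constant $C_d$ independent of $n$ (as in the proof of Corollary \ref{corrfiniteintersectionweightsvarsubspace}, one sets the unused weights $\gamma_u \equiv 1$ on subsets of $[d]$ inside Theorem \ref{theoCBC} to bound these constants uniformly). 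This yields \eqref{eqconditionquadrulefinintersecweights} with $\alpha = \tau$, $\beta = 0$.

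Next I would apply Theorem \ref{theofixedsubspacesampl} to the algorithm $Q = Q_{v_1,\bsa}$ obtained from $Q_{[d],n}$ via Proposition \ref{Prop3.11}. Choosing $\tau = 3 - \varepsilon$ for arbitrarily small $\varepsilon > 0$, Theorem \ref{theofixedsubspacesampl} yields, for any $\delta' > 0$,
\begin{equation*}
e(Q,B(K))^2 \le C(d,\delta',\varepsilon) N^{-\frac{(3-\varepsilon-\delta')(\decay_{\bsgamma}-1)}{(3-\varepsilon-\delta')s + \decay_{\bsgamma}-1}}.
\end{equation*}
Since the exponent is continuous in the parameter $3 - \varepsilon - \delta'$, we can combine $\varepsilon + \delta'$ into a single arbitrarily small parameter $\delta$, giving the bound stated in the corollary.

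I do not expect any serious obstacle: the only subtlety is bookkeeping, namely checking that the implicit constants from Theorem \ref{theoCBC} remain uniformly bounded when one reduces to the finite-dimensional setting $[d]$ with no weights on the subsets (so one is free to pick $\gamma_u = 1$ throughout), and that the two small parameters $\varepsilon$ (from choosing $\tau < 3$) and $\delta'$ (the free parameter in Theorem \ref{theofixedsubspacesampl}) can be merged into the single $\delta > 0$ in the statement. Once this is observed, the corollary follows immediately from the previously established machinery.
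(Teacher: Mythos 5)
Your proposal is correct and matches the paper's (essentially omitted) argument: the paper derives this corollary by direct appeal to Theorem \ref{theofixedsubspacesampl}, with the hypotheses verified exactly as in the proof of Corollary \ref{corrfiniteintersectionweightsvarsubspace} (Theorem \ref{theoCBC} with $\gamma_u\equiv 1$ on subsets of $[d]$ gives \eqref{eqconditionquadrulefinintersecweights} with $\alpha=\tau=3-\varepsilon$, $\beta=0$). Your observation that $\varepsilon$ and $\delta'$ merge cleanly into a single $\delta$ is right, since the exponent depends only on the combination $3-\varepsilon-\delta'$.
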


\subsubsection{Product Weights}

For product weights, we have the following results, where we again distinguish between variable and fixed subspace sampling.

\subsubsection*{Variable Subspace Sampling}

For variable subspace sampling we obtain the following result, which is essentially optimal for cost functions $ \$ (\nu) = O(\nu^s)$, that satisfy $s \ge \frac{2}{3}$. For $\frac{2}{3} > s >0$, the results are optimal for the regimes $\decay_{\bsgamma} \geq 3$ and $\frac{1}{1-s} \ge \decay_{\bsgamma} >1$, see Corollary \ref{lowboundFIW} and Remark \ref{remmininterrsUnanchoredSobspace}.

\begin{corollary} \label{theomultilevelprodweights} Let $\left( \gamma_{u} \right)_{u \in \mathcal{U}}$ be product weights and consider the algorithm
\begin{displaymath}
Q(f) = \sum^{m}_{k=1} Q_{v_k,\bsa}(f - \Psi_{k-1} f) \, ,
\end{displaymath}
where the $Q_{v_k,\bsa}$ are related to the scrambled polynomial lattice rules
$Q_{v_k,n_k}$ from Theorem \ref{theoCBC} via $Q_{v_k,\bsa} = Q_{v_k,n_k} \circ \Psi_k$.
Let $N$ be the cost of the algorithm corresponding to the cost function $ \$ (\nu) = O(\nu^s)$ for some $s > 0$. Then, for arbitrarily small $\delta>0$, we obtain for $s \ge \frac{2}{3}$,
\begin{eqnarray*}
e(Q, B(K))^2 &=& O( N^{- 3 + \delta}), \textrm{ if }
\decay_{\bsgamma} \geq 1+ 3 s \, ,
\\ e(Q, B(K))^2 &=& O( N^{- \frac{\decay_{\bsgamma} -1}{s} + \delta}), \textrm{ if } 1+ 3 s > \decay_{\bsgamma} > 1 \, ,
\end{eqnarray*}
and for $\frac{2}{3} > s > 0$,
\begin{eqnarray*}
e(Q, B(K))^2 &=& O( N^{- 3 + \delta}), \textrm{ if }
\decay_{\bsgamma} \geq 3  \, ,
\\ e(Q, B(K))^2 &=& O( N^{- \decay_{\bsgamma} + \delta}), \textrm{ if } 3  > \decay_{\bsgamma} > \frac{1}{1-s} \, ,
\\ e(Q, B(K))^2 &=& O( N^{- \frac{\decay_{\bsgamma} -1}{s} + \delta}), \textrm{ if } \frac{1}{1-s} \ge \decay_{\bsgamma} > 1 \, .
\end{eqnarray*}
\end{corollary}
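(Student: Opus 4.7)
The plan is to deduce the result by verifying the hypotheses of Theorem \ref{theoprodweightsml} with effective smoothness parameter $\alpha = 3$, applied to the scrambled polynomial lattice rules from Theorem \ref{theoCBC}. Since those rules are equal-weight ($w_i=1/n_k$, so $\sum_i w_i = 1$) and Theorem \ref{theoCBC} guarantees that condition (*) of Lemma \ref{ANOVA} holds for $\mathcal{V}=\{u\subseteq v_k\}$, the structural assumptions of Theorem \ref{theoprodweightsml} are immediate. The variance bound of Theorem \ref{theoCBC} yields (\ref{algo-annahme}) with $\tau := \min\{3,\decay_{\bsgamma}\} - \delta'$ for arbitrarily small $\delta'>0$, i.e.\ essentially $\alpha=3$.

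Next I would verify the two weight-growth conditions (\ref{eqdecayweight1}) and (\ref{eqdecayweight2}). The crucial point is to exploit the factorization of product weights $\gamma_u=\prod_{j\in u}\gamma_j$ inside the explicit constant
\begin{equation*}
C_{u,k,\tau}\gamma_u \;=\; \Bigl(\sum_{z_u\in w\subseteq [z_u]} \gamma_w^{1/\tau}\,C_{b,1/\tau}^{|w|}\Bigr)^{\tau}
\;=\; \Bigl(\gamma_{z_u}^{1/\tau}C_{b,1/\tau}\prod_{j=1}^{z_u-1}\bigl(1+\gamma_j^{1/\tau}C_{b,1/\tau}\bigr)\Bigr)^{\tau},
\end{equation*}
where $z_u=\max u$. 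For product weights one has $\gamma_j = O(j^{-\decay_{\bsgamma}+\varepsilon})$ for every $\varepsilon>0$, and since $\tau<\decay_{\bsgamma}$ the tail series $\sum_j \gamma_j^{1/\tau}$ converges, so the infinite product $\prod_{j\ge 1}\bigl(1+\gamma_j^{1/\tau}C_{b,1/\tau}\bigr)$ is uniformly bounded. For $u\subseteq v_{k-1}$ this gives $C_{u,k,\tau}\gamma_u = O(1)$, establishing (\ref{eqdecayweight2}). For $j\in V_k$ we have $z_{u_j}>L_{k-1}$, hence $\gamma_{z_{u_j}}\le \gamma_{L_{k-1}+1}=O(L_{k-1}^{-\decay_{\bsgamma}+\varepsilon})$, which after absorbing $\varepsilon$ into the slack between $\decay_{\bsgamma}$ and $p=\decay_{\bsgamma}-\delta s$ produces $C_{u_j,k,\tau}\gamma_{u_j}=O(L_{k-1}^{1-p})$, establishing (\ref{eqdecayweight1}).

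With both hypotheses verified, Theorem \ref{theoprodweightsml} (with $\alpha=3-\delta'$, $\frac{\alpha-1}{\alpha}=\frac{2}{3}$ up to vanishing terms) produces all five claimed regimes simultaneously, and the arbitrariness of $\delta'$ lets the final $\delta>0$ in the statement absorb every slack we introduced. The bookkeeping of $\delta,\delta',\varepsilon$ is routine once one notices that all are taken arbitrarily small and that $\tau$ can be chosen strictly between $1$ and $\min\{3,\decay_{\bsgamma}\}$, and the application of Theorem \ref{theoprodweightsml} is then purely mechanical.

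The main obstacle I anticipate is the algebraic manipulation of $C_{u,k,\tau}\gamma_u$ for product weights: one must recognize that the constraint "$z_u\in w$" lets the inner sum factor as $\gamma_{z_u}^{1/\tau}C_{b,1/\tau}\prod_{j<z_u}(1+\gamma_j^{1/\tau}C_{b,1/\tau})$, and that taking the $\tau$-th power pairs cleanly with the $L_{k-1}^{1-p}$ shape demanded by Theorem \ref{theoprodweightsml}. Everything else — the matching of exponents, the five regimes, and the reduction to $\alpha=3$ via Theorem \ref{theoCBC} — is essentially bookkeeping.
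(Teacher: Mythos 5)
Your proposal is correct and follows essentially the same route as the paper: both verify the hypotheses of Theorem \ref{theoprodweightsml} with $\alpha=3$ via the scrambled polynomial lattice rules of Theorem \ref{theoCBC}, and both check conditions (\ref{eqdecayweight1}) and (\ref{eqdecayweight2}) by factoring the sum $\sum_{z_u\in w\subseteq[z_u]}\gamma_w^{1/\tau}C_{b,1/\tau}^{|w|}$ for product weights into $\gamma_{z_u}^{1/\tau}C_{b,1/\tau}$ times a uniformly bounded product/sum over $[z_u-1]$, then using $z_{u_j}>L_{k-1}$ and the decay $\gamma_j=O(j^{-\decay_{\bsgamma}+\varepsilon})$. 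The bookkeeping of $\delta,\delta',\varepsilon$ and the five regimes is handled the same way in the paper.
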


Before proving Corollary \ref{theomultilevelprodweights}, we compare it to the results obtained in \cite{B10} and \cite{HMGNR10}, where the case $s=1$ was treated. So let $s=1$. In \cite{HMGNR10}, the rate of convergence $3-\delta$, $\delta$ arbitrarily small,
was achieved for $\decay_{\bsgamma} \geq 11$, see Corollary 4, and in \cite{B10} for $\decay_{\bsgamma} \geq 10$. Using our analysis, we achieve this rate for $\decay_{\bsgamma} \ge 4$, a result which is optimal, see Corollary \ref{lowboundFIW}
and Remark \ref{remmininterrsUnanchoredSobspace}, and thus cannot be improved further.
In the remaining regime $4> \decay_{\bsgamma}>1$, the result of Corollary \ref{theomultilevelprodweights} is again essentially optimal
and improves clearly on the results from \cite{B10} and \cite{HMGNR10}.

\begin{proof} We need to verify the conditions of Theorem \ref{theoprodweightsml}. Since we base the algorithms $Q_{v_k,\bsa}$ on the scrambled polynomial lattice rules from Theorem \ref{theoCBC}, we obtain \eqref{algo-annahme} for all $1 \leq \tau < 3$. We now confirm that the constants $C_{u,k,\tau}$ satisfy Equations \eqref{eqdecayweight1} and \eqref{eqdecayweight2} respectively. For $u\in v_k$, we have
\begin{eqnarray*}
 \gamma_{u} \left( \sum_{ z_{u} \in w \subseteq [z_{u}] } \gamma^{\frac{1}{\tau}}_w  C^{\vert w \vert}_{b, \frac{1}{\tau}} \right)^{\tau} \gamma^{-1}_{u}  &=&  \left( \sum_{ z_{u} \in w \subseteq [z_{u}]  } \gamma^{\frac{1}{\tau} }_w C^{\vert w \vert}_{b, \frac{1}{\tau}} \right)^{\tau}   \, .
\end{eqnarray*}
For $\tau \in (1, \decay_{\bsgamma} )$, since we deal with product weights, we obtain
for $j\in V_k$
\begin{displaymath}
\sum_{z_{u_j} \in w \subseteq [z_{u_j}]} \gamma^{\frac{1}{\tau}}_w C^{\vert w \vert}_{b, \frac{1}{\tau}} = \gamma^{\frac{1}{\tau}}_{z_{u_j}} C_{b, \frac{1}{\tau}} \sum_{w \subseteq [z_{u_j}-1]} \gamma^{\frac{1}{\tau}}_w C^{\vert w \vert}_{b, \frac{1}{\tau}} = O( \gamma^{\frac{1}{\tau}}_{z_{u_j}} ) \, ,
\end{displaymath}
and hence
$C_{u_j,k,\tau} \gamma_{u_j} = O \left( L^{-p}_{k-1} \right)$
for each $p< \decay_{\bsgamma}$. For $\emptyset \neq u \subseteq v_{k-1}$,
\begin{displaymath}
\left( \sum_{z_u \in w \subseteq [z_u]} \gamma^{\frac{1}{\tau}}_w C^{\vert w \vert}_{b, \frac{1}{\tau}} \right)^{\tau} \leq \left( \sum_{ w \in \mathcal{U} } \gamma^{\frac{1}{\tau}}_w C^{\vert w \vert}_{b, \frac{1}{\tau}} \right)^{\tau} < \infty \, ,
\end{displaymath}
so
$C_{u,k,\tau} \gamma_u = O \left( 1 \right)$
as required.
\end{proof}

\subsubsection*{Fixed Subspace Sampling}

We can combine Theorem 1 from \cite{HMGNR10} with scrambled polynomial lattice rules to recover Corollary 3.1 from \cite{B10}, which used Theorem 1 from \cite{HMGNR10} to improve on Corollary 1 from \cite{HMGNR10}.

\section*{Acknowledgments} Michael Gnewuch was partially supported by the German Science Foundation DFG under grant GN91-3/1 and by the Australian Research Council ARC.

\appendix

\section{Scrambled Polynomial Lattice Rules} \label{subsecdefandprelimpolys}

The quadrature rules employed in Section \ref{secexample} are based on scrambled polynomial lattice rules, which we now recall mostly relying on \cite{BD10}.

Polynomial lattice rules were introduced in \cite{N92a}, see also \cite{DKPS05,DP09,N92}. We recall that $b$ is a fixed prime and denote by $\zz_b$ the finite field containing $b$ elements and by $\zz_b((x^{-1}))$ the field of formal Laurent series over $\zz_b$. Elements of $\zz_b((x^{-1}))$ are formal Laurent series,
\begin{displaymath}
 L=\sum^{\infty}_{l=w} t_l x^{-l} \, ,
\end{displaymath}
where $w$ can be an arbitrary integer and all $t_l$ are in $\zz_b$. The field $\zz_b ((x^{-1}))$ contains the field of rational functions over $\zz_b$ as a subfield. Finally, the set of polynomials over $\zz_b$ is denoted by $\zz_b[x]$. For an integer $M$, we denote by $\nu_M$ the map from $\zz_b((x^{-1}))$ to $[0,1)$ defined by
\begin{displaymath}
 \nu_M \left( \sum^{\infty}_{l=w} t_l x^{-l} \right) = \sum^{M}_{l=\max(1,w)} t_l b^{-l} \, .
\end{displaymath}
The following definition of polynomial lattice rules stems from \cite{N92a}, see also \cite{DP09, N92}. Recall that a \emph{quasi-Monte Carlo rule} is a linear
quadrature rule whose quadrature weights are all equal and sum up to one.

\begin{definition} \label{defpolylatt} Let $b$ be prime and $M$ be an integer. For a given dimension $s \geq 1$, choose $p(x) \in \zz_b[x]$ with $deg(p(x))=M$ and $q_1(x), \dots, q_s(x) \in \zz_b[x]$. For $0 \leq h < b^M$ let $h=h_0 + h_1 b + \dots + h_{M-1} b^{M-1}$ be the $b$-adic expansion of $h$. With each such $h$ we associate the polynomial
\begin{displaymath}
 \overline{h}(x) = \sum^{M-1}_{r=0} h_r x^r \in \zz_{b}[x] \, .
\end{displaymath}
Then $S_{p,M}(\bsq)$, where $\bsq=(q_1,\dots,q_s)$, is the point set consisting of the $b^M$ points
\begin{displaymath}
\bsx_h = \left( \nu_M \left(\frac{ \overline{h}(x) q_1(x) }{p(x)} \right) , \dots, \nu_M \left( \frac{ \overline{h}(x) q_s(x) }{p(x)} \right) \right) \in [0,1)^s \, ,
\end{displaymath}
for $0 \leq h < b^M$. A quasi-Monte Carlo rule using the point set $S_{p,M}(\bsq)$ is called a polynomial lattice rule.
\end{definition}

Regarding notation, we write
$\bsh$ for vectors over $\zz$ or $\rr$. Polynomials over $\zz_b$ are denoted by $h(x)$ and vectors of polynomials by $\bsh(x)$.
Finally, we introduce the dual lattice which plays an important role in numerical integration, see \cite{DKPS05,DP09}, which requires us to introduce the following function: for a non-negative integer $k$ with $b$-adic expansion $k=k_0 + k_1 b + \dots $ we write ${\rm tr}_M(k) = k_0 + k_1 b + \dots + k_{M-1} b^{M-1} $ and thus the associated polynomial
\begin{displaymath}
 {\rm tr}_{M}(k)(x) = k_0 + k_1 x + \dots k_{M-1} x^{M-1} \in \zz_b[x]
\end{displaymath}
has degree $< M$. For a vector $\bsk \in \nn^s_0$, ${\rm tr}_{M} (\bsk)$ is defined componentwise.
We fix a polynomial $p(x) \in \zz_b[x]$ with $deg(p(x))=M$.

\begin{definition} \label{defdualpollatt} Let $\bsq(x)=(q_1(x), \dots,q_s(x)) \in \zz_b[x]^s$, then the dual polynomial lattice of $S_{p,M}( \bsq)$ is given by
\begin{eqnarray*}
\mathcal{D}  =  \mathcal{D}_{p}(\bsq)    =  \left\{ \bsk \in \nn^s_0:\; \sum^s_{j=1} {\rm tr}_{m}(k_j)(x)q_j(x) \equiv \bs0 \pmod{ p(x) } \right\}.
\end{eqnarray*}
\end{definition}
Also, we set $\mathcal{D}'=\mathcal{D} \setminus \left\{ \bs0 \right\}$ and use the notation $\mathcal{D}_p(\bsq_{u})$ to denote the dual lattice corresponding to the generating polynomials $q_j$, $j \in u$, and define $\mathcal{D}'(\bsq_u)$ analogously. The following function plays an important role in the analysis of polynomial lattice rules

\begin{displaymath}
r(l) = \left\{ \begin{array}{cc} 1 & \textrm{ if } l=0 \\  \frac{1}{3 b^{3 a}} & \textrm{ if } l >0 \end{array} \right. \, ,
\end{displaymath}
where $l=l_0 + l_1 b + \dots + l_a b^a$, $l_a \neq 0$, and for $\bsl_{u} \in \nn^{ \vert u \vert}_0$ we set $r_{\bsgamma} (\bsl_{u}) = \gamma_{u_h} \prod_{j \in u_h} r(l_j)$, where $u_h = \left\{ j \in u : l_j > 0  \right\}$.

We now recall \emph{Owen's scrambling algorithm} introduced in \cite{O95, O97}. The scrambling algorithm is best illustrated for a generic point $\bsx \in [0,1)^s$, where $\bsx=(x_1,\dots,x_s)$ and
\begin{displaymath}
 x_j = \frac{\xi_{j,1}}{b} + \frac{\xi_{j,2}}{b^2} + \dots .
\end{displaymath}
Then the scrambled point shall be denoted by $\bsy \in [0,1)^s$, where $\bsy=(y_1,\dots,y_s)$,
\begin{displaymath}
 y_j=\frac{\eta_{j,1}}{b} + \frac{\eta_{j,2}}{b^2} + \dots .
\end{displaymath}
The permutation applied to $\xi_{j,l}$, $j=1,\dots,s$, depends on $\xi_{j,k}$, for $1 \leq k <l$. In particular, $\eta_{j,1}=\pi_j(\xi_{j,1})$, $\eta_{j,2}=\pi_{j, \xi_{j,1}}(\xi_{j,2})$, $\eta_{j,3} =\pi_{j,\xi_{j,1}, \xi_{j,2}}(\xi_{j,3})$ and in general
\begin{displaymath}
 \eta_{j,k} = \pi_{j,\xi_{j,1} , \dots, \xi_{j,k-1} } (\xi_{j,k}) \, , \, k \geq 2 \, ,
\end{displaymath}
where $\pi_j$ and $\pi_{j, \xi_{j,1} , \dots , \xi_{j,k-1}}$, $k \geq 2$, are random permutations of $\left\{ 0, \dots, b-1 \right\}$. We assume that permutations with different indices are mutually independent. Using $\mathcal{P}$ to denote a point set in $[0,1)^s$ and $\mathcal{P}_{\pi}$ to denote the point set resulting from the application of Owen's scrambling algorithm to the points in $\mathcal{P}$, it is known from \cite{O95}, see Proposition 2, that each point in $\mathcal{P}_{\pi}$ is uniformly distributed in $[0,1)^s$. Using Owen's scrambling algorithm to randomize polynomial lattice rules, we are able to obtain the following estimate on the variance of a quadrature rule based on a scrambled polynomial lattice rule.

\begin{theorem} \label{corrvarestpolysprodweights} Let $b$ be prime, $M$ an integer and set $n=b^M$. Assume $s \in \mathbb{N}$ and that $\left( \gamma_{u} \right)_{u \in \mathcal{U}}$ are general weights. We set
\begin{equation} \label{eqscrpolylattrule}
Q_{[s],n} (f) = \frac{1}{n} \sum^n_{i=1} f(\bsx_i) \, ,
\end{equation}
where $\left\{ \bsx_i \right\}^{n}_{i=1} \in [0,1)^s$ is based on a scrambled polynomial lattice rule, and obtain for all $u \subseteq [s]$
\begin{equation} \label{eqVarboundscrpolylattrule}
\Var(Q_{[s],n}(f_{u})) \leq \sum_{\stackrel{\bsl_{u} \in \nn^{\vert u \vert}}{\bsl_{u} \in \mathcal{D}(\bsq_{u})}} r_{\bsgamma} (\bsl_{u}) \gamma^{-1}_u \| f_{u} \|^2_{k_{u}} \, .
\end{equation}
\end{theorem}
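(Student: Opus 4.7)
The plan is to combine three standard ingredients: (i) the Walsh series expansion of $f_u \in H_u$, (ii) Owen's variance formula for scrambled polynomial lattice rules, and (iii) a reproducing-kernel bound on the squared Walsh coefficients in terms of the $H_u$-norm.

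First I would expand $f_u$ in the $b$-adic Walsh basis on $[0,1)^{u}$:
\begin{equation*}
f_u(\bsx_u) = \sum_{\bsl_u \in \N_0^{|u|}} \hat{f}_u(\bsl_u) \wal_{\bsl_u}(\bsx_u).
\end{equation*}
By Remark \ref{YES-ANOVA!} (or equivalently by identity (\ref{anova-int})), $\int_0^1 f_u(\bsy_u)\,d y_j = 0$ for every $j \in u$, which says precisely that $\hat{f}_u(\bsl_u) = 0$ whenever some component $l_j$ with $j \in u$ equals zero. Hence only indices $\bsl_u \in \N^{|u|}$ contribute.

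Next I would invoke Owen's variance formula, adapted to polynomial lattice rules as in \cite{O95,O97,BD10}. Since Owen scrambling makes each $\bsx_i$ uniformly distributed in $[0,1)^{u}$ marginally, $Q_{[s],n}(f_u)$ is unbiased for $I(f_u) = 0$, and a direct computation of $\E[(Q_{[s],n}(f_u))^2]$ using the conditional permutation structure together with orthogonality of Walsh functions over the (unscrambled) polynomial lattice yields
\begin{equation*}
\Var(Q_{[s],n}(f_u)) = \sum_{\bsl_u \in \N^{|u|} \cap \mathcal{D}(\bsq_u)} |\hat{f}_u(\bsl_u)|^2,
\end{equation*}
the dual lattice $\mathcal{D}(\bsq_u)$ entering because these are exactly the Walsh indices whose corresponding function fails to integrate to zero over the unscrambled lattice, while for $\bsl_u \notin \mathcal{D}(\bsq_u)$ the Owen-scrambled contribution vanishes after taking expectations over the random permutations.

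The last step is to bound $|\hat{f}_u(\bsl_u)|^2$ by $\gamma_u^{-1} r_{\bsgamma}(\bsl_u) \|f_u\|^2_{k_u}$. Writing $\hat{f}_u(\bsl_u) = \langle f_u, \psi_{\bsl_u}\rangle_{L^2}$ where $\psi_{\bsl_u}$ is the corresponding Walsh function, and using the reproducing property in $H_u$ together with the Cauchy--Schwarz inequality, one obtains $|\hat{f}_u(\bsl_u)|^2 \le \|f_u\|^2_{k_u}\, \|\Phi_{\bsl_u}\|^2_{k_u}$, where $\Phi_{\bsl_u}$ is the Riesz representer in $H_u$ of the functional $f_u \mapsto \hat{f}_u(\bsl_u)$. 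Since $k_u$ tensorizes as $k_u(\bsx,\bsy) = \prod_{j\in u} k(x_j,y_j)$, a direct calculation of the Walsh coefficients of the univariate unanchored kernel $k$ shows $\|\Phi_{\bsl_u}\|^2_{k_u} = \prod_{j \in u} r(l_j)$ for $\bsl_u \in \N^{|u|}$, and multiplying and dividing by $\gamma_u$ this equals $\gamma_u^{-1} r_{\bsgamma}(\bsl_u)$ since $u_h = u$ when all $l_j \ge 1$.

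Combining the three steps and summing over $\bsl_u \in \N^{|u|} \cap \mathcal{D}(\bsq_u)$ yields (\ref{eqVarboundscrpolylattrule}). The main technical obstacle is Step 2: deriving the Walsh-expansion variance formula for scrambled polynomial lattice rules and establishing that the surviving indices are exactly those in the dual lattice. This requires the careful bookkeeping of conditional expectations of products of two Walsh functions under Owen's digit-permutation scheme, but it follows closely the analysis already carried out in \cite{BD10} for the finite-dimensional product-weight case, and only has to be re-expressed for the single ANOVA term $f_u$ with general weights.
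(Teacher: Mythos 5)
Your overall architecture (Walsh expansion, a variance formula for the scrambled rule, a reproducing‑kernel bound on the Walsh data) is the right family of ideas, and Step 1 is fine. The decisive error is in Step 2: the identity $\Var(Q_{[s],n}(f_u)) = \sum_{\bsl_u\in\N^{|u|}\cap\mathcal{D}(\bsq_u)}|\hat f_u(\bsl_u)|^2$ is the mean‑square error formula for a \emph{digitally shifted} polynomial lattice rule, not for an Owen‑scrambled one. Under Owen's nested permutations the scrambled points $\bsy_i,\bsy_j$ with $i\neq j$ are correlated in a way that couples \emph{distinct} Walsh indices lying in the same resolution block, so $\E\bigl[\wal_{\bsk}(\bsy_i)\overline{\wal_{\bsk'}(\bsy_j)}\bigr]$ need not vanish for $\bsk\neq\bsk'$; the correct statement is Owen's gain‑coefficient formula $\Var = \sum_{\bsl}\Gamma_{\bsl}\,\sigma^2_{\bsl}(f)$, where $\sigma^2_{\bsl}(f)$ sums $|\hat f(\bsk)|^2$ over the resolution block determined by $\bsl$ and $\Gamma_{\bsl}$ is obtained by counting dual‑net points in that block — this is exactly the route through \cite[Cor.~13.7]{DP09} that the paper indicates. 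Your formula is neither an equality nor a valid upper bound: already for $s=1$, $n=b$ the scrambled rule is a stratified sample with $\Var=\tfrac1b\sum_{k\ge b}|\hat f(k)|^2$, whereas your right‑hand side is $\sum_{k\ge b,\ b\mid k}|\hat f(k)|^2$; a function whose only nonzero Walsh coefficient sits at $k=b+1$ makes your expression zero while the true variance is positive.

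The error propagates into Step 3, where the claimed identity $\|\Phi_{\bsl_u}\|^2_{k_u}=\prod_j \tfrac{1}{3b^{3a_j}}$ cannot hold. The representer of the single Walsh‑coefficient functional in the unanchored Sobolev space has $\|\Phi_l\|^2_{k}=\Theta(b^{-2a})$ for $b^{a}\le l<b^{a+1}$ — this $b^{-2a}$ decay is precisely what yields the $O(n^{-2+\delta})$ squared worst‑case error of unscrambled or shifted polynomial lattice rules. The third power of $b^{-a}$ in $r(l)=\tfrac{1}{3b^{3a}}$, i.e.\ the ``scrambling gain'' responsible for the rate $n^{-3+\delta}$ in Theorem \ref{theoCBC}, only emerges from the block bookkeeping: by the analysis of \cite[Lemma~6]{YH05} the block variance $\sigma^2_{\bsl}(f)$, a sum of about $(b-1)b^{a}$ squared coefficients per coordinate, is itself $O(b^{-2a})\|f_u\|^2_{k_u}$, and the gain coefficient charges this block sum only once per dual‑lattice point, which is what distributes an effective $b^{-3a}$ to each $\bsl_u\in\mathcal{D}(\bsq_u)$. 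Consequently, even if Step 3 were corrected your argument would only deliver $\Var\le\sum_{\bsl_u\in\mathcal{D}'(\bsq_u)}\prod_j c\,b^{-2a_j}\,\|f_u\|^2_{k_u}$, a rate‑$n^{-2}$ bound too weak for the theorem. The proof has to pass through the gain‑coefficient/block decomposition, as in the paper's pointer to \cite{DP09} and \cite{YH05}.
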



Theorem \ref{corrvarestpolysprodweights} can be verified by recalling that polynomial lattice rules are digital nets, see e.g. \cite{DP09, N92}, and using the proof approach of \cite[Corollary 13.7]{DP09}. The coefficients $\sigma^2_{(\bsl_u , \bszero)} (f)$ appearing in that proof can be bounded in terms of the norm $\| \cdot \|_{k_u}$ with the help of the analysis in the proof of \cite[Lemma 6]{YH05}.

For $u\subseteq v_k$ we can use the bound
\begin{eqnarray} \label{eqqualitycrit1}
\sum_{ \stackrel{\bsl_u \in \mathbb{N}^u}{\bsl_u \in \mathcal{D}(\bsq_u)}} r_{\bsgamma}(\bsl_u) &\leq& \sum_{z_u \in w \subseteq [z_u]} \sum_{ \stackrel{\bsl_w \in \mathbb{N}^w}{\bsl_w \in \mathcal{D}(\bsq_w)} } r_{\bsgamma}(\bsl_w)=: B(\bsq, [z_u], \tilde{\bsgamma}^{(z_u)} ) \, , 
\end{eqnarray}
where
\begin{displaymath}
\tilde{\gamma}^{(z_u)}_w = 0 \textrm{ for } w \subseteq [z_u -1] \textrm{ and } \tilde{\gamma}^{(z_u)}_w = \gamma_w \textrm{ for } z_u \in w \subseteq [z_u] \, ,
\end{displaymath}
where $z_u = \max u$, $u \subset \mathbb{N}$.
%

\section{Constructing Polynomial Lattice Rules for the Unanchored Sobolev Space} \label{subsecconstrpolylatt}

The aim of this section is twofold: Firstly, we would like to discuss how to implement the multilevel algorithm from Section \ref{secexample} in practice, and secondly we would like to establish Theorem \ref{theoCBC}. The construction of the scrambled polynomial lattice rules underlying the algorithm from Section \ref{secmultilevelalg} is based on the component-by-component (CBC) construction from \cite{BD10}. In \cite{BD10}, the construction was presented in the context of a Walsh function space and product weights, whereas we are going to present the results for the unanchored Sobolev space from Subsection \ref{subsecunachoredSobspace} and general weights.

We will illustrate how to construct the scrambled polynomial lattice rule underlying the algorithm $Q_{v_k, \bsa}$, see Equation \eqref{algobaustein}. To do so, we proceed as follows: We note that the sets $ ( v_k)_{k \in \mathbb{N}}$ from Section \ref{secmultilevelalg} satisfy $v_1 \subseteq v_2 \subseteq v_3 \subseteq \dots$, hence we firstly construct a scrambled polynomial lattice rule corresponding to the set $v_1$, i.e. construct a point set in $[0,1)^{v_1}$. Subsequently, we extend this point set to a scrambled polynomial lattice rule corresponding to the set $v_2$, i.e. construct points in $[0,1)^{v_2}$. Next we extend this point set to a scrambled polynomial lattice rule corresponding to the set $v_3$, i.e. construct points in $[0,1)^{v_3}$, etc..
Hence we need to present two algorithms; the first algorithm, \emph{CBC 1}, shows how to construct scrambled polynomial lattice rules corresponding to $v_1$ in $[0,1)^{v_1}$, the next algorithm, \emph{CBC 2}, shows how to extend a scrambled polynomial lattice rule corresponding to $v_{k-1}$ in $[0,1)^{v_{k-1}}$ to a scrambled polynomial lattice rule corresponding to $v_k$ in $[0,1)^{v_k}$, for $k=2,3, \dots$. Clearly it suffices to show how to construct the polynomial lattice rule corresponding to $v_1$ and then how to extend it to a polynomial lattice rule corresponding to $v_2$. Without loss of generality, we assume that $v_1 = [s_1]$ and $v_2 = [s_2]$, where $s_1, s_2 \in \mathbb{N}$, and $s_1 < s_2$. We now discuss how to construct a scrambled polynomial lattice rule in $[0,1)^{s_1}$ using the CBC construction. Intuitively, the CBC construction chooses the polynomials $q_1, \dots, q_{s_1}$ in a greedy fashion: The first polynomial $q_1$ is chosen so that a given quality criterion is minimized. The resulting polynomial is then fixed, say $q^*_1$, and consequently the second polynomial $q_2$ is chosen so that the quality criterion is minimized. The resulting polynomial, say $q^*_2$, is now fixed and we continue this procedure. We note that the quality criterion plays a crucial role for the CBC construction, and we use $B( \bsq, [z_u], \tilde{\bsgamma}^{(z_u)} )$ from \eqref{eqqualitycrit1} as quality criterion to construct the scrambled polynomial lattice rule in $[0,1)^{z_u}$. This criterion is closely related to the bound on the variance from Equation \eqref{eqVarboundscrpolylattrule}, and we have for all $u \subseteq [s_1]$
\begin{equation} \label{eqVarboundscrpolylattrule2}
\Var(Q_{[s_1],n} (f_u)) \leq B(\bsq, [z_u], \tilde{\bsgamma}^{(z_u)}) \gamma^{-1}_u \| f_u \|^2_{k_u}
\hspace{3ex}\text{for all $f_u \in H_u$.}
\end{equation}
To be useful for the CBC 1 algorithm, we need the quality criterion $B(\bsq, [z_u], \tilde{\bsgamma}^{(z_u)})$ to be computable. The following theorem provides an explicit formula, see \cite[Lemma~1]{BD10}.

\begin{theorem} \label{theocompB} Let $b$ be prime, $M$ an integer and set $n=b^M$. Then the following equality holds for $\bsq = (q_1, \dots, q_{[z_u]})$ and the point set $S_{p,M}( \bsq) = \{\bsx_i\}_{i=1}^{b^M}$ for $u \subseteq [s_1]$
\begin{equation} \label{eqdefB}
B(\bsq , [z_u], \tilde{\bsgamma}^{(z_u)} ) = \frac{1}{n} \sum^n_{i=1} \sum_{z_u \in w \subseteq [z_u]} \gamma_w \prod_{j \in w} \left( \frac{\phi ( x_{i,j} )}{3} \right) \, ,
\end{equation}
where
\begin{displaymath}
\phi(x) = \left\{ \begin{array}{ll} \frac{b^2}{b+1} & \textrm{ for } x=0 \\ \frac{b^2(1-b^{-2 (a_0 -1)})}{(b+1)} - b^{-2 a_0} b^2 & \textrm{ for } \xi_i=0 \, , \, i=1, \dots , a_0-1 \, , \, \xi_{a_0} \neq 0 \, , \, a_0 \geq 1 \, ,  \end{array} \right.
\end{displaymath}
where $x=\frac{\xi_1}{b} + \frac{\xi_2}{b^2} + \dots$.
\end{theorem}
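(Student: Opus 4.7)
My plan is to prove the identity by expanding $\phi/3$ as a Walsh-Fourier series and invoking the classical character-sum identity for polynomial lattice rules. Write $\wal_l$ for the $l$-th Walsh function in base $b$. Since $\{\bsx_1,\ldots,\bsx_n\} = S_{p,M}(\bsq)$ is a digital net, for every $\bsk_w\in\N_0^{|w|}$
\begin{equation}\label{eqcharsum}
\frac{1}{n}\sum_{i=1}^{n}\prod_{j\in w}\wal_{k_j}(x_{i,j}) \;=\; \begin{cases} 1, & \bsk_w\in\mathcal{D}(\bsq_w),\\ 0, & \text{otherwise};\end{cases}
\end{equation}
this is classical for polynomial lattice rules, see e.g.~\cite{DP09}.

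The key analytic step is the Walsh expansion
\begin{equation}\label{eqwalshphi}
\frac{\phi(x)}{3} \;=\; \sum_{l=1}^{\infty} r(l)\,\wal_l(x), \qquad x\in[0,1).
\end{equation}
Since $r(l)=\frac{1}{3b^{3a}}$ is constant on each block $l\in[b^a,b^{a+1})$, the right-hand side splits as $\sum_{a\ge 0}\frac{1}{3b^{3a}}S_a(x)$, where $S_a(x):=\sum_{l\in[b^a,b^{a+1})}\wal_l(x)$. Factoring $\wal_l$ across the base-$b$ digits of $x$ and applying character orthogonality digit by digit yields
\begin{equation*}
S_a(x) \;=\; \begin{cases} (b-1)\,b^a, & a_0(x) > a+1,\\ -b^a, & a_0(x) = a+1,\\ 0, & a_0(x)\le a,\end{cases}
\end{equation*}
where $a_0(x)$ denotes the position of the first nonzero base-$b$ digit of $x$ (with $a_0(0)=\infty$). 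Summing the finite geometric series over $a\in\{0,\ldots,a_0(x)-2\}$ in the first regime and adding the single contribution from $a=a_0(x)-1$ in the second regime reproduces, after simplification, exactly the piecewise formula defining $\phi(x)/3$; a quick check at $x=0$ gives $\sum_{a\ge 0}(b-1)b^a/(3b^{3a})=b^2/(3(b+1))=\phi(0)/3$.

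With \eqref{eqwalshphi} in hand, fix $w\subseteq[z_u]$. The series $\sum_{l\ge 1}|r(l)|=\frac{b-1}{3}\sum_{a\ge 0}b^{-2a}$ converges, so absolute convergence justifies expanding the product and interchanging the infinite sum with the finite average:
\begin{equation*}
\frac{1}{n}\sum_{i=1}^{n}\prod_{j\in w}\frac{\phi(x_{i,j})}{3}
= \sum_{\bsl_w\in\N^{|w|}}\Bigl(\prod_{j\in w}r(l_j)\Bigr)\cdot\frac{1}{n}\sum_{i=1}^{n}\prod_{j\in w}\wal_{l_j}(x_{i,j}).
\end{equation*}
Applying \eqref{eqcharsum} collapses the inner average to the indicator of $\bsl_w\in\mathcal{D}(\bsq_w)$, so the right-hand side equals $\sum_{\bsl_w\in \N^{|w|}\cap\mathcal{D}(\bsq_w)}\prod_{j\in w}r(l_j)$. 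Multiplying by $\gamma_w$ and summing over $z_u\in w\subseteq[z_u]$ reproduces the defining expression for $B(\bsq,[z_u],\tilde{\bsgamma}^{(z_u)})$ in \eqref{eqqualitycrit1}, completing the proof.

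The main obstacle is establishing \eqref{eqwalshphi}: the block-wise character-sum evaluation of $S_a(x)$ and the subsequent telescoping of the geometric series into the explicit piecewise formula for $\phi$ is the only point where the precise shape of $\phi$ enters, and it requires careful bookkeeping of the base-$b$ digit structure. All other ingredients --- the character-sum identity for digital nets, the interchange of summation justified by absolute convergence, and the final reassembly by $w$ --- are routine.
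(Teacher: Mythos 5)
Your proof is correct: the Walsh-series expansion $\phi(x)/3=\sum_{l\ge 1}r(l)\,\mathrm{wal}_l(x)$ (which you verify blockwise, and which checks out, e.g.\ $\phi(0)/3=b^2/(3(b+1))$), combined with the character-sum/dual-lattice identity for the deterministic point set $S_{p,M}(\bsq)$ and an absolutely convergent interchange of sums, is exactly the standard argument behind this formula. The paper itself gives no proof and simply cites Lemma~1 of \cite{BD10}, whose derivation proceeds along the same lines as yours.
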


We now briefly recall the CBC algorithm from \cite{BD10}, but immediately present it for general weights. The generating polynomials of the polynomial lattice rule are chosen from the following set
\begin{displaymath}
R_{b,M}:= \left\{ q \in \mathbb{Z}_b[x] : \deg(q) < M \textrm{ and } q \neq 0 \right\} \, ,
\end{displaymath}
so $\vert R_{b,M} \vert = b^M-1$. The CBC algorithm constructs a $\bsq \in R^{s_1}_{b,M}$ so that $B(\bsq , [e], \tilde{\bsgamma}^{(e)})$, $e=1,\dots,s_1$, converges at a rate of $O(n^{-3 + \delta})$, for any $\delta>0$.

\begin{algorithm}[h!] \label{algcbc}
\caption{CBC 1 algorithm}
\begin{algorithmic}[1]\label{algcbc1}
\REQUIRE $b$ a prime,  $s_1,M \in \nn$, an irreducible polynomial $p \in \zz_b[x]$ with $\deg(p)=M$, and weights $\left( \gamma_u \right)_{u \subseteq [s_1]}$. \STATE Set $q_1=1$. \FOR{$e=2$ to
$s_1$} \STATE find $q_e \in R_{b,M}$ by minimizing $B((q_1,\dots,q_e),[e], \tilde{\bsgamma}^{(e)})$ as a function of $q_e$. \ENDFOR \RETURN
$\bsq=(q_1,\dots,q_{s_1})$.
\end{algorithmic}
\end{algorithm}
The next theorem is the analogue of Theorem 1 in \cite{BD10}, but immediately presented for general weights.

\begin{theorem} \label{thepcbcgenweights} Let $(\gamma_u)_{u \in \mathcal{U}}$ be general weights. Assume that the vector $\bsq = (q_1, \dots, q_{s_1})$ is obtained using the CBC 1 algorithm, see Algorithm \ref{algcbc1}. Then we have for all $1 \leq \tau < 3$, $u \subseteq [s_1]$,
\begin{displaymath}
B( \bsq , [z_u] , \tilde{\bsgamma}^{(z_u)} ) \leq (b^M-1)^{- \tau} \left( \sum_{ z_u \in w  \subseteq [z_u] } \gamma^{\frac{1}{\tau}}_w C^{\vert w \vert}_{b, \frac{1}{\tau} } \right)^{\tau} \, ,
\end{displaymath}
where $C_{b,\lambda}$ is given by \eqref{eqCconstCBC} and $z_u = \max u$.
\end{theorem}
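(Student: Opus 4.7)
The plan is to establish, by induction on the dimension $e$, the strengthened claim that after the CBC~1 algorithm has selected $q_1,\ldots,q_e$, one has for every $1\leq\tau<3$ the bound
$$
B((q_1,\ldots,q_e),[e],\tilde{\bsgamma}^{(e)})^{1/\tau} \leq (b^M-1)^{-1}\sum_{e\in w\subseteq[e]}\gamma_w^{1/\tau}\,C_{b,1/\tau}^{|w|}.
$$
Raising this to the $\tau$-th power and specialising at $e=z_u$ immediately yields the theorem, since $\tilde{\bsgamma}^{(z_u)}$ is supported precisely on subsets $w$ with $z_u\in w\subseteq[z_u]$. The base case $e=1$ is handled by direct computation: $q_1=1$ is forced by the algorithm, only $w=\{1\}$ contributes, the dual-lattice condition ${\rm tr}_M(l_1)(x)\equiv 0\pmod{p(x)}$ reduces (via $\deg{\rm tr}_M(l_1)<M=\deg p$) to $l_1\in b^M\mathbb{N}$, and the resulting geometric sum of $r(\cdot)^{1/\tau}$ is compared directly with the second expression in the maximum defining $C_{b,1/\tau}$.

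For the inductive step, fix $q_1,\ldots,q_{e-1}$ from earlier CBC iterations and let $q_e^*$ be the minimiser chosen at step $e$. Since $x\mapsto x^{1/\tau}$ is monotone, the standard min-versus-average trick gives
$$
B(\bsq_e^*,[e],\tilde{\bsgamma}^{(e)})^{1/\tau} \leq \frac{1}{|R_{b,M}|}\sum_{q_e\in R_{b,M}} B(\bsq_e,[e],\tilde{\bsgamma}^{(e)})^{1/\tau}.
$$
Using the decomposition \eqref{eqqualitycrit1} together with the subadditivity of $x\mapsto x^{1/\tau}$ on $[0,\infty)$ (valid since $1/\tau\leq 1$), applied first to the outer sum over $w$ and then to the inner sum over $\bsl_w$, I obtain
$$
B(\bsq_e,[e],\tilde{\bsgamma}^{(e)})^{1/\tau} \leq \sum_{e\in w\subseteq[e]}\;\sum_{\bsl_w\in\mathcal{D}(\bsq_w)\cap\mathbb{N}^w}r_{\bsgamma}(\bsl_w)^{1/\tau}.
$$
Interchanging the average over $q_e$ with these sums, each fixed $\bsl_w$ with $e\in w$ contributes a factor equal to the fraction of $q_e\in R_{b,M}$ satisfying the single affine equation $\sum_{j\in w}{\rm tr}_M(l_j)(x)\,q_j(x)\equiv 0\pmod{p(x)}$; since $p$ is irreducible, this fraction equals $(b^M-1)^{-1}$ when ${\rm tr}_M(l_e)(x)\not\equiv 0\pmod{p(x)}$, and essentially $1$ when $l_e\in b^M\mathbb{N}$.

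Consequently the averaged bound splits into a \emph{generic} part carrying prefactor $(b^M-1)^{-1}$ and summing $r_{\bsgamma}(\bsl_w)^{1/\tau}$ over all $\bsl_w\in\mathbb{N}^w$, together with a \emph{heavy} part summing over those $\bsl_w$ with $l_e\in b^M\mathbb{N}$. Both pieces factorise coordinatewise into geometric sums of the form $\sum_{l\geq 1}r(l)^{1/\tau}$ or $\sum_{l\in b^M\mathbb{N}}r(l)^{1/\tau}$, each of which converges precisely because $\tau<3$ (equivalently $3/\tau>1$). These two sums correspond to the two arguments of the maximum defining $C_{b,1/\tau}$ in \eqref{eqCconstCBC}, so each coordinate contributes a factor at most $C_{b,1/\tau}$; pulling out $\gamma_w^{1/\tau}$ and the $|w|$ resulting factors gives the required summand-wise bound $(b^M-1)^{-1}\gamma_w^{1/\tau}C_{b,1/\tau}^{|w|}$. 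The main obstacle is the book-keeping in this last step: one must absorb the heavy contribution (where $l_e\in b^M\mathbb{N}$) into the $C_{b,1/\tau}^{|w|}$ factor rather than picking up an additional additive term, and verify that the two geometric sums appearing here match exactly the two expressions inside the maximum in \eqref{eqCconstCBC}, which is what forces the particular form of the constant $C_{b,\lambda}$.
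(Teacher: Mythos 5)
The paper itself offers no proof of this theorem: it is stated as the general-weights analogue of Theorem~1 of the cited reference [BD10], and your outline follows exactly the standard route taken there --- the minimum-versus-average trick over $q_e\in R_{b,M}$, subadditivity of $x\mapsto x^{1/\tau}$ (valid since $1/\tau\le 1$), the case split on whether ${\rm tr}_{M}(l_e)(x)\equiv 0\pmod{p(x)}$ (equivalently $b^M\mid l_e$, using $\deg {\rm tr}_{M}(l_e)<M$ and irreducibility of $p$), and geometric sums converging precisely for $\tau<3$. Two points, though. First, your induction does not actually propagate: the heavy part of the averaged bound involves $\sum_{\bsl_{w\setminus\{e\}}\in\mathcal{D}(\bsq_{w\setminus\{e\}})\cap\N^{w\setminus\{e\}}}\prod_{j}r(l_j)^{1/\tau}$, and this is \emph{not} controlled by the inductive hypothesis on $B(\bsq_{[e-1]},\cdot,\cdot)^{1/\tau}$, because Jensen only gives $B^{1/\tau}\le\sum r_{\bsgamma}^{1/\tau}$ and not the reverse. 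The correct move --- which your ``factorise coordinatewise'' step implicitly performs --- is to drop the dual-lattice constraint on the coordinates in $w\setminus\{e\}$ altogether and bound by the unrestricted sum over $\N^{w\setminus\{e\}}$; once that is done each step $e$ is self-contained and no induction is needed (note that $B(\bsq,[z_u],\tilde{\bsgamma}^{(z_u)})$ is exactly the criterion minimised at step $e=z_u$ of CBC~1, so min-versus-average applies there verbatim).

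Second, the step you defer is where the content of the theorem actually lies. With the trivial coordinatewise bounds one obtains, per set $w\ni e$, a generic contribution at most $(b^M-1)^{-1}\gamma_w^{1/\tau}A_1^{|w|}$ and a heavy contribution at most $b^{-3M/\tau}A_1\cdot\gamma_w^{1/\tau}A_1^{|w|-1}$, where $A_1=\sum_{l\ge1}r(l)^{1/\tau}=\tfrac{b-1}{3^{1/\tau}}\tfrac{b^{3/\tau-1}}{b^{3/\tau-1}-1}$ is the first argument of the maximum in \eqref{eqCconstCBC}; adding these naively gives $2(b^M-1)^{-1}\gamma_w^{1/\tau}A_1^{|w|}$, which overshoots the asserted constant. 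Showing that everything can be absorbed into $C_{b,1/\tau}^{|w|}$ with $C_{b,1/\tau}=\max\bigl(A_1,(\sum_{l\ge1}r(l))^{1/\tau}\bigr)$ and prefactor exactly $(b^M-1)^{-1}$ is the delicate bookkeeping of [BD10, Thm.~1], and you have not carried it out. Relatedly, your base case is off: $\sum_{l\in b^M\N}r(l)^{1/\tau}=b^{-3M/\tau}A_1\le(b^M-1)^{-1}A_1$ is controlled by the \emph{first} expression in the maximum, not the second. So the skeleton is right and matches the (cited, not reproduced) source, but the constant-matching step that constitutes the theorem's precise claim remains an acknowledged gap.
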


We now discuss how to extend the vector $\bsq =(q_1, \dots, q_{s_1})$ from Theorem \ref{thepcbcgenweights} to a vector $(q_1, \dots, q_{s_2})$, where $s_2 > s_1$. Intuitively speaking, we employ the polynomials $q_1, \dots, q_{s_1}$ constructed via the CBC 1 algorithm, and simply continue the CBC search, now constructing polynomials $q_{s_1+1}, \dots, q_{s_2}$. This is formalized in Algorithm \ref{algcbc21}, the \emph{CBC 2} algorithm.

\begin{algorithm}[h!] \label{algcbc2}
\caption{CBC 2 algorithm}
\begin{algorithmic}[1]\label{algcbc21}
\REQUIRE $b$ a prime, $s_1,s_2, M \in \nn$,  where $s_1 < s_2$, an irreducible polynomial $p \in \zz_b[x]$ with $\deg(p)=M$,  weights $\left( \gamma_u \right)_{u \subseteq [s_2]}$ and polynomials $q_j$, $j=1, \dots, s_1$. \FOR{$e=s_1+1$ to
$s_2$} \STATE find $q_e \in R_{b,M}$ by minimizing $B((q_1,\dots,q_e), [e], \tilde{\bsgamma}^{(e)})$ as a function of $q_e$. \ENDFOR \RETURN
$\bsq=(q_1,\dots,q_{s_2})$.
\end{algorithmic}
\end{algorithm}
We get the following corollary to Theorem \ref{thepcbcgenweights}, which shows that Algorithm \ref{algcbc21} achieves the essentially optimal rate of convergence.
\begin{corollary} \label{theocbcgensets} Let $(\gamma_u)_{u \in \mathcal{U}}$ be general weights. Assume that the polynomials $q_1, \dots, q_{s_1}$ are given and that the polynomials $q_{s_1+1} , \dots, q_{s_2}$ are obtained via Algorithm \ref{algcbc21}. Then we have for all $1 \leq \tau < 3$ and $u \subseteq [s_2]$,
\begin{displaymath}
B (\bsq, [z_u], \tilde{\bsgamma}^{(z_u)} ) \leq \frac{1}{(b^M-1)^{\tau}} \left( \sum_{ z_u \in w \subseteq [z_u] } \gamma^{\frac{1}{\tau}}_w C^{\vert w \vert}_{b, \frac{1}{\tau}} \right)^{\tau} \, ,
\end{displaymath}
where $C_{b, \lambda}$ is defined as in \eqref{eqCconstCBC} and $z_u = \max u$.
\end{corollary}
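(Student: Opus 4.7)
The plan is to observe that the concatenated vector $\bsq=(q_1,\ldots,q_{s_2})$ produced by running Algorithm \ref{algcbc1} up to dimension $s_1$ and then Algorithm \ref{algcbc21} from $s_1+1$ to $s_2$ coincides with the vector that a single invocation of the CBC 1 algorithm with input $s_2$ would produce. Once this identification is made, the bound is a direct application of Theorem \ref{thepcbcgenweights} with $s_1$ replaced by $s_2$.

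The key locality observation to verify is that at step $e$ the polynomial $q_e$ chosen by CBC 1 depends only on the already-fixed polynomials $q_1,\ldots,q_{e-1}$ together with the quality criterion $B((q_1,\ldots,q_e),[e],\tilde{\bsgamma}^{(e)})$. By formula \eqref{eqdefB} this criterion involves only the weights $\gamma_w$ with $e\in w\subseteq[e]$ and the coordinates of the points indexed by $[e]$; in particular the decisions made by CBC 1 at steps $e\le s_1$ are oblivious to whether the algorithm will terminate at $s_1$ or continue on to $s_2$. Consequently, if we seed Algorithm \ref{algcbc21} with the output of CBC 1 run to $s_1$, the greedy minimization performed by CBC 2 for $e=s_1+1,\ldots,s_2$ is literally the same optimization that CBC 1 would carry out in those very steps.

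Applying Theorem \ref{thepcbcgenweights} (with $s_2$ in place of $s_1$) to the combined vector then yields the claimed bound uniformly in $u\subseteq[s_2]$, since the conclusion of that theorem holds for every $u\subseteq[s_2]$ and hence covers both the ``old'' range $z_u\le s_1$ and the ``new'' range $s_1<z_u\le s_2$. I expect no substantive obstacle here: the entire argument is a bookkeeping verification that the two algorithms agree, and no new analytic estimate — no fresh Jensen or Minkowski application, no dual-lattice averaging — is required beyond what already appears in the proof of Theorem \ref{thepcbcgenweights}. The only care needed is confirming from \eqref{eqdefB} that the step-$e$ quality criterion truly depends on $(q_1,\ldots,q_e)$ alone, so that extending the CBC search inductively from $s_1$ to $s_2$ preserves every previously established bound.
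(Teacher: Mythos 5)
Your argument is correct and matches the paper's (implicit) reasoning: the corollary is exactly the observation that the step-$e$ greedy minimization performed by CBC~2 for $e=s_1+1,\dots,s_2$ is the same minimization CBC~1 would perform at those steps, and that the step-$e$ criterion $B((q_1,\dots,q_e),[e],\tilde{\bsgamma}^{(e)})$ depends only on $(q_1,\dots,q_e)$, so Theorem~\ref{thepcbcgenweights} applies with $s_2$ in place of $s_1$ (with the understanding, needed for the range $z_u\le s_1$, that the ``given'' polynomials $q_1,\dots,q_{s_1}$ are themselves the CBC~1 output). The only phrasing to tighten is that the minimizer at each step need not be unique, so the combined run yields \emph{a} vector that CBC~1 could have produced rather than \emph{the} vector it would produce --- but since the bound of Theorem~\ref{thepcbcgenweights} holds for any sequence in which each $q_e$ minimizes the step-$e$ criterion, this does not affect the conclusion.
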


Due to Theorem \ref{thepcbcgenweights}, Corollary \ref{theocbcgensets}, and Equation \eqref{eqVarboundscrpolylattrule2}, our approach provides us with a scrambled polynomial lattice rule $\left\{ \bsx_i \right\}^{b^M}_{i=1} \in [0,1)^{v_k}$, such that the algorithm
\begin{displaymath}
Q_{v_k,n_k}(f) = \frac{1}{n_k} \sum^{n_k}_{i=1} f(\bsx_i) \,
\end{displaymath}
satisfies the claims made in Theorem \ref{theoCBC}.



\begin{thebibliography}{99}

\bibitem{Aro50}
   N. Aronszajn,
   {\sl Theory of reproducing kernels.}
   Trans. Amer. Math. Soc. 68 (1950), 337-404.

\bibitem{B10} J.~Baldeaux, {\sl Scrambled polynomial lattice rules for
infinite-dimensional integration}, to appear in:
L.~Plaskota, H.~Wo\'zniakowski (Eds.), Monte Carlo and
Quasi-Monte Carlo Methods 2010, Springer 2012.

\bibitem{BD10} J.~Baldeaux J.~Dick. {\sl A construction of Polynomial Lattice Rules with small gain coefficients}. Num. Math. 119 (2011), 271--297.



\bibitem{CMO97}
   R.~E.~Caflisch, W.~Morokoff, A.~B.~Owen.
   {\sl Valuation of mortgage backed securities using Brownian bridges
   to reduce effective dimension}. J.~Comp. Finance 1 (1997), 27--46.

\bibitem{CDMR09}
    J.~Creutzig, S.~Dereich, T.~M\"uller-Gronbach, K.~Ritter.
    {\sl Infinite-dimensional quadrature and approximation of
    distributions}. Found.~Comput. Math. 9 (2009), 391--429.


\bibitem{DG12}
    J.~Dick, M.~Gnewuch. {\sl Infinite-dimensional integration in weighted Hilbert spaces:
    anchored decompositions, deterministic algorithms, and higher order convergence}.
    Preprint 2012.

\bibitem{DKPS05} J.~Dick, F.~Kuo, F.~Pillichshammer, I.~Sloan. {\sl Construction algorithms for polynomial lattice rules for multivariate integration}. Math. Comp. 74 (2005), 1895--1921.


\bibitem{DP09} J.~Dick, and F.~Pillichshammer. \emph{Digital nets and sequences}. Cambridge University Press, Cambridge, 2010.


\bibitem{DSWW06} J. Dick, I. H. Sloan, X. Wang, H. Wo\'zniakowski. {\sl Good lattice rules in weighted Korobov spaces with general weights}. Numer. Math. 103 (2006), 63--97.


\bibitem{Gil08a}
   M.~B.~Giles. {\sl Multilevel Monte Carlo path simulation}.
   Oper.~Res.~56 (2008), 607--617.



 \bibitem{GiW09}
   M.~B.~Giles, B.~J.~Waterhouse. {\sl Multilevel quasi-Monte Carlo path simulation}.
   Radon Ser. Comput. Appl. Math.~8 (2009), 165--181.

\bibitem{G10} M.~Gnewuch. {\sl Infinite-dimensional Integration on
Weighted Hilbert Spaces}. Math. Comp.~81 (2012), 2175--2205.

\bibitem{Gne12} M.~Gnewuch. {\sl Lower error bounds for randomized multilevel
    and changing dimension algorithms.} Preprint 2012.

\bibitem{GMR12} M.~Gnewuch, S.~Mayer, K.~Ritter. {\sl On an orthogonal
decomposition in weighted Hilbert spaces of functions of infinitely many variables}.
In preparation.


\bibitem{GKS10} M.~Griebel, F.~Y.~Kuo, I.~H.~Sloan. {\sl The smoothing effect of the ANOVA decomposition}. J. Complexity 26 (2010), 523--551.

\bibitem{GKS11}
   M.~Griebel, F.~Y.~Kuo, I.~H.~Sloan. {\sl The smoothing effect of integration in
   $\R^d$ and the ANOVA decomposition}. Math. Comp., 2012. (DOI: http://dx.doi.org/10.1090/S0025-5718-2012-02578-6)


\bibitem{Hei98}
   S.~Heinrich. {\sl Monte Carlo complexity of global solution of
   integral equations}. J.~Complexity 14 (1998), 151-175.



\bibitem{HMGNR10} F.~J.~Hickernell, T.~M\"uller-Gronbach, B.~Niu, K.~Ritter. {\sl Multi-level Monte Carlo Algorithms for Infinite-Dimensional Integration on $\mathbb{R}^{\mathbb{N}}$}. J. Complexity 26 (2010), 229--254.

\bibitem{HW01} F.~J.~Hickernell, X.~Wang. {\sl The error bounds and tractability
of quasi-Monte Carlo algorithms in infinite dimension}. Math. Comp. 71 (2001), 1641--1661.


\bibitem{KSWW10a}
   F.~Y.~Kuo, I.~H.~Sloan, G.~W.~Wasilkowski, H.~Wo\'zniakowski.
   {\sl Liberating the dimension}. J.~Complexity 26 (2010), 422--454.

\bibitem{KSWW10b}
   F.~Y.~Kuo, I.~H.~Sloan, G.~W.~Wasilkowski, H.~Wo\'zniakowski.
   {\sl On decompositions of multivariate functions}. Math. Comp. 79
   (2010), 953--966.

\bibitem{LO06}
   R.~Liu, A.~B.~Owen. {\sl Estimating mean dimensionality of analysis
   of variance decompositions}. J.~Amer. Statist. Assoc. 101 (2006),
   712--720.



\bibitem{N92} H.~Niederreiter. {\sl Random number generation and quasi-Monte Carlo
methods}, CBMS-NSF Regional Conference Series in Applied
Mathematics, Vol. 63, Society for Industrial and Applied Mathematics
(SIAM), Philadelphia, PA, 1992.

\bibitem{N92a} H.~Niederreiter. {\sl Low-discrepancy point sets obtained by digital constructions over finite fields}. Czech. Math. J. 42 (1992), 143--166.



\bibitem{NHMR11}
   B.~Niu, F.~J.~Hickernell, T.~M\"uller-Gronbach, K.~Ritter.
   {\sl Deterministic multi-level algorithms for infinite-dimensional
    integration on $\R^\N$}.
   J.~Complexity 27 (2011), 331--351.

\bibitem{N88} E.~Novak. {\sl Deterministic and Stochastic Error Bounds in Numerical Analysis}, Lect. Notes in Math. 1349. Springer-Verlag, Berlin, 1988.

\bibitem{NW08}
   E.~Novak, H.~Wo\'zniakowski. {\sl Tractability of Multivariate
   Problems}. Volume I, European Mathematical Society,
   Z\"urich, 2008.

\bibitem{NW10}
   E.~Novak, H.~Wo\'zniakowski. {\sl Tractability of Multivariate
   Problems}. Volume II, European Mathematical Society,
   Z\"urich, 2010.

\bibitem{O95} A.~B.~Owen. {\sl Randomly permuted $(t,m,s)$-nets and $(t,s)$-sequences}. In: H.~Niederreiter, and P.~J.-S.~Shiue (eds.), \emph{Monte Carlo and Quasi-Monte Carlo Methods in Scientific Computing}, 299--317, Springer, New York, 1995.
\bibitem{O97} A.~B.~Owen. {\sl Monte Carlo variance of scrambled equidistribution quadrature}. SIAM J. Numer. Anal. 34 (1997), 1884--1910.
\bibitem{PT95}
    S.~H. Paskov, J.~F. Traub.
    {\sl Faster valuation of financial derivatives}.
    J.~Portfolio Management (1995), 113--120.

\bibitem{PW11}
   L.~Plaskota, G.~W.~Wasilkowski.
   \emph{Tractability of infinite-dimensional integration in the
   worst case and randomized setting}. J.~Complexity 27 (2011), 505--518.



\bibitem{SW98} I. H. Sloan, H. Wo\'zniakowski. {\sl When are quasi-Monte Carlo
algorithms efficient for high dimensional integrals?}. J. Complexity 14 (1998),
1--33.


\bibitem{TWW88} J.~F.~Traub, G.~W.~Wasilkowski, H.~Wo\'zniakowski.
{\sl Information-Based Complexity}, Academic Press, New York, 1988.

\bibitem{TW98} J.~F.~Traub, A.~G.~Werschulz. {\sl Complexity and Information},
Lezioni Lincee, Cambridge University Press, Cambridge, 1998.

\bibitem{WW96} G.~W.~Wasilkowski, H.~Wo\'zniakowski. {\sl On tractability of path
integration}. J.~Math. Physics 37 (1996), 2071--2088.

\bibitem{YH05} R.-X.~ Yue, and F.~J.~Hickernell. {\sl Strong tractability of integration using scrambled Niederreiter points}. Math. Comp. 74 (2005), 1871--1893.

\end{thebibliography}
\end{document}